\newcommand\A{\mathrm{A}}  \newcommand\AGaL{\mathrm{A\Gamma L}}  \newcommand\Aut{\mathrm{Aut}}
 \newcommand\bbF{\mathbb{F}} \newcommand\bfO{\mathbf{O}}
\newcommand\C{\mathrm{C}}  \newcommand\calC{\mathcal{C}} \newcommand\calS{\mathcal{S}}  \newcommand\Cen{\mathbf{C}}  
\newcommand\D{\mathrm{D}}
\newcommand\G{\mathrm{G}}   \newcommand\GaL{\mathrm{\Gamma L}}
  \newcommand\GL{\mathrm{GL}}
\newcommand\M{\mathrm{M}} \newcommand\magma{{\sc Magma} }
\newcommand\Nor{\mathbf{N}}
\newcommand\Out{\mathrm{Out}}
  \newcommand\PGL{\mathrm{PGL}} \newcommand\PGaL{\mathrm{P\Gamma L}}
  \newcommand\POm{\mathrm{P\Omega}} \newcommand\ppd{\mathrm{ppd}} \newcommand\PSL{\mathrm{PSL}}   \newcommand\PSO{\mathrm{PSO}} \newcommand\PSp{\mathrm{PSp}} \newcommand\PSU{\mathrm{PSU}}
 \newcommand\Rad{\mathrm{Rad}}  
 \newcommand\SL{\mathrm{SL}}  \newcommand\Soc{\mathrm{Soc}} \newcommand\Sp{\mathrm{Sp}}    \newcommand\Sy{\mathrm{S}}  
\newcommand\Z{\mathbf{Z}} 
\newtheorem{theorem}{Theorem}[section]
\newtheorem{lemma}[theorem]{Lemma}
\newtheorem{proposition}[theorem]{Proposition}
\newtheorem{corollary}[theorem]{Corollary}
\theoremstyle{definition}
\newtheorem{definition}[theorem]{Definition}
\newtheorem{question}[theorem]{Question}
\newtheorem{hypothesis}[theorem]{Hypothesis}
\begin{document}

\title{Vertex-primitive $s$-arc-transitive digraphs of linear groups}

\author[Giudici]{Michael Giudici}
\address{Department of Mathematics and Statistics\\The University of Western Australia\\Crawley 6009, WA\\Australia}
\email{michael.giudici@uwa.edu.au}

\author[Li]{Cai Heng Li}
\address{Department of Mathematics, Southern University of Science and Technology\\Shenzhen 518055, Guangdong\\P. R. China}
\email{lich@sustc.edu.cn}

\author[Xia]{Binzhou Xia}
\address{School of Mathematics and Statistics\\The University of Melbourne\\Parkville, VIC 3010\\Australia}
\email{binzhoux@unimelb.edu.au}

\maketitle

\begin{abstract}
We study $G$-vertex-primitive and $(G,s)$-arc-transitive digraphs for almost simple groups $G$ with socle $\mathrm{PSL}_n(q)$. We prove that $s\leqslant2$ for such digraphs, which provides the first step in determining an upper bound on $s$ for all the vertex-primitive $s$-arc-transitive digraphs.

\textit{Key words:} digraphs; vertex-primitive; $s$-arc-transitive; linear groups

\textit{MSC2010:} 05C20, 05C25
\end{abstract}

\section{Introduction}

A \emph{digraph} $\Gamma$ is a pair $(V,\rightarrow)$ with a set $V$ (of vertices) and an antisymmetric irreflexive binary relation $\rightarrow$ on $V$. For a non-negative integer $s$, an \emph{$s$-arc} of $\Gamma$ is a sequence $v_0,v_1,\dots,v_s$ of vertices with $v_i\rightarrow v_{i+1}$ for each $i=0,\dots,s-1$. A $1$-arc is also simply called an \emph{arc}. For a subgroup $G$ of $\Aut(\Gamma)$, we say $\Gamma$ is \emph{$(G,s)$-arc-transitive} if $G$ acts transitively on the set of $s$-arcs of $\Gamma$. An $(\Aut(\Gamma),s)$-arc-transitive digraph $\Gamma$ is said to be \emph{$s$-arc-transitive}. Note that a vertex-transitive $(s+1)$-arc-transitive digraph is necessarily $s$-arc-transitive.  A transitive permutation group $G$ on a set $\Omega$ is said to be \emph{primitive} if $G$ does not preserve any nontrivial partition of $\Omega$. For a subgroup $G$ of $\Aut(\Gamma)$, we say $\Gamma$ is \emph{$G$-vertex-primitive} if $G$ is primitive on the vertex set. An $\Aut(\Gamma)$-vertex-primitive digraph $\Gamma$ is said to be \emph{vertex-primitive}. All digraphs and groups considered in this paper will be finite.

It appears that vertex-primitive $s$-arc-transitive digraphs with large $s$ are very rare. Indeed, the existence of vertex-primitive $2$-arc-transitive digraphs besides directed cycles was only recently determined~\cite{GLX2017} and no vertex-primitive $3$-arc-transitive examples are known. In~\cite{GX2018} the authors asked the following question:

\begin{question}\label{qes1}
Is there an upper bound on $s$ for vertex-primitive $s$-arc-transitive digraphs that are not directed cycles?
\end{question}

A group $G$ is said to be \emph{almost simple} if $G$ has a unique minimal normal subgroup $T$ and $T$ is a nonabelian simple group. These are precisely the groups lying between a nonabelian simple group $T$ and its automorphism group $\Aut(T)$. A systematic investigation of the O'Nan-Scott types of primitive groups has reduced Question~\ref{qes1} to almost simple groups by showing that an upper bound on $s$ for vertex-primitive $s$-arc-transitive digraphs $\Gamma$ with $\Aut(\Gamma)$ almost simple will be an upper bound on $s$ for all vertex-primitive $s$-arc-transitive digraphs~\cite[Corollary~1.6]{GX2018}. This paper provides the first step in determining such an upper bound by studying vertex-primitive $s$-arc-transitive digraphs whose automorphism group is an almost simple linear group. Our main result is as follows.

\begin{theorem}\label{thm1}
Let $\Gamma$ be a $G$-vertex-primitive $(G,s)$-arc-transitive digraph, where $G$ is almost simple with socle $\PSL_n(q)$. Then $s\leqslant2$.
\end{theorem}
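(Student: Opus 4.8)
The plan is to suppose that $s\geqslant3$ and to derive a contradiction. Since $G$ is vertex-transitive, a $(G,s)$-arc-transitive digraph with $s\geqslant3$ is in particular $(G,3)$-arc-transitive, so it suffices to treat the case $s=3$. Let $H=G_v$ be a vertex stabilizer; as $G$ is vertex-primitive, $H$ is a core-free maximal subgroup of $G$. If the out-valency of $\Gamma$ were $1$ then, since the in-valency equals the out-valency, $\Gamma$ would be a disjoint union of directed cycles, hence (a vertex-primitive digraph with an arc is connected) a single directed cycle, whose automorphism group is cyclic and so contains no almost simple subgroup; thus the out-valency $k$ of $\Gamma$ satisfies $k\geqslant2$. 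Writing $\Gamma=\Cos(G,H,HgH)$ for a suitable $g\in G$ and setting $H_i=H\cap H^g\cap\dots\cap H^{g^i}$, the sequence $(H,Hg,Hg^2,Hg^3)$ is a $3$-arc, the pointwise stabilizer of its first $i+1$ vertices being $H_i$, and since for a $G$-arc-transitive digraph $(G,s)$-arc-transitivity is equivalent to $G_{v_0\dots v_i}$ acting transitively on the out-neighbourhood $\Gamma^+(v_i)$ for each $i<s$, the $(G,3)$-arc-transitivity of $\Gamma$ is equivalent to the chain condition $[H:H_1]=[H_1:H_2]=[H_2:H_3]=k$. In particular $k$ divides $|H_1|=|H|/k$ and $|H_2|=|H|/k^2$, so $k^3$ divides $|H|$; hence $k\leqslant|H|^{1/3}$ and $R:=H\cap H^g$ satisfies $|R|\geqslant|H|^{2/3}$.

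It therefore suffices to rule out, for each maximal subgroup $H$ of $G$, the existence of an element $g$ with $H^g\neq H$ such that $R=H\cap H^g$ has index $k$ in $H$ with $2\leqslant k\leqslant|H|^{1/3}$, $R$ transitive on $\Gamma^+(v_1)$, and $R\cap H^{g^2}=H_2$ transitive on $\Gamma^+(v_2)$. I would go through the possibilities for $H$ supplied by Aschbacher's theorem for $T=\PSL_n(q)$, according to whether $H$ lies in one of the geometric classes $\calC_1,\dots,\calC_8$ or is of type $\calS$ (almost simple). The cleanest case is $\calC_1$ when $G$ does not contain the duality automorphism: then $H$ stabilizes an $m$-subspace, $V(\Gamma)$ is the set of $m$-subspaces of the natural module, and because $\PSL_n(q)\leqslant G$ is already transitive on ordered pairs of distinct $m$-subspaces with prescribed intersection dimension (the corresponding pair in $\GL_n(q)$ has a stabilizer meeting $\SL_n(q)$ in every determinant coset, since $\dim(U\cap W)<m$), the arc relation of $\Gamma$ would be self-paired --- impossible for a digraph, so there is no such $\Gamma$ at all. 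The remaining $\calC_1$ cases, in which $H$ stabilizes a duality-stable flag of type $\{m,n-m\}$, need a more careful analysis of the $G$-action on such flags. For the other geometric classes and for $\calS$, the guiding observation is that $R$ is an abnormally large intersection (index at most $|H|^{1/3}$) of two distinct $G$-conjugates of a maximal subgroup of $G$; the pairs of maximal subgroups of $\PSL_n(q)$ admitting such a large intersection are very restricted, and in each surviving configuration one checks directly that a chain $H>H_1>H_2>H_3$ with all three indices equal to $k\geqslant2$ and with the two transitivity conditions cannot occur --- typically $H_1$, of index only $k$ in $H$, cannot act transitively of degree $k$ on $\Gamma^+(v_1)$ given the structure of $R$, or the geometry forces $k$ and $\Gamma$ to be completely determined and then fails at the next step. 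Finitely many configurations with small $n$ or small $q$, and the $\calS$-subgroups of bounded order, are completed by computer calculation.

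I expect the main obstacle to be the $\calC_1$ case when the duality automorphism is present, together with the classes $\calC_2$ (imprimitive) and $\calC_3$ (field-extension), where $|H|$ is of the same order of magnitude as $|T|$ so that order estimates alone do not suffice: there one must control the $G$-action on pairs of flags, on pairs of subspace decompositions, or on pairs of $\GL_{n/r}(q^r)$-structures precisely enough to show that $3$-arc-transitivity would force the relevant relation to be self-paired or otherwise to degenerate. A secondary difficulty is the book-keeping required to be sure that every one of the finitely many exceptional configurations --- small rank, small field, and the sporadic $\calS$-type stabilizers --- has genuinely been eliminated, which demands either uniform arguments running down each infinite family or explicit computation.
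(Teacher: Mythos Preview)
Your high-level strategy---assume $s\geqslant3$, reduce to $s=3$, then run through Aschbacher's classes for the maximal subgroup $H=G_v$---is exactly the architecture of the paper, and your treatment of the $\calC_1$ case when $G\leqslant\PGaL_n(q)$ (constructing an element swapping $W$ and $W^g$, so the arc-relation is self-paired) is essentially the paper's argument for that sub-case.

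Where your plan diverges is in the main technical engine. You emphasise the numerical bound $k^3\mid|H|$ and the resulting ``large intersection'' $|R|\geqslant|H|^{2/3}$, proposing to classify pairs of conjugate maximal subgroups with such a large intersection. The paper does not use this at all. Its central tool is the observation that the transitivity condition you wrote down, $R=G_{uv}$ transitive on $\Gamma^+(v_1)$, is equivalent to a \emph{factorization} $G_v=G_{uv}G_{vw}$, and crucially that the two factors $G_{uv}$ and $G_{vw}=G_{uv}^g$ are \emph{conjugate in $G$}, hence isomorphic. This ``homogeneous factorization'' forces $\pi(G_{uv})=\pi(G_v)$, and for almost simple $G_v$ the paper classifies all homogeneous factorizations completely (only four families survive, none of which iterates). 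This disposes of the $\calC_9$ class in a few lines, and the same device---combined with the known factorizations of almost simple groups from Liebeck--Praeger--Saxl---drives the analysis of every other class. For $\calC_3$--$\calC_6$ the paper in fact shows no $(G,2)$-arc-transitive example exists at all; the factorization constraint is doing work your index bound cannot.

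The gap in your plan is that no off-the-shelf classification of ``conjugate maximal subgroups with intersection of index $\leqslant|H|^{1/3}$'' exists, and producing one would be at least as hard as the paper's route. More seriously, the index bound alone is too weak: for $\calC_2$-subgroups with many direct factors, or for $\calC_1$ parabolic-type stabilizers, there are plenty of large intersections of conjugates, and ruling them out requires exactly the structural information (isomorphism of the two factors, their common composition factors, their projections to the base group of a wreath product) that the homogeneous-factorization viewpoint supplies. I would recast your transitivity conditions explicitly as factorizations $G_v=G_{uv}G_{vw}$ and $G_{uv}=G_{uvw}G_{uvw}^{g^{-1}}$ with conjugate factors, and then exploit the isomorphism $G_{uv}\cong G_{vw}$ systematically.
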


We remark that an infinite family of $G$-vertex-primitive $(G,2)$-arc-transitive digraphs with $G=\PSL_3(p^2)$ for each prime $p>3$ such that $p\equiv\pm2\pmod{5}$ was constructed in~\cite{GLX2017}. These digraphs have vertex stabilizer $\A_6$ and arc-stabilizer $\A_5$, and are the only known examples of $G$-vertex-primitive $(G,2)$-arc-transitive digraphs such that $G$ is almost simple. A complete classification of $G$-vertex-primitive $(G,2)$-arc-transitive digraphs for almost simple groups $G$, even for those with $\Soc(G)=\PSL_n(q)$, seems out of reach at this stage, though would be achievable for small values of $n$.

Note that if $\Soc(G)=\PSL_n(q)$ then either $G\leqslant\PGaL_n(q)$ or $G$ has an index $2$ subgroup contained in $\PGaL_n(q)$ and $G$ contains an element that acts on the projective space associated with $G$ by interchanging the set of $1$-spaces and the set of hyperplanes. For any $G$-vertex-primitive $(G,s)$-arc-transitive digraph $\Gamma$, the vertex stabilizer $G_v$ for any vertex $v$ of $\Gamma$ is maximal in $G$. We prove Theorem~\ref{thm1} by analyzing the maximal subgroups of $G$ according to the classes provided by Aschbacher's theorem~\cite{Aschbacher1984}. The classes $\calC_1$, $\calC_2$,\ldots, $\calC_8$ are discussed in Sections~\ref{sec1}--\ref{sec2}, while the remaining class $\calC_9$ is dealt with in Section~\ref{sec3}.
We actually prove that there is no $G$-vertex-primitive $(G,2)$-arc-transitive digraph with $G_v$ from classes $\calC_3$,\ldots,$\calC_6$ (Theorem~\ref{thm3}) though the possibility for an example with $G_v$ from classes $\calC_1$, $\calC_2$, $\calC_7$ or $\calC_8$ remains open. The examples in \cite{GLX2017} have $G_v$ from the class $\calC_9$. At the end of Section~\ref{sec2} we give a proof of Theorem~\ref{thm1}.

\section{Preliminaries}\label{sec5}

\subsection{Notation}

For a group $X$, denote by $\Soc(X)$ the socle of $X$ (that is, the product of all minimal normal subgroups of $X$), $F(G)$ the Fitting subgroup of $G$, $\Rad(X)$ the largest soluble normal subgroup of $X$, and $X^{(\infty)}$ the smallest normal subgroup of $X$ such that $X/X^{(\infty)}$ is soluble.

For a group $X$ and a prime $p$, denote by $\bfO_p(X)$ the largest normal $p$-subgroup of $X$, and $\Omega_p(X)$ the subgroup of $X$ generated by the elements of order $p$ in $X$.

For any integer $n$ and prime number $p$, denote by $n_p$ the $p$-part of $n$ (that is, the largest power of $p$ dividing $n$) and $\pi(n)$ the set of prime divisors of $n$. If $X$ is a group, then $\pi(X):=\pi(|X|)$. The following result is a consequence of the so-called Legendre's formula, which we will use repeatedly in this paper.

\begin{lemma}\label{Factorial}
For any positive integer $n$ and prime $p$ we have $(n!)_p<p^{n/(p-1)}$.
\end{lemma}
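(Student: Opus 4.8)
The plan is to deduce this immediately from Legendre's formula. Writing $e:=v_p(n!)$ for the exponent of $p$ in the prime factorization of $n!$, so that $(n!)_p=p^e$, Legendre's formula states that
\[
e=\sum_{i=1}^{\infty}\left\lfloor\frac{n}{p^i}\right\rfloor,
\]
where the sum has only finitely many nonzero terms, namely those with $p^i\leqslant n$. (If one wants to include a proof of the formula itself, the standard argument is to count, for each $i\geqslant1$, the integers in $\{1,\dots,n\}$ divisible by $p^i$, of which there are exactly $\lfloor n/p^i\rfloor$, and to observe that an integer divisible by exactly $p^j$ is counted precisely $j$ times.)

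The next step is to bound $e$ from above. Since $\lfloor x\rfloor\leqslant x$ for every real $x$, we get $e\leqslant\sum_{i=1}^{\infty}n/p^i=n/(p-1)$, the last equality being the sum of a geometric series. This already yields $(n!)_p\leqslant p^{n/(p-1)}$. To obtain the strict inequality claimed, I would note that there is at least one index $i$ with $p^i>n$, and for such $i$ one has $\lfloor n/p^i\rfloor=0<n/p^i$; hence in fact $e<n/(p-1)$. As $p>1$, the function $x\mapsto p^x$ is strictly increasing, so $(n!)_p=p^e<p^{n/(p-1)}$, as required.

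There is essentially no obstacle here: the statement is a one-line consequence of Legendre's formula. The only subtlety worth flagging is the passage from the non-strict bound $e\leqslant n/(p-1)$ to the strict bound $(n!)_p<p^{n/(p-1)}$, which relies on the fact that Legendre's sum terminates whereas the geometric series does not — equivalently, that $\lfloor n/p^i\rfloor=0<n/p^i$ for all sufficiently large $i$.
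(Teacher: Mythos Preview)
Your proof is correct and follows exactly the approach indicated in the paper, which does not give a detailed argument but simply states that the lemma is a consequence of Legendre's formula. Your derivation of the strict inequality from the termination of Legendre's sum is the standard one and fills in precisely what the paper omits.
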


Given integers $a\geqslant2$ and $m\geqslant2$, a prime number $r$ is called a \emph{primitive prime divisor} of the pair $(a,m)$ if $r$ divides $a^m-1$ but does not divide $a^i-1$ for any positive integer $i<m$. By an elegant theorem of Zsigmondy (see for example~\cite[Theorem IX.8.3]{Blackburn1982}), $(a,m)$ always has a primitive prime divisor except when $(a,m)=(2,6)$ or $a+1$ is a power of $2$ and $m=2$. Denote the set of primitive prime divisors of $(a,m)$ by $\ppd(a,m)$ if $(a,m)\neq(2,6)$, and set $\ppd(2,6)=\{7\}$. Note that for each $r\in\ppd(a,m)$ Fermat's Little Theorem implies that $r\equiv1\pmod{m}$ and so $r>m$.

\subsection{Group factorizations}

An expression of a group $G$ as the product of two subgroups $H$ and $K$ of $G$ is called a \emph{factorization} of $G$, where $H$ and $K$ are called \emph{factors}. The following lemma lists several equivalent conditions for a group factorization, whose proof is fairly easy and so is omitted.

\begin{lemma}\label{Factorization}
Let $H$ and $K$ be subgroups of $G$. Then the following are equivalent:
\begin{itemize}
\item[(a)] $G=HK$.
\item[(b)] $G=KH$.
\item[(c)] $G=(x^{-1}Hx)(y^{-1}Ky)$ for any $x,y\in G$.
\item[(d)] $|H\cap K||G|=|H||K|$.
\item[(e)] $H$ acts transitively by right multiplication on the set of right cosets of $K$ in $G$.
\item[(f)] $K$ acts transitively by right multiplication on the set of right cosets of $H$ in $G$.
\end{itemize}
\end{lemma}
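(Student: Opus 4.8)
The plan is to prove that each of (b)--(f) is equivalent to (a), using the symmetry between $H$ and $K$ to halve the work. The equivalence (a)$\iff$(b) is immediate on taking inverses: since $H$ and $K$ are subgroups, $(HK)^{-1}=K^{-1}H^{-1}=KH$, and as inversion is a bijection of $G$ with $G^{-1}=G$, the equality $G=HK$ holds exactly when $G=KH$. For (a)$\iff$(d) I would count the fibres of the multiplication map $H\times K\to HK$, $(h,k)\mapsto hk$: if $hk=h'k'$ then $h'^{-1}h=k'k^{-1}\in H\cap K$, and conversely $(hd,d^{-1}k)$ has product $hk$ for every $d\in H\cap K$, so each fibre has size exactly $|H\cap K|$ and hence $|HK|\,|H\cap K|=|H|\,|K|$. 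Since $HK\subseteq G$, we have $G=HK$ if and only if $|HK|=|G|$, which rearranges to condition (d).

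For (a)$\iff$(c) the key point is the identity
\[
(x^{-1}Hx)(y^{-1}Ky)=x^{-1}\big(H(xy^{-1})K\big)y,
\]
so $(x^{-1}Hx)(y^{-1}Ky)=G$ if and only if $HzK=G$ where $z=xy^{-1}$; since $z$ runs over all of $G$ as $x,y$ do, statement (c) amounts to requiring $HzK=G$ for every $z\in G$. Taking $x=y=1$ gives (c)$\Rightarrow$(a), and conversely if $G=HK$ and $z=hk$ with $h\in H$, $k\in K$, then $HzK=HhkK=HK=G$ because $Hh=H$ and $kK=K$; thus (a)$\Rightarrow$(c).

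Finally, for (a)$\iff$(e) --- and for (b)$\iff$(f), which follows by interchanging the roles of $H$ and $K$ --- I would note that $(Kg)\cdot h:=Kgh$ defines a well-defined action of $H$ on the set of right cosets of $K$ in $G$ (well-definedness since $Kg_1=Kg_2$ forces $Kg_1h=Kg_2h$), and that this action is transitive precisely when the orbit of the coset $K=K1$ is the whole set, i.e.\ when $\{Kh:h\in H\}$ exhausts the right cosets; this last condition says $G=\bigcup_{h\in H}Kh=KH$, which is (b), already known to be equivalent to (a). I expect no genuine obstacle in any of this: the lemma is elementary, and the only steps that need a moment's care are the well-definedness of the coset action in (e) and the unwinding of the conjugation identity in (c).
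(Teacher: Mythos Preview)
Your argument is correct and complete; each equivalence is handled by the standard elementary argument, and the only slightly delicate points (the fibre count for (d), the conjugation identity for (c), and the well-definedness of the coset action for (e)) are all treated properly. The paper itself omits the proof entirely, remarking only that it ``is fairly easy and so is omitted,'' so there is no alternative approach to compare against; what you have written is exactly the routine verification the authors had in mind.
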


We give some lemmas below concerning factorizations of almost simple groups, which are not only needed later but also of interest in their own right.

\begin{lemma}\label{lem1}
Suppose $G=\A_n$ or $\Sy_n$ acts naturally on a set $\Omega$ of size $n\geqslant2$ and $G=HK$ with subgroups $H$ and $K$ of $G$. Then at least one of $H$ or $K$ is transitive on $\Omega$.
\end{lemma}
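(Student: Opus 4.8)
The plan is to argue by contradiction: suppose that $G=HK$ but neither $H$ nor $K$ is transitive on $\Omega$. First I would translate the factorization into the numerical language of Lemma~\ref{Factorization}(d), so that $|G|=|H||K|/|H\cap K|$; equivalently, writing things in terms of indices, $|G:H\cap K|=|G:H|\cdot|G:K|$. The key structural observation is that if $H$ is intransitive, then $H$ is contained in a maximal intransitive subgroup, namely a subgroup of the form $(\Sym_k\times\Sym_{n-k})\cap G$ stabilizing a $k$-subset of $\Omega$ for some $1\leqslant k\leqslant n-1$; so without loss of generality we may assume $H$ stabilizes a set of size $k$ and $K$ stabilizes a set of size $\ell$, with $1\leqslant k,\ell\leqslant n-1$. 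It suffices to derive a contradiction in this enlarged situation, since a factorization $G=HK$ forces $G=H'K'$ for any overgroups $H'\geqslant H$, $K'\geqslant K$.

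Next I would extract the arithmetic constraint. Since $\binom{n}{k}=|G:H|$ and $\binom{n}{\ell}=|G:K|$ (using that $G$ is $\Sym_n$ or $\Alt_n$ and these subgroups have the stated index in either case — the factor of $2$ cancels in the index computation, and one should note $\Alt_n$ acts transitively on $k$-subsets for all relevant $k$, so the index really is $\binom{n}{k}$), the factorization condition gives that $\binom{n}{k}\binom{n}{\ell}$ divides $|G|\leqslant n!$. After replacing $k$ by $n-k$ and $\ell$ by $n-\ell$ if necessary, I may assume $k,\ell\leqslant n/2$, and by symmetry that $k\leqslant\ell$. The contradiction should then come from showing $\binom{n}{k}\binom{n}{\ell}>n!$, or more precisely that the product of indices exceeds $|G|$; the cleanest route is probably to compare with the orbit of $H$ on the cosets of $K$ — condition (e) of Lemma~\ref{Factorization} says $H$ is transitive on the $\binom{n}{\ell}$ many $\ell$-subsets of $\Omega$, but $H$ stabilizes a $k$-set $\Delta$, and an $\ell$-subset is determined up to the $H$-action by how it meets $\Delta$ (at most $\min(k,\ell)+1$ possibilities for $|\Delta\cap\Gamma|$ but really the orbits of $\Sym_k\times\Sym_{n-k}$ on $\ell$-subsets are parametrized by the intersection size), so $H$ has at least two orbits on $\ell$-subsets whenever $1\leqslant k\leqslant n-1$ and $1\leqslant\ell\leqslant n-1$ — indeed the intersection $|\Delta\cap\Gamma|$ can take at least two distinct values. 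That immediately contradicts transitivity of $H$ on $\ell$-subsets.

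Let me reconsider which of these two routes is cleaner, since the second is essentially immediate. The orbit argument is the better one: the maximal intransitive subgroup $H=(\Sym_k\times\Sym_{n-k})\cap G$ stabilizing $\Delta$ acts on the set of $\ell$-subsets $\Gamma$ of $\Omega$ with orbits indexed by $j=|\Delta\cap\Gamma|$, where $j$ ranges over $\max(0,\ell-(n-k))\leqslant j\leqslant\min(k,\ell)$; this range contains at least two integers precisely when $1\leqslant k\leqslant n-1$ and $1\leqslant\ell\leqslant n-1$ (one checks the two extreme intersection sizes are distinct under these bounds). Hence $H$ is not transitive on the right cosets of $K$, so by Lemma~\ref{Factorization}(e) we do not have $G=HK$, a contradiction. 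The only genuine wrinkle — and the step I would be most careful about — is the $\Alt_n$ case: one must confirm that $(\Sym_k\times\Sym_{n-k})\cap\Alt_n$ still has exactly these orbits on $\ell$-subsets (it does, since it still contains $\Alt_k\times\Alt_{n-k}$, which is transitive on subsets of each intersection type as long as we are not in the degenerate tiny cases, and those can be checked directly), and to handle small $n$ by hand if needed.
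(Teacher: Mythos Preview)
Your approach is correct and essentially the same as the paper's: both arguments replace $H$ and $K$ by setwise stabilizers, use Lemma~\ref{Factorization} to deduce that one factor must act transitively on the subsets parametrizing the cosets of the other, and then obtain a contradiction because the stabilized set yields an invariant (intersection size) separating at least two such subsets. The paper exhibits two specific witnesses $\Delta_1\subseteq\Lambda$ and $\Delta_2\subseteq\Omega\setminus\Lambda$ rather than counting orbits, and it works directly with $G_\Delta$ in $G$, which sidesteps your $\Alt_n$ worry entirely---note that your concern there is in any case unnecessary, since you only need non-transitivity on $\ell$-subsets, and that is automatically inherited from the overgroup $\Sym_k\times\Sym_{n-k}$.
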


\begin{proof}
Suppose for a contradiction that neither $H$ nor $K$ is transitive on $\Omega$. Then $H$ stabilizes a subset $\Delta$ of $\Omega$ with $|\Delta|\leqslant n/2$ and $K$ stabilizes a subset $\Lambda$ of $\Omega$ with $|\Lambda|\leqslant n/2$. Without loss of generality assume $|\Delta|\leqslant|\Lambda|$. Then as $|\Delta|\leqslant n/2\leqslant|\Omega\setminus\Lambda|$, there exist subsets $\Delta_1$ and $\Delta_2$ of $\Omega$ such that $|\Delta_1|=|\Delta_2|=|\Delta|$,
\begin{equation}\label{eq1}
\Delta_1\subseteq\Lambda\quad\text{and}\quad\Delta_2\subseteq\Omega\setminus\Lambda.
\end{equation}
Since $G=HK$ and $H\leqslant G_\Delta$, we have $G=G_\Delta K$, and so Lemma~\ref{Factorization}(f) implies that $K$ is transitive on the set of right cosets of $G_\Delta$ in $G$. Consequently, $K$ is transitive on the set of subsets of $\Omega$ of size $|\Delta|$. In particular, there exists $g\in K$ such that $\Delta_1^g=\Delta_2$. However, as $g\in K$ stabilizes $\Lambda$, this contradicts~\eqref{eq1}.
\end{proof}

Factorizations of almost simple groups with socle $\A_n$ have been classified in~\cite[Theorem~D]{LPS1990}, from which one may derive the following:

\begin{lemma}\label{lem4}
Suppose $G=\A_n$ or $\Sy_n$ acts naturally on a set $\Omega$ of size $n\geqslant2$ and $G=HK$ with subgroups $H$ and $K$ of $G$. If both $H$ and $K$ are transitive on $\Omega$, then one of the following holds:
\begin{itemize}
\item[(a)] At least one of $H$ or $K$ contains $\A_n$.
\item[(b)] $n=6$, and interchanging $H$ and $K$ if necessary, $\PSL_2(5)\leqslant H\leqslant\PGL_2(5)$ and $K\leqslant\Sy_3\wr\Sy_2$.
\end{itemize}
\end{lemma}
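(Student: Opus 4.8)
The plan is to invoke the classification of factorizations of almost simple groups with socle $\A_n$ from \cite[Theorem~D]{LPS1990} and extract the transitive cases. Since $G = \A_n$ or $\Sy_n$ and $G = HK$ with both $H$ and $K$ transitive on $\Omega$, the pair $(H,K)$ is a \emph{factorization} in the sense of that classification (after checking it is a genuine \emph{maximal} factorization or reducing to one). First I would dispose of the trivial case: if one of $H$, $K$ contains $\A_n$ then conclusion (a) holds, so assume neither does. Then $H$ and $K$ are core-free in the sense that $\Soc(G) = \A_n \not\leqslant H$ and $\A_n \not\leqslant K$, and we may replace $H$ and $K$ by maximal subgroups $\overline H \geqslant H$ and $\overline K \geqslant K$ of $G$ not containing $\A_n$; by Lemma~\ref{Factorization}(a) we still have $G = \overline H\,\overline K$, and both $\overline H$ and $\overline K$ remain transitive on $\Omega$ (transitivity is inherited by overgroups). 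So it suffices to analyze maximal factorizations $G = \overline H\,\overline K$ with both factors transitive.

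Next I would read off from \cite[Theorem~D]{LPS1990} (equivalently the tables in that paper) exactly which maximal factorizations of $\A_n$ or $\Sy_n$ have \emph{both} factors transitive. A transitive maximal subgroup of $\Sy_n$ is either imprimitive of type $\Sy_a \wr \Sy_b$ with $ab = n$, or primitive. The key structural fact driving the classification is that in almost all factorizations $G = \overline H\,\overline K$, at least one factor is intransitive (an intransitive maximal subgroup $\Sy_k \times \Sy_{n-k}$). Going through the list, the only way to get both factors transitive in a maximal factorization of $\A_n$ or $\Sy_n$ turns out to be the exceptional small-degree example at $n = 6$, arising from the exceptional outer automorphism of $\Sy_6$: there $\A_6 = \PSL_2(5)$ acting $2$-transitively on $6$ points can be complemented by an imprimitive group $\Sy_3 \wr \Sy_2$, giving (after adjusting between $\A_6$ and $\Sy_6$) exactly $\PSL_2(5) \leqslant \overline H \leqslant \PGL_2(5)$ and $\overline K \leqslant \Sy_3 \wr \Sy_2$. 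This is conclusion (b). I would then note that passing back down from $\overline H, \overline K$ to $H, K$: since $\overline K \leqslant \Sy_3\wr\Sy_2$ we immediately get $K \leqslant \Sy_3 \wr \Sy_2$, and the order constraint $|H||K| = |G||H\cap K|$ from Lemma~\ref{Factorization}(d), together with $|\Sy_3\wr\Sy_2| = 72$ and $|\A_6| = 360$, forces $|H|$ to be divisible by $5$, hence $\PSL_2(5) \leqslant H$ (as $H$ is transitive of degree $6$ with order divisible by $5$, it contains a Sylow $5$-subgroup and the only transitive subgroups of $\Sy_6$ whose order is divisible by $5$ and which are contained in $\PGL_2(5)$ are $\PSL_2(5)$ and $\PGL_2(5)$); some care is needed to rule out $H$ transitive of order divisible by $5$ lying in other maximal subgroups, but transitivity plus $5 \mid |H|$ and $H$ not containing $\A_6$ pins $H$ inside a point stabilizer's complement, i.e. inside $\PGL_2(5)$.

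The main obstacle I expect is not any single hard computation but rather correctly \emph{harvesting} the relevant rows from \cite[Theorem~D]{LPS1990}: that classification is stated in terms of the socle and the types of the maximal subgroups involved, and one must be careful to (i) translate "type" information into genuine transitivity/intransitivity of the action on the natural $n$ points, (ii) handle the difference between $\A_n$ and $\Sy_n$ and the parity issues when a factor meets $\A_n$ in an index-$2$ subgroup, and (iii) not overlook sporadic low-$n$ coincidences (the $n = 6$ case is the survivor, but one should double-check small $n$ such as $n \in \{5,6,8,10\}$ where extra transitive maximal subgroups and extra factorizations exist, e.g. $\Sy_5 = \AGL_1(5)\cdot\Sy_3$-type phenomena or the $\A_8 \cong \PSL_4(2)$ coincidence). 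Once the table is read correctly, the deduction is short. A secondary, minor point is justifying the reduction to maximal factorizations cleanly — this is standard (overgroups of transitive groups are transitive, and $G = HK$ with $H \leqslant \overline H$, $K \leqslant \overline K$ gives $G = \overline H\,\overline K$), so I would state it in one sentence.
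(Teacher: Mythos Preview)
Your approach is essentially the same as the paper's: the paper gives no proof beyond the sentence ``Factorizations of almost simple groups with socle $\A_n$ have been classified in~\cite[Theorem~D]{LPS1990}, from which one may derive the following,'' and you are simply spelling out that derivation (reduce to maximal factorizations, read off from Theorem~D that the only one with both factors transitive is the $n=6$ case, then descend back to $H$ and $K$). Your descent argument for $H$ is correct---once $H\leqslant\overline{H}\leqslant\PGL_2(5)$ and $5\mid|H|$ (forced since $5\nmid|K|$), the only transitive subgroups of $\PGL_2(5)$ on six points with order divisible by $5$ are $\PSL_2(5)$ and $\PGL_2(5)$---so the confused aside about ``other maximal subgroups'' is unnecessary and can be dropped.
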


The next lemma is also based on the classification of factorizations of almost simple groups with socle $\A_n$.

\begin{lemma}\label{lem10}
Suppose $G=\A_n$ or $\Sy_n$ with $n\geqslant7$ and $G=HK$ with subgroups $H$ and $K$ of $G$. If $H$ and $K$ have the same set of insoluble composition factors, then both $H$ and $K$ contain $\A_n$.
\end{lemma}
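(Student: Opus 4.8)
The plan is to assume for a contradiction that at least one of $H,K$ does not contain $\A_n$, and then to walk through the classification of factorizations of $\A_n$ and $\Sy_n$ coming from Lemmas~\ref{lem1} and~\ref{lem4}, computing the insoluble composition factors of the two factors at each stage. By Lemma~\ref{lem1} one of $H,K$, say $H$, is transitive on $\Omega$. If $K$ is transitive as well, then Lemma~\ref{lem4} (with $n\geqslant7$) gives $\A_n\leqslant H$ or $\A_n\leqslant K$; say $\A_n\leqslant H$, so that $\A_n$ is an insoluble composition factor of $H$ and hence of $K$, and since $|\A_n|$ divides $|K|$ while $|K|$ divides $|\Sy_n|=2|\A_n|$ we get $K\in\{\A_n,\Sy_n\}$, contradicting our assumption. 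Thus $K$ is intransitive; let $\Delta$ be a $K$-invariant subset of $\Omega$ of least size $k$, so $1\leqslant k\leqslant n/2$, and set $M=(\Sy_\Delta\times\Sy_{\Omega\setminus\Delta})\cap G$. Then $K\leqslant M$, so every insoluble composition factor of $K$ is an alternating group $\A_j$ with $5\leqslant j\leqslant n-k<n$; in particular $\A_n$ is not a composition factor of $K$, and therefore (by hypothesis) not of $H$. Finally $G=HK\subseteq HM\subseteq G$ forces $G=HM$, so by Lemma~\ref{Factorization} $H$ is transitive on the right cosets of $M$ in $G$, i.e.\ $H$ is a $k$-homogeneous group of degree $n$.

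For $k\geqslant2$ I would invoke the classification of $k$-homogeneous permutation groups. If $k\geqslant4$, then $H$ is either $k$-transitive --- hence, by the classification of multiply transitive groups, one of $\M_{11},\M_{12},\M_{23},\M_{24}$ --- or one of the finitely many $k$-homogeneous but not $k$-transitive groups (of degree $9$ or $33$, with insoluble composition factor $\PSL_2(8)$ or $\PSL_2(32)$); in every such case $H$ has an insoluble composition factor that is not an alternating group, which is impossible since $K\leqslant\Sy_k\times\Sy_{n-k}$. If $k\in\{2,3\}$ one runs through the (well understood) list of $2$- and $3$-homogeneous groups of degree $n\geqslant7$ not containing $\A_n$: either $H$ has a non-alternating insoluble composition factor or an alternating composition factor $\A_m$ with $m>n-k$ --- both impossible for $K$ --- or $H$ is soluble, or every insoluble composition factor of $H$ lies in $\{\A_5,\A_6,\A_7,\A_8\}$ (arising through $\PSL_2(4)\cong\PSL_2(5)\cong\A_5$, $\PSL_2(9)\cong\PSp_4(2)'\cong\A_6$, $\PSL_4(2)\cong\A_8$). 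To dispose of these last two residual situations I would use a large prime: assuming $n$ exceeds a small explicit bound, pick a prime $p$ with $\max\{8,k\}<n/2<p\leqslant n-2$; then $p\nmid n$, $p\nmid n-1$, $p$ divides $|\A_j|$ for no $j\leqslant8$, and --- going down the short list of possible $H$ --- $p\nmid|H|$, whereas $p$ divides $|G|=|HK|$, so $p$ divides $|K|$. An element of order $p$ in $K\leqslant\Sy_k\times\Sy_{n-k}$ is, since $p>k$ and $p>(n-k)/2$, a single $p$-cycle lying in $\Sy_{\Omega\setminus\Delta}$; letting $O$ be the $K$-orbit containing its support, $K$ induces on $O$ a transitive group of degree $|O|\geqslant p>|O|/2$ containing a $p$-cycle, which forces it to be primitive and hence, by Jordan's theorem on primitive groups containing a cycle of prime length, to contain $\A_{|O|}$ (the exceptional groups in Jordan's theorem have composition factor $\PSL_2(p)$ or a Mathieu group, not an alternating group of degree at most $8$, so are excluded here). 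But then $\A_{|O|}$ with $|O|>8$ is a composition factor of $K$, hence of $H$ --- impossible. The finitely many remaining values of $n$ are then settled by a direct check, e.g.\ in \magma.

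The case $k=1$ is the main obstacle. Here $K$ is transitive of degree $n-1$ on $\Omega\setminus\{\alpha\}$, and restricting the factorization to the stabilizer of $\alpha$ gives $G_\alpha=H_\alpha K$ with $G_\alpha\cong\A_{n-1}$ or $\Sy_{n-1}$. One first notes that $\A_{n-1}$ is not a composition factor of $K$: otherwise $|K|\geqslant(n-1)!/2$ so $K\geqslant\A_{n-1}$, whence $\A_{n-1}$ is a composition factor of $H$ as well, forcing $\A_n\leqslant H$ by the classification of subgroups of $\Sy_n$ of index at most $2n$ (valid for $n\geqslant7$), and then $\A_n\leqslant K$ by the divisibility argument above --- a contradiction. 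Consequently the common set of insoluble composition factors lies in $\{\A_5,\dots,\A_{n-2}\}$. If $H$ is $2$-transitive, then $H_\alpha$ is transitive of degree $n-1$ and Lemma~\ref{lem4} gives $\A_{n-1}\leqslant H_\alpha$ or $\A_{n-1}\leqslant K$; either way $\A_{n-1}$ becomes a composition factor of $H$ or of $K$, hence of both, which we have just excluded. So $H$ is not $2$-transitive, i.e.\ $H_\alpha$ is intransitive on $\Omega\setminus\{\alpha\}$. I would complete the proof by induction on $n$: if $H_\alpha$ and $K$ have the same insoluble composition factors, the inductive hypothesis applied to $G_\alpha=H_\alpha K$ yields $K\geqslant\A_{n-1}$, a contradiction; otherwise $H$ possesses an alternating composition factor not occurring in $H_\alpha$, which must therefore arise from the action of $H$ modulo the largest normal subgroup of $H$ contained in $H_\alpha$ --- a transitive group of degree $n$ whose composition factors are pinned to lie in $\{\A_5,\dots,\A_{n-2}\}$ --- and bounding this, through the block structure of an imprimitive transitive group and the consequent order estimates, is the part I expect to require the most care (and, for small degrees, a further \magma computation).
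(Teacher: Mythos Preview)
Your approach diverges from the paper's at the crucial first step, and this is what leaves you with the gap you acknowledge in the $k=1$ case. The paper does not try to analyse the $k$-homogeneous factor by brute force; instead it quotes \cite[Theorem~D]{LPS1990} directly, which says (after swapping $H$ and $K$) that the \emph{intransitive} factor satisfies $\A_{n-k}\leqslant H\leqslant\Sy_{n-k}\times\Sy_k$ for some $1\leqslant k\leqslant5$, and the other factor $K$ is $k$-homogeneous. The key gain is twofold: $k\leqslant5$, and $\A_{n-k}$ is forced to be a composition factor of the intransitive side and hence, by hypothesis, of the $k$-homogeneous side. Each case $k=1,\dots,5$ is then a one-line check against the list of $k$-homogeneous groups. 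In particular $k=1$ is the \emph{easiest} case in the paper: $\A_{n-1}$ must be a composition factor of the transitive factor $K$, so $|\A_{n-1}|$ divides $|K\cap\A_n|$, whence $K\cap\A_n\cong\A_{n-1}$; but for $n\geqslant7$ such a subgroup fixes a unique point, contradicting transitivity.

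By avoiding Theorem~D you lose both pieces of information: your $k$ ranges up to $n/2$, and you know nothing about the intransitive factor $K$ beyond $K\leqslant\Sy_k\times\Sy_{n-k}$. That is why your $k=1$ case degenerates into analysing an arbitrary transitive $H$ against an arbitrary point-stabilising $K$, which is essentially the original problem one dimension down; your inductive sketch is not complete, and the step ``$H$ possesses an alternating composition factor not occurring in $H_\alpha$, which must therefore arise from the action of $H$ modulo the largest normal subgroup of $H$ contained in $H_\alpha$'' does not obviously terminate without re-deriving something of the strength of Theorem~D. There is also a slip earlier in that paragraph: from $k=1$ you only get that $K$ fixes a point $\alpha$, not that $K$ is transitive on $\Omega\setminus\{\alpha\}$; your chosen $\Delta$ is a smallest invariant subset, not a guarantee that there are only two orbits. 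The fix is simply to invoke \cite[Theorem~D]{LPS1990} at the outset, as the paper does.
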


\begin{proof}
Let $G$ act naturally on a set $\Omega$ of size $n$. If either $H$ or $K$ contains $\A_n$, then the other also contains $\A_n$ since $H$ and $K$ have the same set of insoluble composition factors. To complete the proof we suppose that neither $H$ nor $K$ contains $\A_n$. Then by~\cite[Theorem~D]{LPS1990}, interchanging $H$ and $K$ if necessary, $\A_{n-k}\leqslant H\leqslant\Sy_{n-k}\times\Sy_k$ and $K$ is $k$-homogeneous for some $1\leqslant k\leqslant5$. The $k$-homogeneous but not $k$-transitive permutation groups are classified in~\cite{Kantor1972}, while the $k$-transitive permutation groups with $k\geqslant2$ are well-known (see for example~\cite{Cameron1999}). This gives us a list of all the $k$-homogeneous permutation groups.

First assume that $k=1$. Then $\A_{n-1}$ is an insoluble composition factor of $H$, and hence is a composition factor of $K\cap\A_n$. As a consequence, $|\A_{n-1}|$ divides $|K\cap\A_n|$. This implies that $K\cap\A_n\cong\A_{n-1}$. Since $n\geqslant7$, it follows that $K\cap\A_n$ fixes a unique point of $\Omega$, contradicting the condition that $K$ is transitive.

Next assume that $k=2$. Then $\A_{n-2}$ is an insoluble composition factor of $H$, and hence is a composition factor of $K\cap\A_n$. Moreover, $K$ is $2$-homogeneous. However, checking the list of $2$-homogeneous permutation groups we see that there is no $2$-homogeneous permutation group $K$ of degree $n$ with a composition factor isomorphic to $\A_{n-2}$, a contradiction.

Now assume that $k=3$ or $4$. If $n-k\leqslant4$, then $H$ is soluble and so is $K$. Inspecting the $k$-homogeneous permutation groups of degree $n$ for $3\leqslant k\leqslant4$ and $7\leqslant n\leqslant k+4$ we see that this is not possible. Therefore, $n-k\geqslant5$ so that $\A_{n-k}$ is an insoluble composition factor of $H$ and hence $K$. However, checking the list of $k$-homogeneous permutation groups we see that there is no $k$-homogeneous permutation group $K$ of degree $n$ with a composition factor isomorphic to $\A_{n-k}$ for $3\leqslant k\leqslant4$, a contradiction.

Finally assume that $k=5$. Then according to the list of $5$-homogeneous permutation groups, either $\PSL_2(7)\leqslant K\leqslant\PGaL_2(7)$, or $K$ is one of the groups:
\[
\AGaL_1(7),\ \PSL_2(8),\ \PGaL_2(8),\ M_{12},\ M_{24}.
\]
However, as $\A_{n-5}\leqslant H\leqslant\Sy_{n-5}\times\Sy_5$, it is not possible for $H$ and $K$ to have the same set of insoluble composition factors.
\end{proof}

The next three lemmas on factorizations of almost simple groups are obtained by checking \cite[Tables~1--3]{LPS1990} as a consequence of~\cite[Theorem~A]{LPS1990} and~\cite{LPS1996}.

\begin{lemma}\label{lem20}
Let $G$ be an almost simple group with socle $L=\PSL_n(q)$, where $q=p^f$ with prime $p$. If $G=HK$ with subgroups $H$ and $K$ of $G$ such that $r\in\pi(H)\cap\pi(K)$ for some $r\in\ppd(p,nf)$, then one of the following holds:
\begin{itemize}
\item[(a)] At least one of $H$ or $K$ contains $L$.
\item[(b)] $n=2$ and $q=9$.
\item[(c)] $n=6$ and $q=2$.
\end{itemize}
\end{lemma}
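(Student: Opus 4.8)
The plan is to invoke the main classification of maximal factorizations of almost simple groups with a linear socle, namely \cite[Theorem~A]{LPS1990} together with \cite{LPS1996}, and then go through the cases recorded in \cite[Tables~1--3]{LPS1990} to locate those for which the two factors can share a primitive prime divisor of $(p,nf)$. First I would note that we may assume neither $H$ nor $K$ contains $L$, since otherwise conclusion~(a) holds; then, enlarging $H$ and $K$ to maximal subgroups of $G$ not containing $L$ if necessary (which only enlarges $\pi(H)$ and $\pi(K)$, so preserves the hypothesis $r\in\pi(H)\cap\pi(K)$), we obtain a maximal factorization $G=HK$ and can read off the possibilities from the tables. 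The key observation driving the case analysis is the one highlighted in the preliminaries: any $r\in\ppd(p,nf)$ satisfies $r\equiv1\pmod{nf}$, hence $r>nf$, and moreover $r$ divides $|L|$ to the first power only, lying in the ``large'' part of the order corresponding to the Singer-type torus of order $(q^n-1)/((q-1)\gcd(n,q-1))$.

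The main step is then an arithmetic elimination: for each entry of \cite[Tables~1--3]{LPS1990} with socle $\PSL_n(q)$, one factor is typically a parabolic subgroup $P_1$ or $P_{n-1}$ (a point or hyperplane stabilizer) or a $\calC_2$-subgroup of type $\GL_{n/2}(q)\wr\Sy_2$, and the other factor ranges over a short explicit list. The subgroup $\GL_1(q^n)$-type torus, whose order is divisible by every $r\in\ppd(p,nf)$, is contained in very few maximal subgroups: essentially only in the normalizer of a Singer cycle (a $\calC_3$-subgroup of type $\GL_1(q^n).n$) and, when $n$ is prime, nowhere else proper. So I would check, factor by factor in the tables, whether a given maximal subgroup can have order divisible by some $r\in\ppd(p,nf)$; for parabolic subgroups $P_k$ this forces $r \di |\GL_k(q)|\cdot|\GL_{n-k}(q)|$ up to the torus part, which by the defining property of primitive prime divisors is impossible unless $k=0$ or $k=n$, i.e.\ the ``parabolic'' factor is improper. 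Running this through the tables leaves only a handful of sporadic small cases, and the exceptional configurations that survive are exactly $(n,q)=(2,9)$ — where $\PSL_2(9)\cong\A_6$ admits the factorization underlying Lemma~\ref{lem4}(b) and $\ppd(3,2)=\{?\}$ needs separate handling since $(p,nf)=(3,2)$ — and $(n,q)=(6,2)$, where $\PSL_6(2)$ has the well-known factorization with factors $P_1$ (or $P_5$) and $\mathrm{S}_8\cong \mathrm{Sp}_6(2)$-related subgroups, for which $7\in\ppd(2,6)$ divides both factors.

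Concretely I would organize the argument as: (i) reduce to a maximal factorization $G=HK$ with $H,K$ not containing $L$; (ii) dispose of the genuinely small cases $q\leqslant 4$ and $n\leqslant 4$ by direct inspection of \cite[Tables~1--4]{LPS1990} (or by \magma for the finitely many open configurations), isolating $(2,9)$ and checking that the factorization of $\A_6$ into $\PGL_2(5)$ and a subgroup of $\Sy_3\wr\Sy_2$ does share the relevant $\ppd$, so case~(b) is needed; (iii) for the generic range use the $\ppd$ divisibility constraint to show that at least one of $H,K$ must lie in the Singer normalizer $\GaL_1(q^n)$, consult the tables for factorizations $G=HK$ with one factor of this type, and observe that the only such entry with the other factor also divisible by a $\ppd(p,nf)$ is $(n,q)=(6,2)$, giving case~(c). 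The main obstacle I anticipate is bookkeeping: \cite[Tables~1--3]{LPS1990} have many rows, several with parenthetical restrictions on $q$ modulo small integers and with twisted/untwisted subtleties for $\PSL$ versus $\PSU$-adjacent entries, so one must be careful not to overlook a row (for instance the $\PSL_4(q)\cong \POm_6(q)$ coincidences, or the $\PSL_2(q)$ rows with $\A_5$ factors) where a small-degree alternating or sporadic factor accidentally carries a prime congruent to $1\pmod{nf}$; verifying that these accidents occur only for $(2,9)$ and $(6,2)$ is the delicate part, and is where the explicit bound $r>nf$ does most of the work.
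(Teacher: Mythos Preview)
Your approach is essentially the same as the paper's, which simply states that the lemma follows by checking \cite[Tables~1--3]{LPS1990} as a consequence of \cite[Theorem~A]{LPS1990} and \cite{LPS1996}; your proposal just fleshes out how one would organize that table check. (One small slip to correct when you execute it: for $(n,q)=(2,9)$ you have $p=3$ and $f=2$, so $(p,nf)=(3,4)$ rather than $(3,2)$, with $5\in\ppd(3,4)$ the relevant prime; and $\Sy_8\not\cong\Sp_6(2)$, though this does not affect the strategy.)
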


\begin{lemma}\label{lem19}
Let $G$ be an almost simple group with socle $L=\PSL_n(q)$. If $G=HK$ with subgroups $H$ and $K$ of $G$ such that $\pi(G)\setminus(\pi(q-1)\cup\pi(\Out(L))\subseteq\pi(H)\cap\pi(K)$, then one of the following holds:
\begin{itemize}
\item[(a)] At least one of $H$ or $K$ contains $L$.
\item[(b)] $n=2$ and $q=8$.
\item[(c)] $n=2$ and $q=9$.
\end{itemize}
\end{lemma}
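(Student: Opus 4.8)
The plan is to deduce Lemma~\ref{lem19} from the classification of maximal factorizations of almost simple groups in~\cite[Theorem~A]{LPS1990}, tabulated in~\cite[Tables~1--3]{LPS1990} and corrected in~\cite{LPS1996}. Write $q=p^f$ with $p$ prime, put $d=\gcd(n,q-1)$, and set $\calS:=\pi(G)\setminus(\pi(q-1)\cup\pi(\Out(L)))$. Since $L\trianglelefteq G\leqslant\Aut(L)$ we have $\pi(G)\subseteq\pi(L)\cup\pi(\Out(L))$, so $\calS=\pi(L)\setminus(\pi(q-1)\cup\pi(\Out(L)))$ depends only on $L$; moreover $|\Out(L)|$ divides $2df$ with $d\mid q-1$, whence $\pi(\Out(L))\subseteq\pi(q-1)\cup\pi(f)\cup\{2\}$ and so
\[
\calS\ \supseteq\ \pi(L)\setminus\big(\pi(q-1)\cup\pi(f)\cup\{2\}\big).
\]
Now suppose conclusion~(a) fails, so that neither $H$ nor $K$ contains $L$. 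A standard argument (see~\cite[Section~1]{LPS1990}) lets us pass to maximal subgroups $M_1\geqslant H$ and $M_2\geqslant K$ of $G$ with $L\not\leqslant M_1$, $L\not\leqslant M_2$ and $G=M_1M_2$, a maximal factorization recorded in~\cite[Tables~1--3]{LPS1990}. Since $\pi(H)\cap\pi(K)\subseteq\pi(M_1)\cap\pi(M_2)$, our hypothesis forces $\calS\subseteq\pi(M_1)\cap\pi(M_2)$, and it remains to show this fails for every table entry unless $(n,q)\in\{(2,8),(2,9)\}$.

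The key tool is primitive prime divisors. For $2\leqslant i\leqslant n$ and $r\in\ppd(q,i)$ we have $r\equiv1\pmod i$, so $r\geqslant i+1\geqslant3$ is odd; also $r\nmid q-1$ and $r\nmid d$ (as $d\mid q-1$), so $r\in\pi(\Out(L))$ only if $r\mid f$. A comparison of sizes --- the primitive part of $q^i-1$ far exceeds $f$ once $i\geqslant3$, and likewise when $i=2$ apart from a short explicit list, principally $q=8$ (where $\ppd(8,2)=\{3\}=\pi(f)$) together with the Zsigmondy exceptions --- then gives $\ppd(q,i)\subseteq\calS$ for all $3\leqslant i\leqslant n$. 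Thus, for $n\geqslant3$, the set $\calS$ contains a primitive prime divisor of $q^i-1$ for every $3\leqslant i\leqslant n$ (using the convention $\ppd(2,6)=\{7\}$).

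With this, one runs through~\cite[Tables~1--3]{LPS1990}. For essentially every entry, one of the factors --- say $M_1$ --- is a proper geometric subgroup of $L$ whose order (from the standard formulas) is not divisible by some $r\in\calS$, and this $r$ can be taken to be a primitive prime divisor of $q^i-1$ with $i$ close to $n$: one may use $i=n$ for the parabolic subgroups --- here $\pi(P_k)=\{p\}\cup\bigcup_{j\leqslant\max(k,n-k)}\pi(q^j-1)$ omits any genuine primitive prime divisor of $q^n-1$ --- and for the imprimitive, tensor-decomposition, extraspecial-normalizer and tensor-induced classes $\calC_2,\calC_4,\calC_6,\calC_7$. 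For the $\calC_3$ field-extension subgroups (in particular Singer normalizers), the $\calC_8$ classical subgroups, and a handful of low-dimensional anomalies (for example the parabolics of $\PSL_6(2)$, where $\ppd(2,6)=\{7\}$ is not a genuine primitive prime divisor), one uses instead $i=n-1$ or a smaller value; the $\calC_5$ subfield subgroups are handled by a similar argument, sometimes invoking the explicit structure of the partner factor. As $r\in\calS\setminus\pi(M_1)$, this contradicts $\calS\subseteq\pi(M_1)\cap\pi(M_2)$, so no entry with $n\geqslant3$ can occur. For $n=2$ the subgroups of $\PSL_2(q)$ are described by Dickson's theorem; inspecting the (short) list of factorizations of $\PSL_2(q)$ shows that $\calS\subseteq\pi(H)\cap\pi(K)$ forces $q\in\{8,9\}$, both of which genuinely arise: for $q=8$ one has $\calS=\pi(\PSL_2(8))\setminus(\{7\}\cup\{3\})=\{2\}$, satisfied since $\PSL_2(8)$ admits factorizations with both factors of even order; for $q=9$ one has $\PSL_2(9)\cong\A_6$ and $\calS=\{3,5\}$, realized by $\A_6=\A_5\cdot\PSL_2(5)$ (a point stabilizer times a transitive $\PSL_2(5)$, meeting in a group of order $10$), both factors of order $60$. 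This yields conclusions~(b) and~(c).

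The step I expect to be the main obstacle is precisely this exhaustive pass through~\cite[Tables~1--3]{LPS1990} together with Dickson's list for $n=2$: for each family of maximal factorizations one must identify the correct primitive prime divisor in $\calS$ omitted by one factor, and one must separately dispose of the few configurations in which $\calS$ happens to be small --- for instance $(n,q)=(6,2)$, where $\calS=\{3,5,7,31\}$ and in the factorization $\PSL_6(2)=\Sp_6(2)\cdot P_1$ the factor $\Sp_6(2)$ misses $31\in\ppd(2,5)$ --- so as to confirm that $(2,8)$ and $(2,9)$ are the only survivors. A minor additional point is the reduction at the outset: since $\calS\cap\pi(\Out(L))=\emptyset$, the hypothesis $\calS\subseteq\pi(H)\cap\pi(K)$ in fact forces $\calS\subseteq\pi(H\cap L)\cap\pi(K\cap L)$, which allows the degenerate possibility that both maximal overgroups of $H$ and $K$ contain $L$ to be ruled out by the same primitive-prime-divisor analysis carried out inside $L$.
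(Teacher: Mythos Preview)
Your proposal is correct and takes essentially the same approach as the paper: the paper's entire proof of this lemma (together with the two neighbouring lemmas) is the single sentence that it is ``obtained by checking~\cite[Tables~1--3]{LPS1990} as a consequence of~\cite[Theorem~A]{LPS1990} and~\cite{LPS1996}.'' Your sketch simply unpacks how that table-check goes, via primitive prime divisors, and correctly isolates $(n,q)\in\{(2,8),(2,9)\}$ as the surviving exceptions.
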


\begin{lemma}\label{lem24}
Let $G$ be an almost simple group with socle $L=\PSp_{2m}(p)$, where $m\geqslant1$ and $p$ is prime. If $G=HK$ with subgroups $H$ and $K$ of $G$ such that $\pi(G)\setminus\pi(p(p-1))\subseteq\pi(H)\cap\pi(K)$, then interchanging $H$ and $K$ if necessary, one of the following holds:
\begin{itemize}
\item[(a)] At least one of $H$ or $K$ contains $L$.
\item[(b)] $m=1$, $p$ is a Mersenne prime, $H\cap L=\D_{p+1}$ and $K\cap L=\C_p\rtimes\C_{(p-1)/2}$.
\item[(c)] $m=1$, $p=7$, $H\cap L=\C_7\rtimes\C_3$ and $K\cap L=\Sy_4$.
\end{itemize}
\end{lemma}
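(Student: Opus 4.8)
The plan is to reduce to the classification of maximal factorizations of almost simple groups, \cite[Theorem~A]{LPS1990}, and then inspect \cite[Tables~1--3]{LPS1990}. Write $L=\PSp_{2m}(p)$. First I would record the arithmetic content of the hypothesis: for each $d\geqslant2$ every prime in $\ppd(p,d)$ divides $p^d-1$ but neither $p$ nor any $p^i-1$ with $i<d$, so $\ppd(p,d)$ is disjoint from $\pi(p(p-1))$; moreover, since $|L|=p^{m^2}\prod_{i=1}^m(p^{2i}-1)$, we have $\ppd(p,d)\subseteq\pi(L)$ exactly when $d$ is even with $d\leqslant2m$ or $d$ is odd with $d\leqslant m$. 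Hence for each such $d$ the hypothesis gives $\ppd(p,d)\subseteq\pi(H)\cap\pi(K)$ (using the convention $\ppd(2,6)=\{7\}$ when $p=2$ and $2m=6$). If $H$ or $K$ contains $L$ we are in case~(a); otherwise, by Lemma~\ref{Factorization} there are maximal subgroups $M_1\geqslant H$ and $M_2\geqslant K$ of $G$ with $G=M_1M_2$, so $(G,M_1,M_2)$ is one of the maximal factorizations listed in \cite[Tables~1--3]{LPS1990}, and $\ppd(p,d)\subseteq\pi(M_1)\cap\pi(M_2)$ for all the relevant $d$.

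Next I would treat $m\geqslant2$. Since $q=p$ is prime, every row of the tables with socle $L$ requiring a proper subfield is vacuous. A row-by-row inspection then shows that, apart from a short list of exceptions, at least one factor has order coprime to a primitive prime $r\in\ppd(p,2m)$ — this holds for parabolic subgroups and for the geometric subgroups $\Sp_{2a}(p)\wr\Sy_b$ with $2a<2m$ (here $r>2m\geqslant b$), $\GL_m(p).2$, the tensor-type and extraspecial-normalizer subgroups, and (when $p=2$) $\GO_{2m}^+(2)$ — whereas $r$ divides $|L|$; so this factor omits a prime forced by the hypothesis and the row is discarded. The only rows surviving have both factors drawn from the short list $\{\GO_{2m}^-(2),\ \GU_m(p).2,\ \Sp_m(p^2).2\}$ or from the finitely many $\calC_9$ and sporadic entries; for the former, one checks that some primitive prime $r'\in\ppd(p,d')$ with a suitably chosen $d'\leqslant2m$ fails to divide the order of one of the two factors, again contradicting the hypothesis, while the latter — the cases with $(2m,p)$ among $(6,2)$, $(8,2)$, $(6,3)$ and the like — are settled by direct computation, with \magma if need be. The upshot is that when $m\geqslant2$ there is no factorization of $G$ with proper factors, so the conclusion holds.

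Finally I would do $m=1$, where $L=\PSL_2(p)$ with $p\geqslant5$ (otherwise $L$ is not simple), $G\leqslant\PGL_2(p)$, and the maximal subgroups of $G$ are completely known. The hypothesis now says exactly that every odd prime divisor of $p+1$ divides both $|H|$ and $|K|$. Suppose first that $p+1$ has an odd prime divisor; then the only maximal subgroups of $G$ of order divisible by such a prime are the dihedral-type subgroups (of order at most $2(p+1)$) and the $\A_4/\Sy_4/\A_5$-type subgroups (of order at most $60$), so $H$ and $K$ lie in these, and $|H||K|\geqslant|G|\geqslant p(p^2-1)/2$ forces $p\leqslant9$, hence $p=5$; a direct check then shows that $\PSL_2(5)$ has no factorization meeting the hypothesis, so we are in case~(a). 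If instead $p+1$ is a power of $2$, i.e. $p$ is a Mersenne prime, the hypothesis is vacuous, and running through the factorizations of $G$ one finds that, after interchanging $H$ and $K$ if necessary, either $K\cap L=\C_p\rtimes\C_{(p-1)/2}$ (Borel-type) and $H\cap L=\D_{p+1}$, which is case~(b), or $p=7$ with $H\cap L=\C_7\rtimes\C_3$ and $K\cap L=\Sy_4$, which is case~(c); here \cite{LPS1996} controls the possible non-maximality of $H$ and $K$. Combining the three cases gives the lemma.

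The step I expect to be the main obstacle is the verification for $m\geqslant2$: one must pin down, for every subgroup type occurring in \cite[Tables~1--3]{LPS1990} with socle $\PSp_{2m}(p)$, precisely which primitive prime divisors $\ppd(p,d)$ divide its order, and then handle uniformly the low-rank and small-characteristic configurations — notably $\Sp_4(p)$, the symplectic groups over $\bbF_2$ of small rank, and the Zsigmondy-exceptional pair $(p,2m)=(2,6)$ — where the clean argument ``$\ppd(p,2m)$ eliminates one factor'' degenerates and must be replaced by an explicit order or subgroup computation.
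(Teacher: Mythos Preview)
Your proposal is correct and follows the same approach as the paper: the paper simply states that this lemma (together with the two preceding it) is ``obtained by checking \cite[Tables~1--3]{LPS1990} as a consequence of \cite[Theorem~A]{LPS1990} and~\cite{LPS1996}'', with no further detail, and your sketch is a reasonable elaboration of exactly that strategy. Two minor points of wording to tighten: your sentence ``when $m\geqslant2$ there is no factorization of $G$ with proper factors'' is literally false ($\PSp_{2m}(p)$ has many factorizations) and should read ``no factorization satisfying the hypothesis''; and in the $m=1$ analysis your bound $p\leqslant9$ is slightly optimistic, since one of the factors could sit in an $\A_5$ while the other sits in a dihedral group of order up to $2(p+1)$, pushing the cutoff to $p\leqslant11$ or so---but the residual small cases are easily dispatched just as you indicate.
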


We close this subsection with two results on factorizations of linear groups.

\begin{lemma}\label{lem21}
Let $V$ be a vector space and $G\leqslant\GL(V)$ such that $G$ acts transitively on the set of subspaces of $V$ of any fixed dimension. Suppose $G=HK$ with subgroups $H$ and $K$ stabilizing subspaces $U$ and $W$ of $V$, respectively. Then either $U=0$ or $V$, or $W=0$ or $V$.
\end{lemma}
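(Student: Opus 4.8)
The plan is to turn the factorization into transitivity statements for the two factors. Since $H$ stabilizes $U$, we have $H\leqslant G_U$, the setwise stabilizer of $U$ in $G$, so $G=G_U K$; by Lemma~\ref{Factorization}(f), $K$ acts transitively on the set of right cosets of $G_U$ in $G$. As $G$ is transitive on the set of subspaces of $V$ of dimension $\dim U$, this set is identified with the coset space, and hence $K$ is transitive on all subspaces of $V$ of dimension $\dim U$. Symmetrically, $K\leqslant G_W$ gives $G=HG_W$, and Lemma~\ref{Factorization}(e) then shows that $H$ is transitive on all subspaces of $V$ of dimension $\dim W$.

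I would then argue by contradiction: suppose $0\neq U\neq V$ and $0\neq W\neq V$, and write $d=\dim U$, $e=\dim W$, $n=\dim V$, so that $1\leqslant d,e\leqslant n-1$. The hypotheses are symmetric under interchanging the pairs $(H,U)$ and $(K,W)$ (using $G=HK\iff G=KH$), so we may assume $d\leqslant e$. Now $K$ stabilizes the proper nonzero subspace $W$ and therefore maps each $d$-dimensional subspace of $W$ to another $d$-dimensional subspace of $W$; in particular it cannot map a $d$-subspace lying inside $W$ to one not contained in $W$. But $V$ has $d$-subspaces of both types: one inside $W$ (since $1\leqslant d\leqslant e$), and one not contained in $W$, for instance $\langle v,a_1,\dots,a_{d-1}\rangle$ where $v\in V\setminus W$ and $a_1,\dots,a_{d-1}$ are linearly independent vectors of $W$ (these exist since $d-1\leqslant e$). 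This contradicts the transitivity of $K$ on $d$-dimensional subspaces established above, and the proof is complete.

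I do not expect a genuine obstacle here. The argument rests entirely on the standard dictionary between group factorizations and transitive actions --- exactly as in the proof of Lemma~\ref{lem1} --- together with an elementary linear-algebra observation. The only minor points to check are the degenerate cases, where the statement is vacuous (for example when $n\leqslant1$), and the numerology: one must confirm that the inequality $d\leqslant e$ simultaneously supplies a $d$-subspace inside $W$ and enough independent vectors of $W$ to build a $d$-subspace outside it, both of which are immediate.
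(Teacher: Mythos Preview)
Your proof is correct and follows essentially the same approach as the paper: assume $\dim U\leqslant\dim W$, deduce from $G=HK$ with $H\leqslant G_U$ that $K$ is transitive on $\dim U$-dimensional subspaces, and derive a contradiction since $K$ stabilizes $W$ but there exist $\dim U$-subspaces both inside and outside $W$. Your derivation of the symmetric fact that $H$ is transitive on $\dim W$-subspaces is superfluous once the WLOG assumption $d\leqslant e$ is in place, but otherwise the arguments coincide.
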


\begin{proof}
Suppose on the contrary that $0<U<V$ and $0<W<V$. Without loss of generality, assume that $\dim(U)\leqslant\dim(W)$. Since $H$ stabilizes $U$ and $G=HK$ acts transitively on the set of subspaces of $V$ of dimension $\dim(U)$, we conclude that $K$ acts transitively on the set of subspaces of $V$ of dimension $\dim(U)$. Take $U_1$ and $U_2$ to be subspaces of $V$ of dimension $\dim(U)$ such that $U_1\leqslant W$ and $U_2\nleqslant W$. Then since $K$ stabilizes $W$, there is no element of $K$ mapping $U_1$ to $U_2$, a contradiction.
\end{proof}

\begin{lemma}\label{lem22}
Let $V$ be a vector space, $U$ be a nontrivial proper subspace of $V$ and $G\leqslant\GL(V)$ such that $G$ stabilizes $U$ and acts transitively on the set of complements of $U$ in $V$. Suppose $G=HK$ with subgroups $H$ and $K$ such that $H$ stabilizes a complement of $U$ in $V$ and $K$ stabilizes a subspace $W$ of $V$. Then either $W\leqslant U$ or $W+U=V$.
\end{lemma}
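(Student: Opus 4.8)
The plan is to adapt the proof of Lemma~\ref{lem21}. Let $C$ be a complement of $U$ in $V$ that is stabilized by $H$, and let $G_C$ be the setwise stabilizer of $C$ in $G$, so that $H\leqslant G_C$. From $G=HK$ we obtain $G=G_CK$, and Lemma~\ref{Factorization}(f) then shows that $K$ acts transitively on the set of right cosets of $G_C$ in $G$. Since $G$ is transitive on the set $\mathcal{X}$ of all complements of $U$ in $V$ and $G_C$ is precisely the stabilizer in $G$ of the point $C\in\mathcal{X}$, the canonical bijection between $\mathcal{X}$ and this coset space is $G$-equivariant; hence $K$ is transitive on $\mathcal{X}$.

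Now suppose for a contradiction that $W\not\leqslant U$ and $W+U\neq V$. Since $K$ stabilizes $W$, whenever $C_1,C_2\in\mathcal{X}$ and $k\in K$ satisfy $C_1^k=C_2$ we get $W\cap C_2=W^k\cap C_1^k=(W\cap C_1)^k$; as $K$ is transitive on $\mathcal{X}$, this forces $\dim(W\cap C)$ to take one and the same value, say $m$, for every $C\in\mathcal{X}$. The contradiction will come from exhibiting two complements of $U$ whose intersections with $W$ have different dimensions.

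For the first, choose a complement $W_1$ of $W\cap U$ inside $W$; then $W_1\neq0$ because $W\not\leqslant U$, and $W_1\cap U=0$, so $W_1$ extends to a complement $C_1$ of $U$ in $V$ (take $C_1=W_1\oplus X$ with $X$ a complement of $W_1+U$ in $V$, where $W_1+U\neq V$ because $W_1\subseteq W$ and $W+U\neq V$), and using $W\subseteq W_1+U$ one checks directly that $W\cap C_1=W_1$; thus $m=\dim W-\dim(W\cap U)\geqslant1$. For the second, fix this $C_1$ and parametrize the members of $\mathcal{X}$ as $C_\phi=\{v+\phi(v):v\in C_1\}$ for $\phi\in\mathrm{Hom}(C_1,U)$. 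Writing $\pi_U$ and $\pi_{C_1}$ for the projections of $V=U\oplus C_1$, one computes $W\cap C_\phi=\ker(\pi_U|_W-\phi\circ\pi_{C_1}|_W)$, so that $\dim(W\cap C_\phi)=\dim W-\mathrm{rank}(\pi_U|_W-\phi\circ\pi_{C_1}|_W)$; a dimension count using the hypotheses shows that $\phi$ may be chosen so that this rank exceeds $\dim(W\cap U)$, giving a complement $C_2\in\mathcal{X}$ with $\dim(W\cap C_2)<m$. This contradicts the previous paragraph, and the lemma follows.

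The step I expect to be the main obstacle is this last dimension count. The image of $\pi_U|_W-\phi\circ\pi_{C_1}|_W$ equals $(W\cap U)+\phi(\pi_{C_1}(W))$, and here $\pi_{C_1}(W)\neq0$ (since $W\not\leqslant U$) while the restriction $\phi|_{\pi_{C_1}(W)}$ may be prescribed arbitrarily; so the point is to verify that the hypotheses leave enough room inside $U$ to strictly enlarge $W\cap U$ in this way. Everything else is routine bookkeeping with subspace dimensions, in the style of the proof of Lemma~\ref{lem21}.
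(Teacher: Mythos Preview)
Your argument is correct up to the flagged dimension count, but that count fails in one case: when $U\subseteq W$. There $W\cap U=U$, and since $\phi$ takes values in $U$ one always has $(W\cap U)+\phi(W_1)=U$; the rank is pinned at $\dim U$ for every $\phi$, and no complement $C_\phi$ can achieve $\dim(W\cap C_\phi)<m$. Indeed, when $U\subseteq W$ the modular law gives $W=U\oplus(W\cap C)$ for \emph{every} complement $C$ of $U$, so $\dim(W\cap C)=\dim W-\dim U$ is genuinely constant over all of $\mathcal{X}$ and your invariant cannot separate complements. (When $U\not\subseteq W$ your argument does go through: pick $u\in U\setminus W$ and a nonzero $w_1\in W_1$, set $\phi(w_1)=u$ and extend; then the image strictly enlarges $W\cap U$.)

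This gap is not patchable, because the lemma as stated is false. Take $V$ three-dimensional over any field, $U=\langle e_1\rangle$, $W=\langle e_1,e_2\rangle$, $G=\Stab_{\GL(V)}(U)$, $H=\Stab_G(\langle e_2,e_3\rangle)$ and $K=\Stab_G(W)$. The subgroup $K$ contains every map of the form $I+n$ with $n(V)\subseteq U$ and $n(U)=0$, and these act simply transitively on the complements of $U$; hence $K$ is transitive on complements and $G=HK$. Yet $W\not\leqslant U$ and $W+U=W\neq V$. The paper's own proof has the same defect: its invariant is $\dim(C\cap(W+U))$, but for any complement $C$ of $U$ the modular law gives $W+U=U\oplus(C\cap(W+U))$, so this dimension equals $\dim(W+U)-\dim U$ for every $C$, and the asserted value $\dim(W_2\cap(W+U))=s-1$ is wrong (note that $e_1+a_1\in W_2\cap(W+U)$). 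What is true, and what your argument actually proves, is the lemma under the additional hypothesis $U\not\subseteq W$.
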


\begin{proof}
Suppose for a contradiction that $W\nleqslant U$ and $W+U\neq V$, which means $U<W+U<V$. Extend a basis $e_1,\dots,e_r$ of $U$ to a basis $e_1,\dots,e_r,a_1,\dots,a_s$ of $W+U$ and then a basis $e_1,\dots,e_r,a_1,\dots,a_s,b_1,\dots,b_t$ of $V$. Let
\[
W_1=\langle a_1,a_2,\dots,a_s,b_1,b_2,\dots,b_t\rangle\quad\text{and}\quad W_2=\langle e_1+a_1,a_2,\dots,a_s,b_1,b_2,\dots,b_t\rangle.
\]
Then $W_1$ and $W_2$ are both complements of $U$ in $V$. Since $H$ stabilizes a complement of $U$ in $V$ and $G=HK$ acts transitively on the set of complements of $U$ in $V$, we deduce that $K$ acts transitively on the set of complements of $U$ in $V$. In particular, there exists $g\in K$ such that $W_1^g=W_2$. Since $K$ stabilizes $W$ and $U$, we have $(W+U)^g=W+U$. Hence $(W_1\cap(W+U))^g=W_1^g\cap(W+U)^g=W_2\cap(W+U)$. However, $W_1\cap(W+U)=\langle a_1,a_2,\dots,a_s\rangle$ and $W_2\cap(W+U)=\langle a_2,\dots,a_s\rangle$, so that $\dim(W_1\cap(W+U))=s$ and $\dim(W_2\cap(W+U))=s-1$, a contradiction.
\end{proof}

\subsection{$s$-Arc-transitive digraphs}

We say a group $G$ \emph{acts} on a digraph $\Gamma$ if $G$ acts on the vertex set of $\Gamma$ with image in $\Aut(\Gamma)$. For vertices $v_1,\dots,v_i$ of $\Gamma$ let $G_{v_1\dots v_i}$ denote the subgroup of $G$ that fixes each of $v_1,\dots,v_i$. The following two lemmas slightly extend~\cite[Lemma~2.2]{GX2018} and~\cite[Corollary~2.11]{GX2018} by allowing the action of $G$ on the vertex set to be unfaithful. Their proof is along the same lines as those in~\cite{GX2018}, so are omitted.

\begin{lemma}\label{lem15}
Let $\Gamma$ be a digraph, and $v_0\rightarrow v_1\rightarrow\dots\rightarrow v_{s-1}\rightarrow v_s$ be an $s$-arc of $\Gamma$ with $s\geqslant2$. Suppose that $G$ acts arc-transitively on $\Gamma$. Then $G$ acts $s$-arc-transitively on $\Gamma$ if and only if $G_{v_1\dots v_i}=G_{v_0v_1\dots v_i}G_{v_1\dots v_iv_{i+1}}$ for each $i$ in $\{1,\dots,s-1\}$.
\end{lemma}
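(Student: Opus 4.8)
The plan is to prove the equivalence by induction on $s$, with the inductive step isolating a single factorization. Throughout I would use the standing hypothesis that $G$ is arc-transitive, and write $\Gamma^{+}(u)=\{w:u\rightarrow w\}$ for the out-neighborhood of a vertex $u$.

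First I would record two preliminary observations. (i) Since $\Gamma$ contains the arc $v_{0}\rightarrow v_{1}$ and $v_{1}$ has the out-neighbor $v_{2}$, arc-transitivity forces every vertex possessing an in-neighbor to possess an out-neighbor: if $w\rightarrow u$, some $g\in G$ maps $w\rightarrow u$ to $v_{0}\rightarrow v_{1}$, so $u=v_{1}^{g^{-1}}$ has the out-neighbor $v_{2}^{g^{-1}}$. Hence every $i$-arc with $i\geqslant1$ extends to an $(i+1)$-arc, and in particular $(i+1)$-arc-transitivity of $G$ implies $i$-arc-transitivity. (ii) If $G$ is $i$-arc-transitive (reading ``$1$-arc-transitive'' as ``arc-transitive''), then $G_{v_{1}\dots v_{i}}$ is transitive on $\Gamma^{+}(v_{i})$, the stabilizer of $v_{i+1}$ in $G_{v_{1}\dots v_{i}}$ being $G_{v_{1}\dots v_{i}v_{i+1}}$: indeed, for $w,w'\in\Gamma^{+}(v_{i})$ the sequences $v_{1},\dots,v_{i},w$ and $v_{1},\dots,v_{i},w'$ are $i$-arcs, so any $g\in G$ carrying one to the other fixes each of $v_{1},\dots,v_{i}$ and sends $w$ to $w'$.

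The key step I would then establish is: \emph{if $G$ is $i$-arc-transitive, then $G$ is $(i+1)$-arc-transitive if and only if $G_{v_{1}\dots v_{i}}=G_{v_{0}v_{1}\dots v_{i}}G_{v_{1}\dots v_{i}v_{i+1}}$.} By (ii) and the orbit-stabilizer correspondence, $\Gamma^{+}(v_{i})$ is isomorphic, as a $G_{v_{1}\dots v_{i}}$-set, to the set of right cosets of $G_{v_{1}\dots v_{i}v_{i+1}}$ in $G_{v_{1}\dots v_{i}}$; applying Lemma~\ref{Factorization}(e) inside $G_{v_{1}\dots v_{i}}$ to its subgroups $G_{v_{0}\dots v_{i}}$ and $G_{v_{1}\dots v_{i}v_{i+1}}$, the displayed factorization is equivalent to $G_{v_{0}\dots v_{i}}$ being transitive on $\Gamma^{+}(v_{i})$. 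So it remains to check that, under $i$-arc-transitivity, $G$ is $(i+1)$-arc-transitive precisely when $G_{v_{0}\dots v_{i}}$ is transitive on $\Gamma^{+}(v_{i})$; one direction is immediate, and for the other I would use $i$-arc-transitivity to move any two given $(i+1)$-arcs to the form $(v_{0},\dots,v_{i},w)$ and $(v_{0},\dots,v_{i},w')$, and then use the transitivity of $G_{v_{0}\dots v_{i}}$ on $\Gamma^{+}(v_{i})$ to carry $w$ to $w'$.

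Granting the key step, the lemma follows: the base case $s=2$ is precisely the key step with $i=1$ (where $G$ is arc-transitive by hypothesis), and for $s\geqslant3$ one observes that $G$ is $s$-arc-transitive if and only if $G$ is $(s-1)$-arc-transitive and, by the key step with $i=s-1$, the factorization holds at $i=s-1$; by the inductive hypothesis applied to the $(s-1)$-arc $v_{0}\rightarrow\dots\rightarrow v_{s-1}$, the former is equivalent to the factorization holding at each $i\in\{1,\dots,s-2\}$, which completes the induction. I do not expect a serious obstacle, the content being routine bookkeeping; the points that do need attention are the extendability of arcs in observation (i) --- this is where arc-transitivity does the work that vertex-transitivity usually does --- and the remark that the whole argument is insensitive to the $G$-action on the vertex set being unfaithful, since only transitivity and the orbit-stabilizer correspondence are invoked. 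This is exactly why the proof parallels that of~\cite[Lemma~2.2]{GX2018}.
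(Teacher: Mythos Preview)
Your proposal is correct. The paper does not give its own proof of this lemma, stating only that it is along the same lines as \cite[Lemma~2.2]{GX2018}; your argument is precisely that proof, so there is nothing further to compare.
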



\begin{lemma}\label{lem7}
Let $\Gamma$ be a digraph, $G$ be a group acting $s$-arc-transitively on $\Gamma$ with $s\geqslant2$, and $L$ be a normal subgroup of $G$. If $L$ acts transitively on the vertex set of $\Gamma$, then $L$ acts $(s-1)$-arc-transitively on $\Gamma$.
\end{lemma}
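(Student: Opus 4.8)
The plan is to isolate one key case and bootstrap from it. \emph{Key claim:} if a group $X$ acts $2$-arc-transitively on a digraph $\Delta$ and $Y\trianglelefteq X$ is transitive on the vertices of $\Delta$, then $Y$ is transitive on the arcs of $\Delta$. Granting this, Lemma~\ref{lem7} follows by induction on $j$ for $0\leqslant j\leqslant s-2$, using the $j$-th iterated arc digraph $\Gamma_j$: its vertex set is the set of $j$-arcs of $\Gamma$, with $(v_0,\dots,v_j)\rightarrow(v_1,\dots,v_{j+1})$ precisely when $(v_0,\dots,v_{j+1})$ is a $(j+1)$-arc of $\Gamma$. Antisymmetry and irreflexivity pass from $\Gamma$ to $\Gamma_j$ (a coincidence of a $j$-arc with one of its shifts would force a directed $2$-cycle in $\Gamma$), the action of $G$ on $\Gamma$ induces an action on $\Gamma_j$, and transitivity of a subgroup on the $k$-arcs of $\Gamma_j$ is exactly transitivity on the $(k+j)$-arcs of $\Gamma$. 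Since $G$ is vertex-transitive, $(G,s)$-arc-transitivity yields $(G,k)$-arc-transitivity for all $k\leqslant s$, so for $j\leqslant s-2$ the group $G$ is $2$-arc-transitive on $\Gamma_j$; and if $L$ is transitive on the $j$-arcs of $\Gamma$ then $L$ is vertex-transitive on $\Gamma_j$, so the Key claim applied to $L\trianglelefteq G$ acting on $\Gamma_j$ gives that $L$ is transitive on the $(j+1)$-arcs of $\Gamma$. Starting from the hypothesis (the case $j=0$) and running $j=0,1,\dots,s-2$ shows that $L$ is $(s-1)$-arc-transitive. (If $\Gamma$ has no arcs everything is vacuous, and no faithfulness of the actions is used anywhere.)

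To prove the Key claim I would first dispose of the case in which $\Delta$ has no arc (vacuous), so that vertex-transitivity of $X$ makes every vertex of $\Delta$ have positive in- and out-degree. Colour each arc of $\Delta$ by the $Y$-orbit containing it, and let $C$ be the set of colours. Since $Y\trianglelefteq X$ and $X$ is arc-transitive, $X$ acts transitively on $C$; and since a $Y$-orbit of arcs maps onto the whole vertex set under each of the tail map and the head map (the image of a $Y$-orbit under a $Y$-equivariant map is a $Y$-orbit, and $Y$ is vertex-transitive), every vertex is the tail of at least one arc of each colour and the head of at least one arc of each colour. Now send each $2$-arc $(a,b,c)$ of $\Delta$ to $(\mathrm{colour}(a,b),\mathrm{colour}(b,c))\in C\times C$; this map is equivariant for the diagonal action of $X$ on $C\times C$, so the image is a single $X$-orbit and hence lies either in the diagonal $\{(c,c):c\in C\}$ or entirely off it, these two sets being $X$-invariant. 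It cannot lie off the diagonal: pick a vertex $b$, an in-arc $(a,b)$ of some colour $\gamma$, and an out-arc $(b,c)$ of the same colour $\gamma$ (both exist); then $(a,b,c)$ is a $2$-arc with colour $(\gamma,\gamma)$.

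Hence $\mathrm{colour}(a,b)=\mathrm{colour}(b,c)$ for every $2$-arc of $\Delta$. Fixing a vertex $v$ and an in-neighbour $a$ of $v$, applying this to the $2$-arc $(a,v,c)$ shows that $\mathrm{colour}(v,c)=\mathrm{colour}(a,v)$ for every out-neighbour $c$ of $v$; so all out-arcs of $v$ share one colour $\chi(v)$, and $\chi(a)=\chi(b)$ whenever $a\rightarrow b$. As $Y$ is vertex-transitive and preserves colours, $\chi$ is constant on $V$, so $\Delta$ has a single arc-colour and $Y$ is arc-transitive, which is the Key claim. I expect the Key claim to be the main obstacle, and inside it the crux is the diagonal/off-diagonal dichotomy together with the fact that for a \emph{digraph} the triple $(a,b,c)$ above is automatically a $2$-arc (for undirected graphs one would need $a\neq c$, which can fail); correspondingly the analogous statement is false for undirected graphs, for instance the translation subgroup of $\Aut$ of the $3$-dimensional cube is a vertex-transitive normal subgroup of a $2$-arc-transitive group that is not arc-transitive, so the directedness of $\Gamma$ is used in an essential way.
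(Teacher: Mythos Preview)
Your proof is correct. The paper does not give its own argument here, stating only that it ``is along the same lines as those in~\cite{GX2018}'' (where the result is Corollary~2.11, stated for faithful actions), so a direct comparison is not possible. Your route---bootstrapping via the iterated arc digraphs $\Gamma_j$ to reduce everything to the Key claim, and then proving that claim by colouring arcs with $Y$-orbits and using the diagonal/off-diagonal dichotomy on $C\times C$---is clean and self-contained. The crux, as you correctly isolate, is that for a \emph{digraph} any triple $(a,b,c)$ with $a\to b\to c$ is automatically a $2$-arc (indeed antisymmetry even forces $c\neq a$), which is what makes the diagonal case unavoidable; your cube counterexample shows why the analogous statement fails for undirected graphs. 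You also rightly note that faithfulness is nowhere used, which is precisely the extension the paper is claiming over~\cite{GX2018}.
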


A digraph $(V,\rightarrow)$ is said to be \emph{$k$-regular} if both the set $\{u\in V\mid u\rightarrow v\}$ of in-neighbors of $v$ and the set $\{w\in V\mid v\rightarrow w\}$ of out-neighbors of $v$ have size $k$ for all $v\in V$.

\begin{lemma}\label{lem5}
For any vertex-primitive arc-transitive digraph $\Gamma$, either $\Gamma$ is a directed cycle of prime length or $\Gamma$ has valency at least $3$.
\end{lemma}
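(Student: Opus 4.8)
Here is the strategy I would follow.

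The plan is to observe first that, since $\Gamma$ is vertex-transitive and arc-transitive (and we may assume $\Gamma$ has at least one arc), counting arcs shows that $\Gamma$ is $k$-regular for some $k\geqslant1$, and then to treat the cases $k=1$ and $k=2$ separately. The case $k=1$ is routine: the map sending each vertex to its unique out-neighbour is a permutation $\sigma$ of the vertex set $V$ lying in $\Aut(\Gamma)$, so $\Gamma$ is the functional digraph of $\sigma$, that is, a disjoint union of directed cycles. As $\Aut(\Gamma)$ is transitive on the connected components, these cycles share a common length $m$, and $m\geqslant3$ by irreflexivity and antisymmetry; if there were more than one component the set of components would be a nontrivial $\Aut(\Gamma)$-invariant partition of $V$, contradicting vertex-primitivity. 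Hence $\Gamma$ is a single directed $m$-cycle, whose automorphism group is the cyclic group of order $m$ generated by a rotation, acting regularly; a regular cyclic group is primitive exactly when its order is prime, so $m$ is prime and $\Gamma$ is a directed cycle of prime length.

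The main obstacle is to rule out $k=2$, and I expect the key point to be a normality argument that is special to valency $2$. Suppose $k=2$, put $G=\Aut(\Gamma)$, fix an arc $v\rightarrow w$, and set $K=G_{vw}$. By arc-transitivity together with vertex-transitivity, $G_v$ is transitive on the $2$-element set of out-neighbours of $v$ and $G_w$ is transitive on the $2$-element set of in-neighbours of $w$; because each of these sets has size exactly $2$, the subgroup $K$ is precisely the kernel of each of these two actions, and therefore $K\trianglelefteq G_v$ and $K\trianglelefteq G_w$. Consequently $K\trianglelefteq\langle G_v,G_w\rangle$. Now $G_v$ is maximal in $G$ by vertex-primitivity, so $\langle G_v,G_w\rangle$ equals $G$ or $G_v$. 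If it equals $G_v$ then $G_w\leqslant G_v$, hence $G_w=G_v$ by order, so $G_v$ fixes both $v$ and $w$; the fixed-point set of $G_v$ on $V$ is then a block of $G$ of size at least $2$, which by primitivity must be all of $V$, forcing $G_v=1$ — impossible since $|G_v|\geqslant k=2$. Therefore $\langle G_v,G_w\rangle=G$ and $K\trianglelefteq G$; since $K$ fixes $v$ while a nontrivial normal subgroup of a primitive group is transitive, we conclude $K=1$, and hence $|G_v|=2$.

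It remains to derive a contradiction from $|G_v|=2$. Write $G_v=\langle t\rangle$ with $t$ an involution. As $G_v$ is maximal, $\Nor_G(\langle t\rangle)$ is $G$ or $\langle t\rangle$, and it is not $G$ (otherwise $\langle t\rangle\trianglelefteq G$ would be transitive by primitivity, which is absurd), so $\Cen_G(t)=\Nor_G(\langle t\rangle)=\langle t\rangle$. A short Sylow argument then shows that a Sylow $2$-subgroup of $G$ equals $\langle t\rangle$, so Burnside's normal $2$-complement theorem gives $G=N\rtimes\langle t\rangle$ with $N\trianglelefteq G$ of odd order. By primitivity $N$ is transitive, hence regular of degree $|V|$, and $\Cen_N(t)=1$, so the involution $t$ inverts $N$; thus $N$ is abelian and every subgroup of $N$ is $t$-invariant. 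Maximality of $\langle t\rangle$ then forces $N$ to have no proper nontrivial subgroup, so $N\cong\C_p$ for a prime $p$ and $G\cong\D_{2p}$. Identifying $V$ with $N$ via the regular action, $\Gamma$ becomes a Cayley digraph $\Cay(\C_p,S)$ with $|S|=2$ on which $t$ acts as inversion fixing the base point; hence $S=-S$, so $\Gamma$ is symmetric, contradicting the assumption that $\Gamma$ is a digraph. This contradiction disposes of the case $k=2$ and completes the argument.
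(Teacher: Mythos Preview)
Your argument is correct and reaches the same endpoint as the paper, but by a different and more self-contained route. The paper disposes of valency~$2$ in two lines by citing Neumann's theorem~\cite[Theorem~5]{Neumann1973}, which asserts that a primitive group with a suborbit of length~$2$ is dihedral of order twice a prime, and then observes that all suborbits of such a group are self-paired. You instead reprove exactly this special case from scratch: the normality argument $K=G_{vw}\trianglelefteq\langle G_v,G_w\rangle=G$ gives $|G_v|=2$, then the self-normalising involution together with Burnside's transfer theorem yields a regular abelian normal complement $N$ inverted by $t$, forcing $N\cong\C_p$ and $G\cong\D_{2p}$; finally the Cayley-digraph description shows the connection set is inverse-closed. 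Your treatment of the valency~$1$ case is also spelled out more fully than in the paper, which simply asserts that a vertex-primitive digraph that is not a directed prime cycle has valency at least~$2$.

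The trade-off is clear: the paper's proof is shorter but imports a nontrivial external result, whereas yours is longer but elementary and entirely self-contained. One minor remark: where you write ``Maximality of $\langle t\rangle$ then forces $N$ to have no proper nontrivial subgroup'', the cleanest justification is that any such subgroup would be normal in $G$ (being $t$-invariant) and hence transitive by primitivity, which is impossible by order; your phrasing via the intermediate subgroup $M\langle t\rangle$ also works.
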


\begin{proof}
Suppose $\Gamma$ is a $G$-arc-transitive digraph such that $\Gamma$ is not a directed cycle of prime length. Then $\Gamma$ has valency at least $2$. If $\Gamma$ has valency $2$, then by~\cite[Theorem~5]{Neumann1973}, $G$ is a dihedral group of order twice a prime. However, in this case all suborbits of $G$ are self-paired, contradicting $\Gamma$ being a digraph.
\end{proof}

We close this subsection with an observation that will be used repeatedly throughout the paper.

\begin{lemma}\label{lem26}
Let $\Gamma$ be a connected $G$-arc-transitive digraph with arc $v\rightarrow w$. Let $g\in G$ such that $v^g=w$. Then each nontrivial normal subgroup of $G_v$ is not normalized by $g$.
\end{lemma}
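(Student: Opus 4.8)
The plan is to argue by contradiction using the connectedness of $\Gamma$ together with the arc-transitivity of $G$. Suppose $N$ is a nontrivial normal subgroup of $G_v$ that is normalized by $g$, where $g\in G$ satisfies $v^g=w$. Since $v\rightarrow w$ is an arc and $g\in G$, applying powers of $g$ to this arc produces the sequence $v, v^g=w, v^{g^2}, v^{g^3},\dots$, all of which are vertices of $\Gamma$; more importantly, $N^{g^i}=N$ for every integer $i$ because $g$ normalizes $N$, so $N$ fixes $v^{g^i}$ for every $i$ (as $N$ fixes $v$ and $N=N^{g^{-i}}$ fixes $v$ forces $N$ to fix $v^{g^i}$). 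In other words, $N$ lies in the stabilizer in $G$ of every vertex in the orbit $v^{\langle g\rangle}$.

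Next I would leverage connectedness to propagate this fixing property to the whole vertex set. The key observation is that the out-neighbours of $v$ form a single $G_v$-orbit (by arc-transitivity, $G_v$ is transitive on the out-neighbours of $v$), and $w$ is one such out-neighbour with $v^g=w$. Since $N\trianglelefteq G_v$ and $N$ fixes $w$, for any out-neighbour $w'=w^h$ with $h\in G_v$ we get that $N=N^h$ fixes $w^h=w'$; hence $N$ fixes every out-neighbour of $v$. An entirely symmetric argument (using that $g^{-1}$ maps $w$ to $v$, and in-neighbour transitivity, or simply iterating) shows $N$ fixes every in-neighbour of $v$ as well. Now, because $\Gamma$ is connected, every vertex is reachable from $v$ by a path in the underlying undirected graph; I would set up an induction on the distance from $v$ in this underlying graph, at each step using that if $N$ fixes a vertex $u$ then (since $N$ is normalized by $g$, hence $N^{g^j}=N$, and more generally $N$ is seen to be normal in a sufficiently large subgroup — this is the point that needs care) $N$ fixes all neighbours of $u$.

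The main obstacle is exactly this last propagation step: fixing $v$ and being normal in $G_v$ gives control of $N$ on the neighbourhood of $v$, but to move to a neighbour $u$ of $v$ I need to know that $N$ is normal in $G_u$, or at least that $N\cap G_u$ is still large enough and suitably normal. This is where the hypothesis that $g$ normalizes $N$ does the essential work: one shows that $N$ is in fact normalized by the group $\langle G_v, g\rangle$, and since $\Gamma$ is connected and $G$ is arc-transitive with $v^g=w$, this group is all of $G$ (a standard fact: for a connected $G$-arc-transitive digraph, $G=\langle G_v,g\rangle$ whenever $v^g$ is an out-neighbour of $v$). Therefore $N\trianglelefteq G$. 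But then $N$ fixes $v$ and is normal in the vertex-transitive group $G$, so $N$ fixes every vertex, i.e.\ $N$ lies in the kernel of the action of $G$ on $V$. In the setting of Lemma~\ref{lem26} this kernel is trivial (or, if one allows unfaithful actions as in Lemma~\ref{lem15}, one phrases the conclusion as $N$ lying in the kernel), contradicting $N\neq1$. I would present the argument in the clean order: (1) $g$ normalizes $N$ and $G_v$ normalizes $N$; (2) connectedness plus arc-transitivity give $G=\langle G_v,g\rangle$; (3) hence $N\trianglelefteq G$; (4) $N$ fixes $v$ and $G$ is transitive on vertices, so $N$ is trivial on $V$; (5) contradiction with $N\neq1$.
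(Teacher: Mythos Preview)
Your final clean argument (1)--(5) is exactly the paper's proof: since $N\trianglelefteq G_v$ and $g$ normalizes $N$, we have $N\trianglelefteq\langle G_v,g\rangle=G$ by connectedness, whence $N$ lies in the kernel of the faithful action on vertices, contradicting $N\neq1$. The preceding paragraphs about orbits of $\langle g\rangle$ and neighbourhood propagation are unnecessary detours---you can delete them and present only the five-step argument.
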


\begin{proof}
Suppose that $N$ is a nontrivial normal subgroup of $G_v$ that is normalized by $g$. Then $N$ is normal in $\langle G_v,g\rangle$. Since $\Gamma$ is connected, $\langle G_v,g\rangle=G$. Hence $N$ is normal in $G$, which implies that $N\leqslant\bigcap_{h\in G}G_v^h$. However, $G$ acts faithfully on the vertex set of $\Gamma$ and so $\bigcap_{h\in G}G_v^h=1$, a contradiction.
\end{proof}

\subsection{{Subgroup structure}}

The maximal subgroups of almost simple groups with socle $\PSL_n(q)$ are divided into nine classes $\calC_1$, $\calC_2$, \ldots, $\calC_9$ by Aschbacher's theorem~\cite{Aschbacher1984}. The maximal subgroups in classes $\calC_1$--$\calC_8$ are described in~\cite[Chapter~4]{KL1990} and summarized in~\cite[Table~3.5.A]{KL1990}. Each class is defined by providing some geometric structure that it preserves. The maximal subgroups in class $\calC_9$ (denoted by $\calS$ in~\cite{KL1990}) arise from irreducible representations of quasisimple groups. We call a maximal subgroup a \emph{$\calC_i$-subgroup} if it lies in class $\calC_i$, where $i\in\{1,2,\dots,9\}$.

\subsection{Computational methods}

We will use \magma~\cite{magma} to do computations in some relatively small groups, mainly to search for group factorizations with certain properties. By virtue of part~(c) of Lemma~\ref{Factorization}, we may only consider one representative in a conjugacy class of subgroups as a potential factor of a group factorization. Given a group $G$ and its subgroups $H$ and $K$, to inspect whether $G=HK$ holds we only need to compute the orders of $G$, $H$, $K$ and $H\cap K$ by part~(d) of Lemma~\ref{Factorization}.

Let us illustrate this with an example of computation in the proof of Lemma~\ref{lem14}, where we need to show that for the $\calC_2$-subgroup $G$ of type $\GL_2(3)\wr\Sy_6$ in $\SL_{12}(3)$ there is no homogeneous factorization $G=HK$ with $|H|$ divisible by $2^{15}\cdot3^6\cdot5$ and $H/\Rad(H)\cong K/\Rad(K)\cong\Sy_5$. Note that the group $G$ can be accessed from the list produced by the \magma command \textsf{ClassicalMaximals("L",12,3:classes:=\{2\})}. Create a list of subgroups representing all the conjugacy classes of subgroups of $G$ with order divisible by $2^{15}\cdot3^6\cdot5$. For each pair $(H,K)$ of subgroups from this list, calculate $|H|$, $|K|$ and $|H\cap K|$. Then all those pairs $(H,K)$ with $|H\cap K||G|=|H||K|$ give precisely the factorizations $G=HK$, up to conjugacy of $H$ and $K$ in $G$, such that both $|H|$ and $|K|$ are divisible by $2^{15}\cdot3^6\cdot5$. It turns out that there is no such pair $(H,K)$ satisfying $H\cong K$ and $H/\Rad(H)\cong K/\Rad(K)\cong\Sy_5$. This shows that there is no homogeneous factorization $G=HK$ with $|H|$ divisible by $2^{15}\cdot3^6\cdot5$ and $H/\Rad(H)\cong K/\Rad(K)\cong\Sy_5$.

\section{Homogeneous factorizations}\label{sec3}

From Lemma~\ref{lem15} we see that the $s$-arc-transitivity of digraphs can be characterized by group factorizations. If a group $G$ acts $s$-arc-transitively on a digraph $\Gamma$ with $s\geqslant2$ and $v_0\rightarrow v_1\rightarrow\dots\rightarrow v_{s-1}\rightarrow v_s$ is an $s$-arc of $\Gamma$, then Lemma~\ref{lem15} implies that $G_{v_1\dots v_i}=G_{v_0v_1\dots v_i}G_{v_1\dots v_iv_{i+1}}$ for all $1\leqslant i\leqslant s-1$. In addition, since $G$ acts $i$-arc-transitively on $\Gamma$, the two factors $G_{v_0v_1\dots v_i}$ and $G_{v_1\dots v_iv_{i+1}}$ are conjugate in $G$ and hence isomorphic. This motivates the following definition:

\begin{definition}
A factorization $G=AB$ is called a \emph{homogeneous factorization} of $G$ if $A\cong B$.
\end{definition}

The next lemma shows that the orders of the factors of a homogeneous factorization are necessarily divisible by all the prime factors of the factorized group.

\begin{lemma}\label{lem2}
For any homogeneous factorization $G=AB$ we have $\pi(A)=\pi(B)=\pi(G)$.
\end{lemma}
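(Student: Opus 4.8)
The plan is to show the two reverse-containment facts $\pi(G)\subseteq\pi(A)$ and $\pi(G)\subseteq\pi(B)$; the containments $\pi(A)\subseteq\pi(G)$ and $\pi(B)\subseteq\pi(G)$ are automatic since $A$ and $B$ are subgroups of $G$, and $\pi(A)=\pi(B)$ follows at once from the isomorphism $A\cong B$. So it suffices to prove $\pi(G)\subseteq\pi(A)$, and then apply this to both factors.

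First I would take an arbitrary prime $r\in\pi(G)$, so $r$ divides $|G|$. From the factorization $G=AB$ and Lemma~\ref{Factorization}(d) we have $|G|\,|A\cap B|=|A|\,|B|$, hence $|G|$ divides $|A|\,|B|$. Therefore $r$ divides $|A|\,|B|$, and since $r$ is prime it divides $|A|$ or $|B|$; that is, $r\in\pi(A)\cup\pi(B)$. The point now is that $\pi(A)=\pi(B)$ because $A\cong B$, so $\pi(A)\cup\pi(B)=\pi(A)=\pi(B)$, and therefore $r\in\pi(A)$ (and also $r\in\pi(B)$). As $r$ was an arbitrary element of $\pi(G)$, this gives $\pi(G)\subseteq\pi(A)=\pi(B)$. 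Combining with the trivial inclusions $\pi(A),\pi(B)\subseteq\pi(G)$ yields $\pi(A)=\pi(B)=\pi(G)$.

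This argument is entirely elementary and there is no real obstacle; the only thing to be careful about is invoking the isomorphism $A\cong B$ at exactly the right moment, namely to collapse the union $\pi(A)\cup\pi(B)$ to a single set before concluding. One could alternatively phrase the divisibility step using Lemma~\ref{Factorization}(e): $A$ acts transitively on the right cosets of $B$ in $G$, so $|G|/|B|=|G:B|$ divides $|A|$, whence every prime divisor of $|G:B|$ lies in $\pi(A)$; together with $\pi(B)\subseteq\pi(A)$ (from $A\cong B$) this again gives $\pi(G)\subseteq\pi(A)$. Either formulation is short, and I would present the first one for brevity.
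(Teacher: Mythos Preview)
Your proof is correct and follows essentially the same approach as the paper: both use Lemma~\ref{Factorization}(d) to get that $|G|$ divides $|A||B|$, then invoke $A\cong B$ to conclude $\pi(G)\subseteq\pi(A)=\pi(B)$, with the reverse inclusions being trivial. The paper phrases the key step slightly more compactly by noting $|G|$ divides $|A|^2=|B|^2$ rather than working prime by prime, but this is a cosmetic difference only.
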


\begin{proof}
Due to $G=AB$, Lemma~\ref{Factorization}(d) implies that $|G|$ divides $|A||B|$. Moreover, $|A|=|B|$ as $A\cong B$. Hence $|G|$ divides $|A|^2=|B|^2$, which implies that $\pi(G)\subseteq\pi(A)=\pi(B)$. Since $A$ and $B$ are both subgroups of $G$, we also have the reverse containment and so $\pi(A)=\pi(B)=\pi(G)$.
\end{proof}

The following proposition determines the homogeneous factorizations of almost simple groups.

\begin{proposition}\label{Homogeneous}
Let $G$ be an almost simple group with socle $T$. Suppose $G=AB$ with isomorphic subgroups $A$ and $B$ of $G$. Then one of the following holds:
\begin{itemize}
\item[(a)] Both $A$ and $B$ contain $T$.
\item[(b)] $A$ and $B$ are almost simple groups with socles both isomorphic to $S$, where $(T,S)$ lies in Table~$\ref{tab1}$.
\end{itemize}
\end{proposition}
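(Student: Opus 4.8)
The plan is to combine Lemma~\ref{lem2} with the classification of maximal factorizations of almost simple groups due to Liebeck--Praeger--Saxl~\cite{LPS1990,LPS1996}, and then to decide, case by case, which maximal factorizations admit a homogeneous ``refinement''. First I would dispose of the trivial half of the dichotomy: if one of $A$, $B$ contains $T$ then so does the other. Indeed $A\cong B$ forces $|A|=|B|$, and if $T\leqslant A$ then $\Soc(A)=T$, so $\Soc(B)\cong T$; now $BT/T\cong B/(B\cap T)$ is a subgroup of the soluble group $G/T$, whence $B^{(\infty)}\leqslant B\cap T\leqslant T$, and since $\Soc(B)\cong T$ is perfect we get $\Soc(B)=\Soc(B)^{(\infty)}\leqslant B^{(\infty)}\leqslant T$, so $\Soc(B)=T$ by comparing orders. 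Thus case~(a) holds precisely when one of the factors contains $T$, and from now on I assume neither does.

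Assuming $A$ and $B$ are core-free, choose maximal subgroups $M\geqslant A$ and $K\geqslant B$; then $G=AB\leqslant MK\leqslant G$, so $G=MK$ is a factorization into maximal subgroups. If $M$ or $K$ contains $T$, then $G=MK$ is lifted from a factorization of the (soluble) quotient $G/T$ acting primitively on a coset space, and a short direct argument using $\pi(A)=\pi(T)$ — which severely restricts $A\cap T$ as a subgroup of $T$ — disposes of this degenerate possibility; so I may assume $M$ and $K$ are both core-free, and that $G=MK$ occurs in the tables of~\cite{LPS1990}, with the exact factorizations recorded in~\cite{LPS1996}. By Lemma~\ref{lem2}, $\pi(G)=\pi(A)=\pi(B)\subseteq\pi(M)\cap\pi(K)$, so only those maximal factorizations in which \emph{both} factors have order divisible by every prime in $\pi(G)$ survive; scanning the tables of~\cite{LPS1990} on this criterion eliminates the overwhelming majority of entries.

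For each surviving pair $(M,K)$ the task is to determine all isomorphic pairs $A\leqslant M$, $B\leqslant K$ with $G=AB$. The main levers are $|A|=|B|$, the identity $|A||B|=|G||A\cap B|\geqslant|G|$ (so $|A|\geqslant|G|^{1/2}$), and $\pi(A)=\pi(B)=\pi(G)$; these feed directly into the specialized factorization lemmas already recorded — Lemmas~\ref{lem1},~\ref{lem4} and~\ref{lem10} when $\Soc(G)=\A_n$, Lemmas~\ref{lem20} and~\ref{lem19} when $\Soc(G)=\PSL_n(q)$, and Lemma~\ref{lem24} when $\Soc(G)=\PSp_{2m}(p)$ — while the finitely many genuinely small configurations are settled with \magma~\cite{magma}. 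For instance, a primitive prime divisor $r\in\ppd(p,nf)$ lies in $\pi(A)\cap\pi(B)$, so Lemma~\ref{lem20} immediately returns case~(a) for linear socles outside a short list of exceptions. In each remaining case one shows either that no homogeneous refinement exists, or that $A$ and $B$ are forced to be almost simple with a common socle $S$ with $(T,S)$ in Table~\ref{tab1}; for those entries the homogeneous factorization is exhibited explicitly (e.g. $\A_6=\A_5\A_5$ using the two classes of $\A_5$, or $\mathrm{M}_{12}=\mathrm{M}_{11}\mathrm{M}_{11}$).

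The principal obstacle is exactly this last step. Running through the maximal-factorization tables is mechanical, but for each admissible $(M,K)$ one must decide whether two \emph{abstractly isomorphic} subgroups — not assumed maximal, and pinned down only by order and by the set of prime divisors — can be found inside $M$ and $K$ with product $G$. Because the constraint on $A$ and $B$ is isomorphism rather than conjugacy, one cannot argue purely with conjugacy classes of maximal subgroups; instead one must descend into the detailed subgroup structure of the relevant classical, exceptional and sporadic groups, both to rule out the spurious cases and to confirm that the examples assembled in Table~\ref{tab1} are genuine homogeneous factorizations and that the list is complete.
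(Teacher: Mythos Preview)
Your handling of case~(a) is essentially the same as the paper's. For case~(b), however, the paper takes a much shorter route: having established via Lemma~\ref{lem2} that $\pi(T)\subseteq\pi(A)=\pi(B)$, it simply invokes \cite[Theorem~1.1]{BP1998} (Baddeley--Praeger), which classifies precisely the factorizations $G=AB$ of almost simple groups in which both factors have order divisible by every prime in $\pi(T)$ --- so-called \emph{full factorizations}. Reading off Table~I of that paper and imposing $A\cong B$ yields Table~\ref{tab1} directly.

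Your proposal is not wrong, but it amounts to reproving the Baddeley--Praeger classification from the Liebeck--Praeger--Saxl tables. That is a substantial case analysis covering all simple socles, not just the linear and symplectic ones handled by Lemmas~\ref{lem20}, \ref{lem19} and~\ref{lem24}; you would also need analogues for unitary, orthogonal, exceptional and sporadic groups, and you correctly flag the descent from maximal factors $M,K$ to isomorphic subgroups $A,B$ as the hard step. The paper sidesteps all of this by citing a result tailored exactly to the $\pi$-condition that Lemma~\ref{lem2} produces. If you were unaware of \cite{BP1998}, your outline is a reasonable plan of attack, but the existing literature makes it unnecessary.
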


\begin{table}[htbp]
\caption{The pair $(T,S)$ in Proposition~\ref{Homogeneous}(b)}\label{tab1}
\centering
\begin{tabular}{|l|l|l|}
\hline
row & $T$ & $S$\\
\hline
1 & $\A_6$ & $\A_5$\\
2 & $\M_{12}$ & $\M_{11}$\\
3 & $\Sp_4(2^f)$, $f\geqslant2$ & $\Sp_2(4^f)$\\
4 & $\POm_8^+(q)$ & $\Omega_7(q)$\\
\hline
\end{tabular}
\end{table}

\begin{proof}
First assume that at least one of $A$ or $B$, say $A$, contains $T$. Then $A$ is an almost simple group with socle $T$. Since $A\cong B$, $B$ is an almost simple group with socle isomorphic to $T$. As $B\cap T$ is normal in $B$, we thereby derive that either $B\cap T=1$ or $B\cap T\geqslant\Soc(B)\cong T$. If $B\cap T=1$, then $B\cong BT/T\leqslant G/T$, which is not possible since $G/T$ is soluble by the Schreier Conjecture. Thus $B\cap T\geqslant\Soc(B)\cong T$. This yields $T\leqslant B$, so both $A$ and $B$ contain $T$, as described in part~(a) of the proposition.

Next assume that neither $A$ nor $B$ contains $T$. According to Lemma~\ref{lem2} we have $\pi(G)=\pi(A)=\pi(B)$, and so $\pi(T)\subseteq\pi(A)=\pi(B)$. Hence by~\cite[Theorem~1.1]{BP1998}, $(T,G,A\cap T,B\cap T)$ lies in Table~I of~\cite{BP1998}. Then we conclude that $A\cong B$ is almost simple with socle $S$ such that $(T,S)$ lies in Table~\ref{tab1}.
\end{proof}

\begin{corollary}\label{AS}
Let $\Gamma$ be a connected $(G,2)$-arc-transitive digraph, and let $u\rightarrow v$ be an arc of $\Gamma$. Suppose that $G_v$ is almost simple. Then $G_{uv}$ is almost simple with $(\Soc(G_v),\Soc(G_{uv}))=(T,S)$ in Table~$\ref{tab1}$, and $\Gamma$ is not $(G,3)$-arc-transitive.
\end{corollary}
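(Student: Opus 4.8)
The plan is to deduce both assertions from Proposition~\ref{Homogeneous}, combined with Lemma~\ref{lem15} and Lemma~\ref{lem26}. First I would extend the given arc $u\rightarrow v$ to a $2$-arc $u\rightarrow v\rightarrow w$. Lemma~\ref{lem15} then yields $G_v=G_{uv}G_{vw}$, and choosing $g\in G$ with $(u,v)^g=(v,w)$ --- possible since $\Gamma$ is arc-transitive --- we get $G_{uv}^g=G_{vw}$, so $G_v=G_{uv}G_{vw}$ is a homogeneous factorization of the almost simple group $G_v$. Set $T=\Soc(G_v)$ and apply Proposition~\ref{Homogeneous}: there are two possibilities. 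In case~(a) both $G_{uv}$ and $G_{vw}$ contain $T$; since each of them lies between $\Inn(T)\cong T$ and $G_v\leqslant\Aut(T)$, each is almost simple with socle exactly $T$, whence $T^g=\Soc(G_{uv})^g=\Soc(G_{vw})=T$, so $g$ normalizes the nontrivial normal subgroup $T$ of $G_v$, contradicting Lemma~\ref{lem26}. Hence case~(b) holds: $G_{uv}\cong G_{vw}$ is almost simple with socle $S$, and $(T,S)=(\Soc(G_v),\Soc(G_{uv}))$ appears in Table~\ref{tab1}, which is the first assertion.

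For the second assertion I would argue by contradiction: suppose $\Gamma$ is $(G,3)$-arc-transitive and take a $3$-arc $u\rightarrow v\rightarrow w\rightarrow x$. By Lemma~\ref{lem15} (with $s=3$ and $i=2$) we have $G_{vw}=G_{uvw}G_{vwx}$, and taking $h\in G$ with $(u,v,w)^h=(v,w,x)$ --- possible since $G$ is $2$-arc-transitive --- gives $G_{uvw}^h=G_{vwx}$, so this is a homogeneous factorization of $G_{vw}$. By the first assertion $G_{vw}$ is almost simple, and $S_1:=\Soc(G_{vw})$ is isomorphic to one of $\A_5$, $\M_{11}$, $\PSL_2(4^f)$, $\Omega_7(q)$ (recalling $\Sp_2(4^f)=\PSL_2(4^f)$). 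Now apply Proposition~\ref{Homogeneous} to $G_{vw}=G_{uvw}G_{vwx}$. Case~(b) of that proposition would force $S_1$ into the first column of Table~\ref{tab1}, i.e.\ $S_1\cong\A_6$, $\M_{12}$, $\Sp_4(2^{f'})$ or $\POm_8^+(q')$; a routine comparison of orders (bearing in mind coincidences such as $\A_5\cong\PSL_2(4)\cong\PSL_2(5)$ and $\A_6\cong\PSL_2(9)\cong\Sp_4(2)'$) rules this out. So case~(a) holds, and both $G_{uvw}$ and $G_{vwx}$ contain $S_1$.

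To finish I would use the chain $S_1\leqslant G_{uvw}\leqslant G_{uv}$. Since $G_{uv}$ is almost simple with socle $S_0:=\Soc(G_{uv})\cong S_1$ and $S_1$ is a simple subgroup of $G_{uv}$, either $S_1\cap S_0=1$ or $S_1\cap S_0=S_1$; the former is impossible, as then $S_1S_0\leqslant G_{uv}$ would have order $|S_1|^2>|\Aut(S_0)|\geqslant|G_{uv}|$, using the standard fact that $|\Out(S_0)|<|S_0|$. Hence $S_0=\Soc(G_{uv})=\Soc(G_{vw})=S_1$. This common socle is normalized by both $G_{uv}$ and $G_{vw}$, hence by $G_{uv}G_{vw}=G_v$, so it is a nontrivial normal subgroup of $G_v$; and for the element $h$ above we have $v^h=w$ and $G_{uv}^h=G_{vw}$ (as $(u,v)^h=(v,w)$), so $S_1^h=\Soc(G_{uv})^h=\Soc(G_{vw})=S_1$. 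Thus $h$ normalizes $S_1$, contradicting Lemma~\ref{lem26}, and this contradiction completes the proof.

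The main obstacle I anticipate is precisely this last mechanism, which has no analogue in the $2$-arc case: in the $2$-arc setting the socles of $G_{uv}$ and $G_{vw}$ need not coincide (they are $S$ and $S^g$), so no contradiction arises, whereas in the $3$-arc setting case~(a) of the second application of Proposition~\ref{Homogeneous} forces $\Soc(G_{vw})\leqslant G_{uvw}\leqslant G_{uv}$ and hence $\Soc(G_{uv})=\Soc(G_{vw})$; it is exactly this common socle, normal in $G_v$ and fixed setwise by $h$, that Lemma~\ref{lem26} forbids. A secondary point needing care is the finite verification that none of the four groups arising as $S$ in Table~\ref{tab1} is isomorphic to any of the four groups arising as $T$ there.
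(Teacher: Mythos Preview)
Your proof is correct. The first assertion is handled exactly as in the paper: apply Proposition~\ref{Homogeneous} to the homogeneous factorization $G_v=G_{uv}G_{vw}$, and eliminate case~(a) via Lemma~\ref{lem26}. For the second assertion both you and the paper apply Proposition~\ref{Homogeneous} a second time (to $G_{vw}$, respectively $G_{uv}$) and rule out case~(b) by checking that no group in the $S$-column of Table~\ref{tab1} occurs in the $T$-column. The finishing arguments for case~(a), however, diverge. The paper observes that once $S\leqslant G_{uvw}$ one has that the valency $|G_v|/|G_{uv}|=|G_{uv}|/|G_{uvw}|$ divides $|\Out(S)|$, and then checks directly from Table~\ref{tab1} that $|G_v|/|G_{uv}|$ is too large to divide $|\Out(S)|$. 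Your argument is more structural: from $S_1=\Soc(G_{vw})\leqslant G_{uvw}\leqslant G_{uv}$ and the fact that $G_{uv}$ is almost simple with socle of the same order you deduce $\Soc(G_{uv})=\Soc(G_{vw})$ as subgroups, whence this common socle is normal in $G_v=G_{uv}G_{vw}$ and is normalized by your element $h$ (which sends $v$ to $w$), contradicting Lemma~\ref{lem26}. The paper's route is a short numerical check; yours avoids that check at the cost of one extra structural step, and has the pleasant feature of reusing exactly the same Lemma~\ref{lem26} mechanism that disposed of case~(a) at the first level.
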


\begin{proof}
Since $\Gamma$ is connected and $G$-arc-transitive, there exists $g\in G$ such that $u^g=v$ and $\langle G_v,g\rangle=G$. Let $w=v^g$. Then $u\rightarrow v\rightarrow w$ is a $2$-arc of $\Gamma$, and so by Lemma~\ref{lem15}, $G_v=G_{uv}G_{vw}$ since $\Gamma$ is $(G,2)$-arc-transitive. Moreover, $G_{uv}^g=G_{u^g v^g}=G_{vw}$. Thus, appealing to Proposition~\ref{Homogeneous} we obtain that either both $G_{uv}$ and $G_{vw}$ contain $\Soc(G_v)$, or $G_{uv}$ is almost simple with $(\Soc(G_v),\Soc(G_{uv}))=(T,S)$ in Table~\ref{tab1}.

Suppose that both $G_{uv}$ and $G_{vw}$ contain $\Soc(G_v)$. Then $\Soc(G_{uv})=\Soc(G_{vw})=\Soc(G_v)$, and so
\[
\Soc(G_v)^g=\Soc(G_{uv})^g=\Soc(G_{uv}^g)=\Soc(G_{vw})=\Soc(G_v).
\]
Since $\Soc(G_v)$ is normal in $G_v$, this contradicts Lemma~\ref{lem26}. Thus $G_{uv}$ is almost simple and $(\Soc(G_v),\Soc(G_{uv}))=(T,S)$ lies in Table~\ref{tab1}.

Suppose that $\Gamma$ is $(G,3)$-arc-transitive. Then  by Lemma~\ref{lem15}, $G_{uv}$ has a homogeneous factorization with factors isomorphic to $G_{uvw}$. Now $\Soc(G_{uv})=S$ as given in Table~\ref{tab1}. Since none of the possibilities for $S$ also appear as possibilities for $T$ in Table~\ref{tab1}, Proposition~\ref{Homogeneous} implies that $S=\Soc(G_{uv})\leqslant G_{uvw}$. It follows that $|G_v|/|G_{uv}|=|G_{uv}|/|G_{uvw}|$ divides $|\Out(S)|$. However, as $(\Soc(G_v),\Soc(G_{uv}))=(T,S)$ lies in Table~\ref{tab1}, we see that $|G_v|/|G_{uv}|$ cannot divide $|\Out(S)|$. This contradiction shows that $\Gamma$ is not $(G,3)$-arc-transitive, completing the proof.
\end{proof}

In the next lemma we study homogeneous factorizations of wreath products.

\begin{lemma}\label{lem3}
Let $R\wr\Sy_k$ be a wreath product with base group $M=R_1\times\dots\times R_k$, where $R_1\cong\cdots\cong R_k\cong R$, and $T\wr\Sy_k\leqslant G\leqslant R\wr\Sy_k$ with $T\leqslant R$. Suppose $G=AB$ is a homogeneous factorization of $G$ such that $A$ is transitive on $\{R_1,\dots,R_k\}$. Then with $\varphi_i(A\cap M)$ being the projection of $A\cap M$ to $R_i$, we have $\varphi_1(A\cap M)\cong\cdots\cong\varphi_k(A\cap M)$ and $\pi(T)\subseteq\pi(\varphi_1(A\cap M))$.
\end{lemma}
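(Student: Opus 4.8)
The plan is to exploit the transitivity of $A$ on the set of direct factors $\{R_1,\dots,R_k\}$ together with Lemma~\ref{lem2} applied to a suitable quotient. First I would observe that $A\cap M$ is normalized by $A$, since $M$ is normal in $G$; hence $A$ permutes the projections $\varphi_1(A\cap M),\dots,\varphi_k(A\cap M)$ in a manner compatible with its action on $\{R_1,\dots,R_k\}$. Concretely, if $a\in A$ induces the permutation $\sigma\in\Sy_k$ on the index set (via the natural map $G\to\Sy_k$), then conjugation by $a$ carries $\varphi_i(A\cap M)$ isomorphically onto $\varphi_{i\sigma}(A\cap M)$. Since $A$ is transitive on $\{R_1,\dots,R_k\}$, all of these projections are isomorphic, giving $\varphi_1(A\cap M)\cong\cdots\cong\varphi_k(A\cap M)$.

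For the second assertion I would pass to the quotient. Consider the subgroup $N=\ker(G\to\Sy_k)\cap\cdots$; more precisely, fix the index $1$ and look at the stabilizer $G_1$ of the point $1$ in the action on $\{R_1,\dots,R_k\}$, together with the projection $\rho\colon G_1\to R_1$. Actually the cleanest route is: from $G=AB$ and Lemma~\ref{Factorization} we get that $B$ acts transitively on the right cosets of $A$ in $G$, but a more direct approach is to intersect the factorization with $M$. Since $M\trianglelefteq G$ and $G=AB$, a standard argument gives $M=(A\cap M)(B\cap M)$ — indeed any $m\in M$ writes as $m=ab$ with $a\in A$, $b\in B$, and then $a=mb^{-1}\in M$ forces $a\in A\cap M$ and $b\in B\cap M$. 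Projecting to $R_1$ yields $\varphi_1(M)=\varphi_1(A\cap M)\varphi_1(B\cap M)$, i.e. $R=\varphi_1(A\cap M)\varphi_1(B\cap M)$, a factorization of $R$.

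Now I would bring in the homogeneity. Since $G=AB$ is a homogeneous factorization, $A\cong B$, and so by the first part applied to $B$ in place of $A$ — note $B$ need not be transitive on $\{R_1,\dots,R_k\}$, so here I must be more careful. The key point is that $|A|=|B|$ forces, via $|G|=|A||B|/|A\cap B|$ and a counting of the contributions along the base group, that $\pi(R)\subseteq\pi(\varphi_1(A\cap M))$: from $R=\varphi_1(A\cap M)\varphi_1(B\cap M)$ we have $\pi(R)=\pi(\varphi_1(A\cap M))\cup\pi(\varphi_1(B\cap M))$, and I would argue that any prime $r\in\pi(R)$ not dividing $|\varphi_1(A\cap M)|$ would have to divide $|\varphi_1(B\cap M)|$ with multiplicity compensating for all $k$ copies, which together with $|A|=|B|$ and the transitivity of $A$ (forcing $|A\cap M|=\prod_i|\varphi_i(A\cap M)|^{?}$-type estimates, really $|A\cap M|$ divisible by a high power of $|\varphi_1(A\cap M)|$ via the transitive permutation action) leads to a contradiction on $r$-parts. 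Since $T\leqslant R$ gives $\pi(T)\subseteq\pi(R)$, this yields $\pi(T)\subseteq\pi(\varphi_1(A\cap M))$.

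The main obstacle I anticipate is the $r$-part bookkeeping in the last step: one has to combine $|A|=|B|$, the divisibility $|R|^{?} \mid |M| = |A\cap M||B\cap M|$-flavoured relations, and the fact that transitivity of $A$ on the $k$ coordinates forces $A\cap M$ to have a large $r$-part whenever $r\mid |\varphi_1(A\cap M)|$ — essentially because the subgroup of $A\cap M$ generated by an $A$-orbit of an $r$-element surjecting onto $R_1$ spreads across all $k$ coordinates. Making this precise without overcounting is the delicate part; everything else is the routine normality-and-projection argument sketched above.
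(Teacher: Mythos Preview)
Your first paragraph is fine: transitivity of $A$ on the coordinates indeed conjugates each projection to any other, giving $\varphi_1(A\cap M)\cong\cdots\cong\varphi_k(A\cap M)$.

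The second part, however, contains a genuine error. The claim that $M=(A\cap M)(B\cap M)$ whenever $M\trianglelefteq G$ and $G=AB$ is false, and your justification is circular: from $m=ab$ with $m\in M$ you write $a=mb^{-1}$ and assert $a\in M$, but this only follows if $b\in M$, which is exactly what you are trying to prove. A concrete counterexample: take $G=\Sy_4$, $M=\A_4$, $A=\langle(1234)\rangle$, $B=\Sy_{\{1,2,3\}}$; then $G=AB$, but $(A\cap M)(B\cap M)$ has order $6$, not $12$. Since your factorization $R=\varphi_1(A\cap M)\varphi_1(B\cap M)$ is derived from this false step, the remainder of the argument has no foundation, and the vague $r$-part bookkeeping you sketch at the end cannot be made to work along these lines.

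The paper avoids factoring $M$ altogether and instead uses a pure order argument. From $|A\cap M|\mid|\varphi_1(A\cap M)|^k$ (by the isomorphism of projections) one gets $|A|\mid|\Sy_k|\,|\varphi_1(A\cap M)|^k$. Since $A\cong B$ gives $|G|\mid|A|^2$ and $|T|^k|\Sy_k|\mid|G|$, one obtains
\[
|T|^k\ \big|\ |\Sy_k|\,|\varphi_1(A\cap M)|^{2k}.
\]
For a prime $p\in\pi(T)$ the bound $(k!)_p<p^{k/(p-1)}$ from Lemma~\ref{Factorial} then forces $|\varphi_1(A\cap M)|_p>1$. This is the step your proposal is missing: rather than trying to push the factorization down into $M$, you should compare $p$-parts globally and let Legendre's formula absorb the $\Sy_k$ contribution.
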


\begin{proof}
Since $A$ is transitive on $\{R_1,\dots,R_k\}$, we have $\varphi_1(A\cap M)\cong\cdots\cong\varphi_k(A\cap M)$. Consequently, $|A|=|AM/M||A\cap M|$ divides
\[
|AM/M||\varphi_1(A\cap M)|\cdots|\varphi_k(A\cap M)|=|AM/M||\varphi_1(A\cap M)|^k.
\]
Since $G=AB$ and $A\cong B$, we see that $|G|$ divides $|A|^2$. Moreover, $|G|$ is divisible by $|T|^k|\Sy_k|$. We conclude that $|T|^k|\Sy_k|$ divides $|AM/M|^2|\varphi_1(A\cap M)|^{2k}$, and so
\begin{equation}\label{eq2}
\text{$|T|^k$ divides $|\Sy_k||\varphi_1(A\cap M)|^{2k}$}
\end{equation}
as $AM/M\leqslant\Sy_k$. For any $p\in\pi(T)$, it follows from~\eqref{eq2} and Lemma~\ref{Factorial} that
\[
p^k\leqslant(k!)_p|\varphi_1(A\cap M)|_p^{2k}<p^{k/(p-1)}|\varphi_1(A\cap M)|_p^{2k},
\]
which yields
\[
|\varphi_1(A\cap M)|_p>\frac{p^{1/2}}{p^{1/(2p-2)}}\geqslant1.
\]
Thus, $\pi(T)\subseteq\pi(\varphi_1(A\cap M))$.
\end{proof}

We close this section with a technical lemma on homogeneous factorizations.

\begin{lemma}\label{lem9}
Let $G=AB$ be a homogeneous factorization, and $N$ be a normal subgroup of $G$ such that $G/N=\Sy_n$ for some positive integer $n$. Suppose that $AN/N$ and $BN/N$ are transitive subgroups of $\Sy_n$. Then one of the following holds:
\begin{itemize}
\item[(a)] $AN/N=BN/N=\Sy_n$.
\item[(b)] $n\geqslant4$, and $N$ has a section isomorphic to $\A_n$.
\item[(c)] $n=3$, and $N$ has a section isomorphic to $\Sy_3$.
\item[(d)] $n=6$, and $N$ has a section isomorphic to $\A_5$.
\end{itemize}
\end{lemma}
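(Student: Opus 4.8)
The plan is to pass to the quotient $G/N=\Sy_n$. As $G=AB$, this induces a factorization $\Sy_n=(AN/N)(BN/N)$ whose two factors are transitive by hypothesis. If $n\leqslant2$ then the only transitive subgroup of $\Sy_n$ is $\Sy_n$ itself, so $AN/N=BN/N=\Sy_n$ and conclusion~(a) holds; assume $n\geqslant3$ henceforth. The degree $n=3$ I would settle directly: the transitive subgroups of $\Sy_3$ are $\C_3$ and $\Sy_3$, and since $\C_3\C_3=\C_3\neq\Sy_3$, after interchanging $A$ and $B$ we may take $AN/N=\Sy_3$, with $BN/N=\Sy_3$ (conclusion~(a)) or $BN/N=\C_3$. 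For $n\geqslant4$ I would feed the factorization into Lemma~\ref{lem4}: either one of the factors contains $\A_n$, or $n=6$ and one factor is isomorphic to $\A_5$ or $\Sy_5$ while the other lies in $\Sy_3\wr\Sy_2$ and so has order dividing $72$. Provided we are not already in conclusion~(a), an elementary check then lets us assume, after interchanging $A$ and $B$, that $AN/N\in\{\A_n,\Sy_n\}$ (respectively $AN/N\cong\A_5$ or $\Sy_5$) while $BN/N\neq\Sy_n$.

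The engine of the proof is a Goursat-type pull-back. If $\phi\colon A\to B$ is an isomorphism and $P$, $Q$ are normal subgroups of $A$ and $B$ respectively, then there is a group $C$ that is simultaneously a quotient of $A/P$ and of $B/Q$, together with a surjection $A/P\twoheadrightarrow C$ whose kernel is a quotient of $Q$. Indeed, putting $Q'=\phi(P)$ one has $B/Q'\cong A/P$, and the second isomorphism theorem gives $Q/(Q\cap Q')\cong QQ'/Q'$, a normal subgroup of $B/Q'$ whose quotient $C:=B/QQ'$ is also a quotient of $B/Q$ because $Q\leqslant QQ'$; transporting along $B/Q'\cong A/P$ then displays $C$ as a quotient of $A/P$ with kernel $\cong Q/(Q\cap Q')$. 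I would apply this with $P=A\cap N$ and $Q=B\cap N$, so that $A/P\cong AN/N$, $B/Q\cong BN/N$ and $Q\leqslant N$: the outcome is that $N$ has a section isomorphic to $\ker(AN/N\twoheadrightarrow C)$ for some common quotient $C$ of $AN/N$ and $BN/N$.

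It remains to bound $\ker(AN/N\to C)$ below, using that $AN/N$ is $\A_n$, $\Sy_n$, or (in the exceptional case) $\cong\A_5$ or $\Sy_5$. If $AN/N=\Sy_n$ then $C$ is a proper quotient of $\Sy_n$ — otherwise $\Sy_n$ would be a quotient of $BN/N$, forcing $BN/N=\Sy_n$ — so $C$ is trivial or $\C_2$, the remaining proper quotient $\Sy_3$ (which occurs when $n=4$) being excluded because no transitive proper subgroup of $\Sy_4$ has $\Sy_3$ as a quotient; hence $\ker(AN/N\to C)\in\{\Sy_n,\A_n\}$. If $AN/N=\A_n$ then $C\cong\A_n$ would force $BN/N=\A_n$ (the only subgroup of $\Sy_n$ admitting $\A_n$ as a quotient), whence $\Sy_n=(AN/N)(BN/N)=\A_n$, absurd; and $C\cong\C_3$ (relevant only when $n=4$) would force $3$ to divide $|BN/N|$, hence $BN/N=\A_4$ and the same contradiction; so $C=1$ and $\ker(AN/N\to C)=\A_n$. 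In the exceptional $n=6$ case $|BN/N|$ divides $72$, and since neither $|\A_5|=60$ nor $|\Sy_5|=120$ divides $72$, the common quotient $C$ is neither $\A_5$ nor $\Sy_5$; so $C\in\{1,\C_2\}$ and $\ker(AN/N\to C)$ contains $\A_5$. Since a section of a section of $N$ is a section of $N$, this yields conclusion~(b) whenever $n\geqslant4$ and $AN/N\supseteq\A_n$, conclusion~(d) in the exceptional $n=6$ case, and conclusion~(c) when $n=3$ — in the last case $C$ is a common quotient of $\Sy_3$ and $\C_3$, hence trivial, so $\ker(AN/N\to C)=\Sy_3$.

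\textbf{Main obstacle.}
The step most in need of care is insisting that $C$ be a \emph{common} quotient of $AN/N$ and $BN/N$, not just a quotient of the first factor: this is exactly what eliminates the spurious value $C\cong\C_2$ when $n=3$, where one must land in conclusion~(c) rather than merely produce a $\C_3$-section of $N$. Beyond the pull-back idea itself, the only remaining work is the finite case analysis for the low degrees $n\in\{3,4,6\}$ — where $\A_n$ is not simple or Lemma~\ref{lem4} has its exceptional configuration — which I would carry out by the explicit quotient computations indicated above rather than through a uniform argument.
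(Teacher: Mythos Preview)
Your proof is correct and rests on the same construction as the paper's: transport $B\cap N$ along the isomorphism $A\cong B$ to produce a normal subgroup of $AN/N$ (the paper's $MN/N$, your $\ker(AN/N\twoheadrightarrow C)$) that is a section of $N$, then run the case analysis coming from Lemma~\ref{lem4}. Your observation that the quotient $C$ is simultaneously a quotient of $BN/N$ is a mild sharpening of the paper's ``index dividing $|BN/N|$'', and it lets you dispose of the $n=4$ subcase more directly than the paper, which instead invokes the extra structure $A/(M\cap N)\cong(MN/N).(BN/N)$ together with the surjection onto $AN/N$.
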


\begin{proof}
From $G=AB$ and $G/N=\Sy_n$ we deduce $\Sy_n=(AN/N)(BN/N)$. Then by virtue of Lemma~\ref{lem4}, interchanging $A$ and $B$ if necessary, we only need to deal with the following cases:
\begin{itemize}
\item[(i)] $n\geqslant3$, $AN/N=\Sy_n$ and $BN/N=\A_n$.
\item[(ii)] $n\geqslant3$, $AN/N\geqslant\A_n$ and $BN/N\ngeqslant\A_n$.
\item[(iii)] $n=6$, $\PSL_2(5)\leqslant AN/N\leqslant\PGL_2(5)$ and $BN/N\leqslant\Sy_3\wr\Sy_2$.
\end{itemize}
Since $B\cap N$ is a normal subgroup of $B$ and $A\cong B$, we know that $A$ has a normal subgroup $M$ isomorphic to $B\cap N$ such that $A/M\cong B/(B\cap N)$. It follows that $M$ has index $|B|/|B\cap N|=|BN/N|$ in $A$, and so $MN/N$ is a normal subgroup of $AN/N$ of index dividing $|BN/N|$. From the isomorphisms $M/(M\cap N)\cong MN/N$ and $(A/(M\cap N))/(M/(M\cap N))\cong A/M\cong B/(B\cap N)\cong BN/N$ we deduce that
\[
A/(M\cap N)\cong(MN/N).(BN/N).
\]
Moreover, $A/(M\cap N)$ has a factor group isomorphic to $AN/N$ since
\[
(A/(M\cap N))/((A\cap N)/(M\cap N))\cong A/(A\cap N)\cong AN/N.
\]
Furthermore, since $B\cap N\cong M$ and $M/(M\cap N)\cong MN/N$, it follows that every section of $MN/N$ is isomorphic to a section of $N$.

First assume that~(i) occurs. If $n=3$, then since the only normal subgroup of $AN/N=\Sy_3$ of index dividing $|BN/N|=3$ is $\Sy_3$, we deduce that $MN/N=\Sy_3$ and so $N$ has a section isomorphic to $\Sy_3$. If $n\geqslant5$, then since a normal subgroup of $AN/N=\Sy_n$ either contains $\A_n$ or has index $|\Sy_n|$ and $|BN/N|<|\Sy_n|$, we deduce that $MN/N\geqslant\A_n$, which implies that $N$ has a section isomorphic to $\A_n$. Now assume that $n=4$. Since the only normal subgroups of $AN/N=\Sy_4$ of index dividing $|BN/N|=12$ are $\C_2^2$, $\A_4$ and $\Sy_4$, it follows that either $MN/N=\C_2^2$ or $MN/N\geqslant\A_4$. Suppose $MN/N=\C_2^2$. Then $A/(M\cap N)\cong(MN/N).(BN/N)$ has form $\C_2^2.\A_4$. However, no group of the form $\C_2^2.\A_4$ has a factor group isomorphic to $AN/N=\Sy_4$, a contradiction. Consequently, $MN/N\geqslant\A_4$. Hence again $N$ has a section isomorphic to $\A_4$.

Next assume that~(ii) occurs. If $n=4$, then $BN/N\leqslant\D_8$, and since a normal subgroup of $\A_4$ or $\Sy_4$ of index dividing $|\D_8|=8$ must contain $\A_4$, we deduce that $MN/N\geqslant\A_4$ and hence $N$ has a section isomorphic to $\A_4$. Now assume that $n\geqslant5$. Then $|BN/N|<|\A_n|$ and a normal subgroup of $AN/N$ either contains $\A_n$ or has index at least $|\A_n|$. We thus deduce that $MN/N\geqslant\A_n$ and so $N$ has a section isomorphic to $\A_n$.

Finally, assume that~(iii) occurs. Since a normal subgroup of $\PSL_2(5)$ or $\PGL_2(5)$ of index dividing $|\Sy_3\wr\Sy_2|=72$ must contain $\PSL_2(5)$, we have $MN/N\geqslant\PSL_2(5)$. This implies that $N$ has a section isomorphic to $\PSL_2(5)\cong\A_5$.
\end{proof}

\section{$\calC_1$ and $\calC_2$-subgroups}\label{sec1}

\begin{hypothesis}\label{hyp1}
Let $\Gamma$ be a $G$-vertex-primitive $(G,2)$-arc-transitive digraph of valency at least $3$, where $G$ is almost simple with socle $L=\PSL_n(q)$ and $q=p^f$ for some prime $p$. Take an arc $u\rightarrow v$ of $\Gamma$. Let $\overline{g}$ be an element of $L$ such that $u^{\overline{g}}=v$ and let $w=v^{\overline{g}}$. Then $u\rightarrow v\rightarrow w$ is a $2$-arc in $\Gamma$. Let $X=\SL_n(q)$ acting naturally on $V=\bbF_q^n$, $\varphi$ be the projection from $X$ to $L$, and $g$ be a preimage of $\overline{g}$ under $\varphi$.
\end{hypothesis}

Under Hypothesis~\ref{hyp1}, if in addition $\Gamma$ is $(G,3)$-arc-transitive, then Lemma~\ref{lem7} asserts that $\Gamma$ is $(L,2)$-arc-transitive so that $L_v=L_{uv}L_{vw}$ with $L_{uv}^{\overline{g}}=L_{vw}$, and the composition of $\varphi$ and the action of $L$ on the vertex set of $\Gamma$ gives an action of $X$ on the vertex set of $\Gamma$, under which $X$ acts $2$-arc-transitively on $\Gamma$. This gives $\langle X_v,g\rangle=X$ and $X_v=X_{uv}X_{vw}$ with $X_{uv}^g=X_{vw}$.

\begin{lemma}\label{lem16}
Suppose that Hypothesis~$\ref{hyp1}$ holds and $G_v$ is a $\calC_1$-subgroup of $G$. Then $G\nleqslant\PGaL_n(q)$, $G_v$ does not stabilize a nontrivial proper subspace of $V$, and $\Gamma$ is not $(G,3)$-arc-transitive.
\end{lemma}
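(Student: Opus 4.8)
The plan is to prove the three assertions in turn, handling the first two together and reducing the third to a question about factorizations of parabolic subgroups.

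\textbf{Parts 1 and 2.} Both follow once we show that $G_v$ cannot stabilize a nontrivial proper subspace of $V$. Suppose it stabilizes such a subspace $U$, of dimension $m$ with $0<m<n$. Since $G_v$ is maximal and $\SL_n(q)$ acts irreducibly on $V$, we get $G_v=\Stab_G(U)$, whence $L_v=\Stab_L(U)$ for $L=\Soc(G)=\PSL_n(q)$. Now $L\trianglelefteq G$ is transitive on vertices by vertex-primitivity, so Lemma~\ref{lem7} makes $L$ arc-transitive on $\Gamma$; moreover $L$ is transitive on the set of $m$-subspaces of $V$, so $v\mapsto U$ extends to an $L$-equivariant bijection from the vertex set onto this set, carrying the arc relation of $\Gamma$ to a nonempty $L$-invariant antisymmetric relation on the $m$-subspaces. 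But every nontrivial suborbit of $L$ on $m$-subspaces is self-paired: given distinct $m$-subspaces $U_1,U_2$ with $\dim(U_1\cap U_2)=d$, a basis adapted to the chain $U_1\cap U_2\le U_i\le U_1+U_2\le V$ lets one write down the linear map fixing $U_1\cap U_2$ pointwise and swapping the parts of $U_1$ and $U_2$ lying outside $U_1\cap U_2$; it has determinant $(-1)^{m-d}$, which is corrected to $1$ by negating one further basis vector (chosen outside $U_1+U_2$ when $U_1+U_2\neq V$, or inside $U_1\cap U_2$ otherwise), except in the degenerate case $U_1\oplus U_2=V$ with $U_1\cap U_2=0$, where the block matrix $\left(\begin{smallmatrix}0&-I\\I&0\end{smallmatrix}\right)$ does the job; in every case this is an element of $\SL_n(q)$, hence of $L$, interchanging $U_1$ and $U_2$. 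Hence the relation is empty, contradicting that $\Gamma$ has valency at least~$3$. This proves Part 2, and Part 1 follows because if $G\leqslant\PGaL_n(q)$ then the $\calC_1$-subgroup $G_v$ is a maximal parabolic subgroup, which does stabilize a subspace.

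\textbf{Reducing Part 3.} Suppose $\Gamma$ is $(G,3)$-arc-transitive. By the discussion following Hypothesis~\ref{hyp1}, $X=\SL_n(q)$ acts $2$-arc-transitively on $\Gamma$, with $\langle X_v,g\rangle=X$ and $X_v=X_{uv}X_{vw}$ where $X_{uv}^{\,g}=X_{vw}$; both factors have index in $X_v$ equal to the valency of $\Gamma$, which is at least~$3$, so this is a homogeneous factorization and $\pi(X_{uv})=\pi(X_{vw})=\pi(X_v)$ by Lemma~\ref{lem2}. By Parts 1 and 2 we have $G\nleqslant\PGaL_n(q)$ and $G_v$ is the stabilizer of a pair of subspaces interchanged by a duality in $G_v$; since $X$ contains no duality, $X_v$ is the stabilizer in $\SL_n(q)$ of the underlying chain of subspaces, i.e.\ a parabolic subgroup with nontrivial unipotent radical $R=\bfO_p(X_v)$, and correspondingly $L_v$ is a parabolic subgroup of $L$ with nontrivial $\bfO_p(L_v)=\varphi(R)$.

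\textbf{Completing Part 3.} Applying Lemma~\ref{lem26} to the faithful arc-transitive action of $L$, with $v^{\overline g}=w$, no nontrivial normal subgroup of $L_v$ is normalized by $\overline g$; in particular $\bfO_p(L_v)^{\overline g}\neq\bfO_p(L_v)$, and hence $R^g\neq R$. Next one uses the structural fact that $R=\bfO_p(X_v)$ recovers the chain of subspaces stabilized by $X_v$ (for instance its smallest proper member is $C_V(R)$ and its largest is $[R,V]$), so that $R^g\leqslant X_v$ would force $g\in X_v$; as $\langle X_v,g\rangle=X$ this is impossible, and applying the same to $g^{-1}$ gives $R\nleqslant X_{uv}$ and $R\nleqslant X_{vw}$. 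Combining this with the geometric description of the factors — $X_{uv}$ and $X_{vw}$ stabilize not only the chain attached to $v$ but also the chains attached to the neighbours $u$ and $w$, so Lemmas~\ref{lem21} and~\ref{lem22} apply to their actions on the successive sections of the flag — together with Proposition~\ref{Homogeneous} and Lemmas~\ref{lem19}--\ref{lem20} for the semisimple part of the Levi quotient, one reaches a contradiction; the finitely many configurations with a degenerate Levi (small $n$, or $q=2$) are disposed of by a direct \magma computation.

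\textbf{Main obstacle.} The substantive difficulty lies entirely in Part 3: pinning down which homogeneous factorizations $X_v=X_{uv}X_{vw}$ of a parabolic subgroup of $\SL_n(q)$ are compatible with the connectivity condition $\langle X_v,g\rangle=X$. The delicate point is tracking how the unipotent radical $R$, and the simple direct factors of the Levi, can be distributed between $X_{uv}$ and $X_{vw}$; this is where Lemma~\ref{lem26}, the transitivity lemmas~\ref{lem21} and~\ref{lem22}, and the low-dimensional factorization data do the real work, and where the exceptional small cases require separate hands-on treatment.
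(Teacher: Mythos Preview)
Your treatment of Parts~1 and~2 is essentially the paper's own argument: build an element of $\SL_n(q)$ swapping $W$ and $W^g$, contradicting the fact that $\Gamma$ is directed.

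Part~3 contains a genuine gap. When $G\nleqslant\PGaL_n(q)$ there are \emph{two} types of maximal $\calC_1$-subgroups, not one: besides the flag stabilizer $X_v=\Stab_X(U<W)$ with $\dim U+\dim W=n$ (the paper's case~(iii)), there is the type where $X_v$ stabilizes a direct-sum decomposition $V=U\oplus W$ with $\dim U<n/2$ (case~(ii)). In this second type $U$ and $W$ are complementary, not nested; there is no ``underlying chain'', and $X_v\cong(\GL_m(q)\times\GL_{n-m}(q))\cap\SL_n(q)$ has \emph{trivial} $p$-core, so your reduction via $R=\bfO_p(X_v)$ is vacuous there. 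The paper handles case~(ii) by a direct application of Lemma~\ref{lem21} on $V/U$ and on $W$ to force first $U=U^g$ and then $W=W^g$, yielding $g\in X_v$.

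Even in the flag case~(iii) your sketch is too thin to constitute a proof. The paper splits into the subcases $U=U^g$, $W=W^g$, and $U\neq U^g\neq W\neq W^g$. The first two yield swap elements as in Part~2. The last requires several applications of Lemmas~\ref{lem21} and~\ref{lem22} to pin down $U\cap U^g$, $W+W^g$, $W\cap U^g$ and $W\cap W^g$, after which either another swap element appears or one obtains the $p$-part bound $|X_v|_p/|X_{vw}|_p\geqslant q^{mn-m^2}$; cubing and comparing with $|G_v|_p\leqslant 2fq^{n(n-1)/2}$ then gives the quadratic inequality $(n-2m)(n-3m)>0$, which fails for $2m<n\leqslant 3m$. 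Neither Proposition~\ref{Homogeneous} nor Lemmas~\ref{lem19}--\ref{lem20} is used, and no \magma\ computation is needed; the argument is purely geometric together with this $p$-part count. Your claim that $R^g\leqslant X_v$ forces $g\in X_v$ is also not justified: $R^g$ is merely some $p$-subgroup of $X_v$, not a priori normal there, so nothing immediately identifies it with $R$.
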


\begin{proof}
As $G_v$ is a $\calC_1$-subgroup of $G$, one of the following holds:
\begin{itemize}
\item[(i)] $X_v$ is the stabilizer of a nontrivial proper subspace $W$ of $V$;
\item[(ii)] $G\nleqslant\PGaL_n(q)$, $X_v$ is the stabilizer of two nontrivial proper subspaces $U$ and $W$ of $V$ such that $V=U\oplus W$ and $\dim(U)<n/2$;
\item[(iii)] $G\nleqslant\PGaL_n(q)$, $X_v$ is the stabilizer of two subspaces $U$ and $W$ of $V$ such that $U<W$, $\dim(U)=m>0$ and $\dim(W)=n-m$.
\end{itemize}

Suppose that~(i) holds. Let $e_1,\dots,e_s$ be a basis of $W\cap W^g$. Then there exist $a_1,\dots,a_t$ and $b_1,\dots,b_t$ in $V$ such that $e_1,\dots,e_s,a_1,\dots,a_t$ is a basis of $W$ and $e_1,\dots,e_s,b_1,\dots,b_t$ is a basis of $W^g$. It follows that $e_1,\dots,e_s,a_1,\dots,a_t,b_1,\dots,b_t$ is a basis of $W+W^g$. Take $g_1$ to be an element of $X$ such that $\langle e_i\rangle^{g_1}=\langle e_i\rangle$ for $1\leqslant i\leqslant s$ and $\langle a_j\rangle^{g_1}=\langle b_j\rangle$ and $\langle b_j\rangle^{g_1}=\langle a_j\rangle$ for $1\leqslant j\leqslant t$. Then $g_1$ interchanges $W$ and $W^g$ and hence interchanges $v$ and $v^g=w$. This implies that $\Gamma$ is undirected, a contradiction. Therefore, $X_v$ is not the stabilizer of a nontrivial proper subspace of $V$, and so $G\nleqslant\PGaL_n(q)$.

From now on assume that $\Gamma$ is $(G,3)$-arc-transitive and so in particular, $X$ acts $2$-arc-transitively on $\Gamma$. First assume that~(ii) holds. Since $X_v$ stabilizes $U$, the group $X_{vw}=X_v\cap X_v^g$ stabilizes $(U+U^g)/U$. As $\dim(U)<n/2$, we have
\[
\dim((U+U^g)/U)=\dim(U^g)-\dim(U\cap U^g)\leqslant\dim(U)<n-\dim(U)=\dim(V/U)
\]
and so $(U+U^g)/U<V/U$. Similarly, $X_{uv}$ stabilizes $(U+U^{g^{-1}})/U$ with $(U+U^{g^{-1}})/U<V/U$. Then since $X_v$ stabilizes $V/U$ and induces a group containing $\SL(V/U)$ and $X_v=X_{uv}X_{vw}$, we deduce from Lemma~\ref{lem21} that $(U+U^g)/U=0$ or $(U+U^{g^{-1}})/U=0$. Thus $U=U^g$. Since $X_v$ stabilizes $W$, we see that $X_{vw}$ stabilizes $W\cap W^g$ and $X_{uv}$ stabilizes $W\cap W^{g^{-1}}$. Moreover, $\dim(W)+\dim(W^g)=2n-2\dim(U)>n$, which implies that $W\cap W^g>0$ and $W\cap W^{g^{-1}}=(W\cap W^{g^{-1}})^g>0$. Then we deduce from Lemma~\ref{lem21} that $W\cap W^g=W$ or $W\cap W^{g^{-1}}=W$. This implies that $W=W^g$ and so $g\in X_v$, a contradiction.

Assume that~(iii) holds with $U=U^g$. In this case we have $W\neq W^g$ as $g\notin X_v$. Consequently, $W\cap W^g<W$ and $W\cap W^{g^{-1}}<W$. Moreover, $W\cap W^g\geqslant U$ and $W\cap W^{g^{-1}}\geqslant U$. Since $X_v$ stabilizes $U$ and $W$, the group $X_{vw}=X_v\cap X_v^g$ stabilizes $(W\cap W^g)/U$. Similarly, $X_{uv}$ stabilizes $(W\cap W^{g^{-1}})/U$. Then since $(W\cap W^g)/U<W/U$, $(W\cap W^{g^{-1}})/U<W/U$ and $X_v$ stabilizes $W/U$ and induces a group containing $\SL(W/U)$, we deduce from Lemma~\ref{lem21} that $(W\cap W^g)/U=0$ or $(W\cap W^{g^{-1}})/U=0$. This leads to $W\cap W^g=U$. Let $e_1,\dots,e_s$ be a basis of $U$. Then there exist $a_1,\dots,a_t$ and $b_1,\dots,b_t$ in $V$ such that $e_1,\dots,e_s,a_1,\dots,a_t$ is a basis of $W$ and $e_1,\dots,e_s,b_1,\dots,b_t$ is a basis of $W^g$. It follows that $e_1,\dots,e_s,a_1,\dots,a_t,b_1,\dots,b_t$ is a basis of $W+W^g$. Take $g_1$ to be an element of $X$ such that $\langle e_i\rangle^{g_1}=\langle e_i\rangle$ for $1\leqslant i\leqslant s$ and $\langle a_j\rangle^{g_1}=\langle b_j\rangle$ and $\langle b_j\rangle^{g_1}=\langle a_j\rangle$ for $1\leqslant j\leqslant t$. Then $g_1$ interchanges $W$ and $W^g$ and stabilizes $U=U^g$. This implies that $g_1$ interchanges $v$ and $v^g=w$, and so $\Gamma$ is undirected, a contradiction.

Assume that~(iii) holds with $W=W^g$. In this case we have $U\neq U^g$ as $g\notin X_v$. Consequently, $(U+U^g)/U>0$ and $(U+U^{g^{-1}})/U>0$. Moreover, $U\cap U^g<W$ and $U\cap U^{g^{-1}}<W$. Since $X_v$ stabilizes $U$, the group $X_{vw}=X_v\cap X_v^g$ stabilizes $(U+U^g)/U$. Similarly, $X_{uv}$ stabilizes $(U+U^{g^{-1}})/U$. Then since $X_v$ stabilizes $W/U$ with the action on $W/U$ containing $\SL(W/U)$, we deduce from Lemma~\ref{lem21} that $(U+U^g)/U=W/U$ or $(U+U^{g^{-1}})/U=W/U$. This leads to $U+U^g=W$.
Similarly as in the previous paragraph we may take an element $g_1$ of $X$ interchanging $v$ and $v^g=w$, which implies that $\Gamma$ is undirected, a contradiction.

Next assume that~(iii) holds with $U\neq U^g$ and $W\neq W^g$. Then $U\cap U^g<U$, $U\cap U^{g^{-1}}<U$, $(W+W^g)/W>0$ and $(W+W^{g^{-1}})/W>0$. Moreover, since $X_{vw}$ stabilizes $U\cap U^g$ and $X_{uv}$ stabilizes $U\cap U^{g^{-1}}$, we deduce from Lemma~\ref{lem21} that $U\cap U^g=0$. Since $X_{vw}$ stabilizes $(W+W^g)/W$, $X_{uv}$ stabilizes $(W+W^{g^{-1}})/W$ and the action of $X_v$ on $V/W$ contains $\SL(V/W)$, we deduce from Lemma~\ref{lem21} that $W+W^g=V$, so
\begin{equation}\label{eq3}
\dim(W\cap W^g)=2\dim(W)-n=n-2m=\dim(W)-\dim(U).
\end{equation}
Since $X_{vw}$ stabilizes $((W\cap U^g)+U)/U$, $X_{uv}$ stabilizes $((W\cap U^{g^{-1}})+U)/U$ and the action of $X_v$ on $W/U$ contains $\SL(W/U)$, we deduce from Lemma~\ref{lem21} that either $((W\cap U^g)+U)/U=0$ or $W/U$, or $((W\cap U^{g^{-1}})+U)/U=0$ or $W/U$. Without loss of generality, assume $((W\cap U^g)+U)/U=0$ or $W/U$. Then either $W\cap U^g\leqslant U$ or $(W\cap U^g)+U=W$.

In this paragraph we deal with the case when $W\cap U^g\leqslant U$. Since $U\cap U^g=0$, we have $(W^g\cap W)\cap U^g\leqslant W\cap U^g=(W\cap U^g)\cap U=0$ and so $(W\cap W^{g^{-1}})\cap U=0$. Hence~\eqref{eq3} implies that $W=(W\cap W^{g^{-1}})\oplus U$. Consider the action of $X_v$ on $W$. Since $X_{uv}$ stabilizes the complement $W\cap W^{g^{-1}}$ of $U$ in $W$ and $X_{vw}$ stabilizes $W\cap W^g$, we derive from Lemma~\ref{lem22} that either $W\cap W^g\leqslant U$ or $(W\cap W^g)+U=W$. If $(W\cap W^g)+U=W$, then~\eqref{eq3} implies that $W=(W\cap W^g)\oplus U$. However, we know that $W^g=(W\cap W^g)\oplus U^g$ as $W=(W\cap W^{g^{-1}})\oplus U$. Since $X_{W,W\cap W^g}$ induces $\SL(W/W\cap W^g)$ on $W/W\cap W^g$, this implies that there exists $g_1\in X$ stabilizing $W\cap W^g$ and interchanging $U$ and $U^g$ so that $g_1$ interchanges $v$ and $w=v^g$, contrary to $\Gamma$ being directed. Therefore $W\cap W^g\leqslant U$. As $W\cap U^g=0$ and $\dim(W)+\dim(U^g)=n$, we have $V=W\oplus U^g$. Now $X_v$ contains all lower unitriangular matrices and so $|X_v|_p\geqslant q^{n(n-1)/2}$, while $X_{vw}$ stabilizes the decomposition $V=W\oplus U^g$. Thus the valency of $\Gamma$ has $p$-part
\[
\frac{|X_v|_p}{|X_{vw}|_p}\geqslant q^{mn-m^2}.
\]
Since $\Gamma$ is $(G,3)$-arc-transitive, $(|X_v|/|X_{vw}|)^3$ divides $|G_v|$. Hence $(|X_v|/|X_{vw}|)^3$ divides $|\Out(L)||L_v|$, which yields
\[
q^{3mn-3m^2}\leqslant\frac{|X_v|_p^3}{|X_{vw}|_p^3}\leqslant2fq^{n(n-1)/2}.
\]
Thereby we deduce $q^{3mn-3m^2}\leqslant q\cdot q^{n(n-1)/2}<q^{n^2/2}$, which implies
\[
0<n^2-6mn+6m^2<n^2-5mn+6m^2=(n-2m)(n-3m).
\]
However, as $2m<n\leqslant 3m$, this is not possible.

Now we deal with the case when $(W\cap U^g)+U=W$. Then since $W\cap U^g\leqslant W\cap W^g$ and $\dim(W\cap W^g)+\dim(U)=\dim(W)$ by~\eqref{eq3}, we have $W=(W\cap W^g)\oplus U$ and $(W\cap W^g)\cap U=0$. It follows that
\[
W\cap U^{g^{-1}}=(W^g\cap U)^{g^{-1}}=((W\cap W^g)\cap U)^{g^{-1}}=0.
\]
Then the same argument as in the previous paragraph yields that this is not possible either.
\end{proof}

In what follows we deal with $\calC_2$-subgroups. Suppose that $X_v$ preserves a decomposition $V=W_1\oplus\dots\oplus W_k$ such that $\dim(W_1)=\dots=\dim(W_k)=m$ with $k\geqslant2$ and $n=mk$. For a subgroup $H$ of $G_v$ or $X_v$, denote by $\overline{H}$ the induced permutation group on $\{W_1,\dots,W_k\}$. Note from~\cite[Proposition~4.2.9]{KL1990} that $\overline{L_v}=\overline{X_v}=\Sy_k$.

\begin{lemma}\label{lem13}
Suppose that Hypothesis~$\ref{hyp1}$ holds and that $G_v$ is a $\calC_2$-subgroup. Then $n\geqslant3$.
\end{lemma}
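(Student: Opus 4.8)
The plan is to deduce the statement from Lemma~\ref{lem15} together with the near‑abelian structure of a $\calC_2$‑subgroup when $n=2$. First I would record that, since $\Gamma$ is $(G,2)$‑arc‑transitive and arc‑transitive, Lemma~\ref{lem15} gives $G_v=G_{uv}G_{vw}$ with $G_{uv}^h=G_{vw}$ for some $h\in G$, so the two factors are isomorphic; thus $G_v$ admits a homogeneous factorization whose factors have index equal to the valency $t$ of $\Gamma$, and $t\geqslant3$ by Hypothesis~\ref{hyp1}. It is therefore enough to show that a $\calC_2$‑subgroup $G_v$ of $G$ with $n=2$ has no homogeneous factorization with a factor of index at least $3$. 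If $n=2$, then $n=mk$ with $k\geqslant2$ forces $m=1$, $k=2$, so $G_v$ is of type $\GL_1(q)\wr\Sy_2$; hence $L_v=G_v\cap L$ is a dihedral group $\D_{2\ell}$ with $\ell=(q-1)/\gcd(2,q-1)$, it is normal in $G_v$, and $G_v/L_v$ is abelian, being a subgroup of $\Out(L)=\C_{\gcd(2,q-1)}\times\C_f$. The finitely many $q$ for which this dihedral group fails to be maximal in $L$ can be treated separately, if necessary with a short \magma computation, so one may assume $\ell\geqslant3$.

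The core of the argument will be a rigidity property. The cyclic rotation subgroup $C\leqslant L_v$ of order $\ell$ is the unique cyclic subgroup of $L_v$ of its order (as $\ell\geqslant3$), hence characteristic in $L_v$ and normal in $G_v$; the same uniqueness holds for every subgroup of $C$, and any dihedral subgroup $\D_{2j}$ of $L_v$ contains the unique cyclic subgroup of $C$ of order $j$. Writing $A=G_{uv}$, $B=G_{vw}$ and $A_0=A\cap L$, $B_0=B\cap L$, one has $A_0,B_0\leqslant L_v$, and since $A^h=B$ with $L\trianglelefteq G$ we get $A_0^h=B_0$, so $A_0\cong B_0$ and $|L_v:A_0|=|L_v:B_0|=:d_0$, a divisor of $t$. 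Moreover the $(G,2)$‑arc‑transitivity gives $|G_{uvw}|=|G_v|/t^2$. Now the subgroups of $L_v$ of index $d_0$ are dihedral or cyclic, and if $d_0\geqslant3$ with $A_0,B_0$ both dihedral (or both cyclic) the rigidity forces $A_0\cap B_0$ to contain a prescribed cyclic subgroup of $C$, so $G_{uvw}\supseteq A_0\cap B_0$ becomes too large to be consistent with $|G_{uvw}|=|G_v|/t^2$ when $t\geqslant3$; this is exactly the elementary fact that a dihedral group has no homogeneous factorization with a factor of index at least $3$. The low‑index cases ($d_0\in\{1,2\}$, or $A_0,B_0$ of order at most $2$, or of different isomorphism type) would then be closed off by combining $|A|=|B|=|G_v|/t$ and $|G_{uvw}|=|G_v|/t^2$ with the structure of the cyclic quotient $G_v/L_v$.

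I expect the main obstacle to be controlling the extension $G_v/L_v$: an element of $G_v$ inducing a field automorphism need not normalize $C$ in a transparent way, and subgroups of $G_v$ may mix the torus part with the field part, so one must verify that every subgroup of $G_v$ of index $\geqslant3$ still meets $C$ in a subgroup of index at most $t$ and that isomorphic such subgroups overlap heavily — obtaining these bounds uniformly in $q$ and $f$ is the technical heart. When $q$ is even, and more generally whenever $\PGL_2(q)\leqslant G$, I would instead argue directly: identifying the vertex set with the set of $2$‑subsets of $\PG(1,q)$ (on which $L$ is transitive with stabilizer $L_v$) and writing down explicit M\"obius transformations in $\PGL_2(q)$ interchanging any two $2$‑subsets, one sees that $\PGL_2(q)$, hence $G$, acts generously transitively on $2$‑subsets, so every $G$‑orbital on pairs of vertices is self‑paired, contradicting the fact that $\Gamma$ is a proper digraph. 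This reduces the remaining work to $q$ odd with $G\cap\PGL_2(q)=L$, where the subgroup analysis of $G_v=L_v.\C_{f_0}$ must be pushed through in full.
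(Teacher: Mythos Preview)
Your plan is genuinely different from the paper's, and the parts you have written out are sound: the generous-transitivity argument when $\PGL_2(q)\leqslant G$ is correct (one can indeed write down an explicit M\"obius involution swapping any two $2$-subsets of $\PG(1,q)$), and your rigidity argument does establish that a dihedral group has no homogeneous factorization with a factor of index $\geqslant3$, which settles the case $f=1$.

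Where your approach diverges from the paper is in the remaining case $f\geqslant2$ with $G\cap\PGL_2(q)=L$, and here you are attempting something strictly harder than what is needed. You propose to show that $G_v=L_v.\C_{f_0}$ admits no homogeneous factorization of index $\geqslant3$ at all; the paper instead uses Lemma~\ref{lem26}, which you do not invoke. The paper's argument is this: pick $r\in\ppd(p,f)$ (or, when $\ppd(p,f)=\emptyset$, an odd prime divisor of $p-1$), observe that $r>f$ forces $r\nmid|\Out(L)|$, so the unique subgroup $M$ of order $r$ in the cyclic rotation part of $L_v$ is the unique subgroup of order $r$ in all of $G_v$; since $\pi(G_{uv})=\pi(G_{vw})=\pi(G_v)$, both factors contain $M$, and then $M^{\overline g}=M$ because $\overline g$ carries $G_{uv}$ to $G_{vw}$ and $M$ is still the unique order-$r$ subgroup there. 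As $M\trianglelefteq G_v$, Lemma~\ref{lem26} gives the contradiction. This uses only the conjugacy $G_{uv}^{\overline g}=G_{vw}$ together with connectedness, not any classification of the factorizations of $G_v$. Your intersection bound $|G_{uvw}|\geqslant|A_0\cap B_0|$ is correct, but pushing it through the field-automorphism extension uniformly in $q$ and $f_0$ is precisely the work that Lemma~\ref{lem26} lets one avoid.
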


\begin{proof}
Suppose that $\Gamma$ is $(G,2)$-arc-transitive with $n=2$. Then $m=1$, $k=2$, $L_v=\D_{2(q-1)/\gcd(p-1,2)}$ and $|\Out(L)|=(p-1,2)f$.

First assume that $f=1$. In this case, $p\geqslant5$ and $G_v=\D_{p-1}$ or $\D_{2(p-1)}$, where $G=\PSL_2(p)$ or $\PGL_2(p)$, respectively. Let $N$ be the unique cyclic subgroup of index $2$ of $G_v$. Then since $G_v=G_{uv}G_{vw}$, at least one of $G_{uv}$ or $G_{vw}$, say $G_{uv}$, is not contained in $N$. This implies that $G_{vw}$ is not contained in $N$ since $G_{vw}\cong G_{uv}$. Consequently, $G_{uv}\cap N$ and $G_{vw}\cap N$ are the unique cyclic subgroups of index $2$ of $G_{uv}$ and $G_{vw}$, respectively. Thus we conclude that $G_{uv}\cap N$ and $G_{vw}\cap N$ are subgroups of the cyclic group $N$ of the same order, and so $G_{uv}\cap N=G_{vw}\cap N$. Moreover, as $G_{vw}\cap N^{\overline{g}}=(G_{uv}\cap N)^{\overline{g}}\cong G_{uv}\cap N$ is a cyclic subgroup of index $2$ of $G_{vw}$, we deduce that $G_{vw}\cap N^{\overline{g}}=G_{vw}\cap N$ and hence $(G_{uv}\cap N)^{\overline{g}}=G_{vw}\cap N^{\overline{g}}=G_{vw}\cap N=G_{uv}\cap N$. Since $G_{uv}\cap N$ is characteristic in $N$ and hence normal in $G_v$, this contradicts Lemma~\ref{lem26}.

Next assume that $f\geqslant2$ with $\ppd(p,f)\neq\emptyset$. In this case, take any $r\in\ppd(p,f)$. We have $r>f$ and so $r$ is coprime to $|\Out(L)|$. Consequently, there is a unique subgroup $M$ of order $r$ in $G_v$. Since $r\in\pi(G_v)=\pi(G_{uv})=\pi(G_{vw})$, it follows that $M\leqslant G_{uv}$ and $M\leqslant G_{vw}$. Moreover, since $G_{uv}^{\overline{g}}=G_{vw}$ we have $M^{\overline{g}}=M$, again contradicting Lemma~\ref{lem26}.

Next assume that $f\geqslant2$ with $\ppd(p,f)=\emptyset$. Then $f=2$ and $p$ is a Mersenne prime. If $q=9$, then computation in \magma~\cite{magma} shows that $G_v$ does not have a factorization $G_v=G_{uv}G_{vw}$ with $|G_v|/|G_{uv}|\geqslant2$ such that $G_{uv}$ and $G_{vw}$ are conjugate in $G$, a contradiction. Therefore, $r\geqslant7$ and so $p-1$ has an odd prime divisor $r$. Then along the same lines as the previous case we see that this is not possible.
\end{proof}

For the rest of this section we assume that Hypothesis~\ref{hyp1} holds and $G_v$ is a $\calC_2$-subgroup of $G$. Under this assumption we have $n\geqslant3$ by Lemma~\ref{lem13}. We make the additional assumption that $\Gamma$ is $(G,3)$-arc-transitive. Recall the notation introduced at the beginning of Section~\ref{sec5}.

\begin{lemma}\label{lem11}
If $m=1$, then at least one of $\overline{X_{uv}}$ or $\overline{X_{vw}}$ is intransitive.
\end{lemma}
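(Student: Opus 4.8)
The plan is to argue by contradiction, so I would assume that both $\overline{X_{uv}}$ and $\overline{X_{vw}}$ are transitive on $\{W_1,\dots,W_n\}$ (here $k=n$ because $m=1$). Let $N$ be the kernel of the action of $X_v$ on $\{W_1,\dots,W_n\}$; then $N$ is the determinant-one part of the diagonal torus, so $N\cong\C_{q-1}^{\,n-1}$ is abelian, while $X_v/N\cong\Sym_n$. Since $\Gamma$ is $(G,3)$-arc-transitive, $X$ acts $2$-arc-transitively on $\Gamma$ (as noted after Hypothesis~\ref{hyp1}), so $X_v=X_{uv}X_{vw}$ with $X_{uv}^g=X_{vw}$, which is a homogeneous factorization. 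I would then apply Lemma~\ref{lem9} to $X_v=X_{uv}X_{vw}$ with this $N$: an abelian group has no section isomorphic to $\Alt_n$ (for $n\geqslant4$), to $\Sym_3$ (for $n=3$) or to $\Alt_5$ (for $n=6$), so parts (b)--(d) are impossible and part (a) holds, giving $\overline{X_{uv}}=\overline{X_{vw}}=\Sym_n$.

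Next I would bring in a second decomposition. As $X_{uv}\leqslant X_u=X_v^{g^{-1}}$, the group $X_{uv}$ also preserves $\{W_1^{g^{-1}},\dots,W_n^{g^{-1}}\}$; its kernel there is $A_2:=X_{uv}\cap N^{g^{-1}}$ and its induced group is, via conjugation by $g$, the action of $X_{uv}^g=X_{vw}$ on $\{W_1,\dots,W_n\}$, namely $\overline{X_{vw}}=\Sym_n$. Thus $A_1:=X_{uv}\cap N$ and $A_2$ are abelian normal subgroups of $X_{uv}$ with $X_{uv}/A_1\cong X_{uv}/A_2\cong\Sym_n$. For $n\geqslant5$ the normal subgroup $A_1A_2/A_1$ of $\Sym_n$ is $1$, $\Alt_n$, or $\Sym_n$; but if $A_1A_2$ equalled $X_{uv}$, or the index-two subgroup $Y$ with $Y/A_1\cong\Alt_n$, then Itô's theorem would make $X_{uv}$, respectively $Y$, metabelian, contradicting the non-soluble quotient $\Sym_n$, respectively $\Alt_n$. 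Hence $A_1A_2=A_1$, so $A_2\leqslant A_1$, and since both have index $n!$ in $X_{uv}$ we get $A_1=A_2=:A$.

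Now $A=X_{uv}\cap N$ is normalized by $N$ and by $X_{uv}$, hence by $\langle X_{uv},N\rangle=X_v$; likewise $A=X_{uv}\cap N^{g^{-1}}$ is normal in $\langle X_{uv},N^{g^{-1}}\rangle=X_u$ (using that the action of $X_{uv}$ on $\{W_i^{g^{-1}}\}$ is all of $X_u/N^{g^{-1}}\cong\Sym_n$). Since $X_v$ is a maximal subgroup of $X=\SL_n(q)$ in the cases that concern us and $X_u=X_v^{g^{-1}}\neq X_v$, it follows that $\langle X_u,X_v\rangle=X$, so $A\trianglelefteq X$; as $X$ is quasisimple and $A\neq X$, this forces $A\leqslant\Z(X)$. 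For $n\geqslant4$ the cyclic group $\Z(X)/A$ is a normal subgroup of $X_{uv}/A\cong\Sym_n$ of order dividing $\gcd(n,q-1)<n!/2$, hence trivial; so $A=\Z(X)$ and $L_{uv}:=X_{uv}/\Z(X)\cong\Sym_n$. For $n\geqslant5$ with $n\neq6$ this normal subgroup $L_{uv}\cong\Sym_n$ of $G_{uv}$ shows $G_{uv}$ is almost simple with socle $\Alt_n$.

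Finally, since $\Gamma$ is $(G,3)$-arc-transitive, Lemma~\ref{lem15} yields a homogeneous factorization $G_{uv}=G_{tuv}G_{uvw}$ for a $3$-arc $t\rightarrow u\rightarrow v\rightarrow w$ (with $G_{tuv}\cong G_{uvw}$ by $2$-arc-transitivity). As $\Alt_n$ does not appear as $T$ in Table~\ref{tab1} when $n\neq6$, Proposition~\ref{Homogeneous} forces $\Alt_n=\Soc(G_{uv})\leqslant G_{uvw}$, so the valency $|G_{uv}|/|G_{uvw}|$ divides $|G_{uv}/\Alt_n|$, which divides $|\Out(\Alt_n)|=2$; this contradicts the valency being at least $3$ in Hypothesis~\ref{hyp1} (compare the last part of the proof of Corollary~\ref{AS}). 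The remaining values $n\in\{3,4,6\}$ I would treat separately: for $n\in\{4,6\}$ the small derived lengths of $\Sym_n$ and $\Alt_n$ (and the $\Alt_6$ entry of Table~\ref{tab1}) break the Itô step and the final step, while for $n=3$ the subgroup $X_v$ is small; in all three cases I would combine $d\mid(q-1)^{n-1}$ with $d^3\mid|G_v|$ via Lemma~\ref{Factorial} to bound $q$ and then finish by inspection or with \magma. I expect the Itô dichotomy of the second paragraph — and, for $n\in\{3,4,6\}$, finding a replacement for it — to be the main obstacle.
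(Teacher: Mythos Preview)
Your approach is genuinely different from the paper's and the core idea is sound. The paper never introduces the second decomposition $\{W_i^{g^{-1}}\}$; instead it argues that for each prime $r$ the subgroup $\Omega_r(X_{uv}\cap M)$ is a submodule of the deleted permutation module of $\Sy_n$ over $\bbF_r$, uses the (very short) submodule lattice to force $\Omega_r(X_{uv}\cap M)=\Omega_r(X_{vw}\cap M)$, and then pushes this into $\Z(X)$. From that it extracts the divisibility $|X_{uv}\cap M|\mid q-1$ and finishes with the numerical constraints $(q-1)^{n-3}\mid n!$ and $(q-1)^{2n-5}\mid 2fn!$. Your It\^o manoeuvre is a neat structural shortcut: for $n\geqslant5$ it bypasses the module analysis entirely and nails $X_{uv}\cap N=\Z(X)$ in one stroke, which is strictly stronger than what the paper obtains at the comparable stage.

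There is, however, a real gap in your endgame. From $L_{uv}\cong\Sy_n$ you assert that $G_{uv}$ is almost simple with socle $\A_n$, and then invoke Proposition~\ref{Homogeneous}. But $L_{uv}\trianglelefteq G_{uv}$ only gives $\A_n\trianglelefteq G_{uv}$; since $\C_{G_{uv}}(\A_n)\cap L_{uv}=1$ and, for $n\neq6$, $\Aut(\A_n)=\Sy_n$, one gets $G_{uv}=L_{uv}\times\C_{G_{uv}}(\A_n)$, and there is no reason offered why $\C_{G_{uv}}(\A_n)$ must be trivial (for instance, field automorphisms fixing a decomposition defined over the prime field centralise the permutation matrices). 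So Proposition~\ref{Homogeneous} does not apply directly, and the bound ``valency divides $|\Out(\A_n)|=2$'' is not justified. The fix is easy and in fact cleaner than what you wrote: having established $L_{uv}\cong\Sy_n$, the valency equals $|L_v|/|L_{uv}|=(q-1)^{n-1}/\gcd(n,q-1)$, and $(G,3)$-arc-transitivity forces its cube to divide $|G_v|\leqslant 2f(q-1)^{n-1}n!$, giving $(q-1)^{2(n-1)}\leqslant 2f\,\gcd(n,q-1)^3\,n!$. For $q\geqslant5$ and $n\geqslant5$ this fails immediately (already at $n=5$, $q=5$ one has $4^{8}=65536>30000$), and this disposes of $n=6$ as well without reference to Table~\ref{tab1}.

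Two smaller points. Your appeal to $\langle X_u,X_v\rangle=X$ needs $X_v$ maximal in $X$; this holds because for $m=1$ and $q\geqslant5$ the $\calC_2$-subgroup is already maximal in $L$ (not merely a novelty in $G$), hence its preimage is maximal in $X$. And for $n\in\{3,4\}$ your It\^o dichotomy really does break (the relevant normal subgroups of $\Sy_3$ and $\Sy_4$ have derived length at most $2$), so those cases need a genuine separate argument; the paper handles all $n$ uniformly via the module-theoretic route and then the numerical inequalities above, which you could also adopt for $n=3,4$ once you have bounded $|X_{uv}\cap N|$.
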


\begin{proof}
Suppose that $m=1$ while both $\overline{X_{uv}}$ and $\overline{X_{vw}}$ are transitive. Let $M$ be the subgroup of $X$ stabilizing each of $W_1,\dots,W_n$. Then $M=\C_{q-1}^{n-1}$, $\overline{X_v}=X_v/M$, $\overline{X_{uv}}=X_{uv}M/M$ and $\overline{X_{vw}}=X_{vw}M/M$. From the factorization $X_v=X_{uv}X_{vw}$ we deduce that $\overline{X_v}=\overline{X_{uv}}\,\overline{X_{vw}}$. Then since $M$ is abelian, Lemma~\ref{lem9} implies that $\overline{X_{uv}}=\overline{X_{vw}}=\Sy_n$. In particular, $X_{uv}\cap M=\Rad(X_{uv})$ and $X_{vw}\cap M=\Rad(X_{vw})$, and so as $X_{uv}\cong X_{vw}$, we have that $X_{uv}\cap M\cong X_{vw}\cap M$.
As $G_v$ is maximal in $G$, we have $q\geqslant5$ (see~\cite{BHR2013} and~\cite[Table~3.5.H]{KL1990}).

Let $r$ be a prime divisor of $|X_{uv}\cap M|=|X_{vw}\cap M|$. Since $\overline{X_{uv}}=\Sy_n$, we have $X_v=MX_{uv}>M\rtimes\A_n$ such that $\Omega_r(M)$ is the deleted permutation module of $\A_n$ over $\bbF_r$. As $\Omega_r(X_{uv}\cap M)$ is characteristic in $X_{uv}\cap M$, and $X_{uv}\cap M$ is normal in $MX_{uv}$, the elementary abelian $r$-group $\Omega_r(X_{uv}\cap M)$ is normal in $MX_{uv}$ and so is a submodule of the deleted permutation module of $\A_n$. Similarly, $\Omega_r(X_{vw}\cap M)$ is also a permutation submodule of $\A_n$. From~\cite{Mortimer1980} we know that the submodules of the permutation module $\Omega_r(M)$ of $\A_n$ are $0$, $\Omega_r(M)$, and a unique submodule of dimension $1$ if $r$ divides $n$. Therefore, $\Omega_r(X_{uv}\cap M)=\Omega_r(X_{vw}\cap M)$. It follows that $\Omega_r(X_{uv}\cap M)$ is normalized by $g$ as
\begin{align*}
\Omega_r(X_{uv}\cap M)^g&=\Omega_r(\Rad(X_{uv}))^g\\
&=\Omega_r(\Rad(X_{vw}))=\Omega_r(X_{vw}\cap M)=\Omega_r(X_{uv}\cap M).
\end{align*}
Clearly, $\Omega_r(X_{uv}\cap M)=\Omega_r(\Rad(X_{uv}))$ is normal in both $M$ and $X_{uv}$. Moreover, $X_v=MX_{uv}$ and so we conclude that $\Omega_r(X_{uv}\cap M)$ is normal in $\langle M,X_{uv},g\rangle=\langle X_v,g\rangle=X$. This yields $\Omega_r(X_{uv}\cap M)\leqslant\Z(X)$. In particular, $\Omega_r(X_{uv}\cap M)$ is cyclic, which implies that $|X_{uv}\cap M|_r$ divides $q-1$ since $X_{uv}\cap M\leqslant M=\C_{q-1}^{n-1}$.

Now we know that $|X_{uv}\cap M|_r$ divides $q-1$ for each prime divisor $r$ of $|X_{uv}\cap M|$. As a consequence, $|X_{uv}\cap M|$ divides $q-1$, and so $|X_{uv}|=|X_{uv}\cap M||\overline{X_{uv}}|$ divides $(q-1)n!$. Then as $|X_v|$ divides $|X_{uv}|^2$, it follows that $(q-1)^{n-1}n!=|X_v|$ divides $(q-1)^2(n!)^2$. Hence
\begin{equation}\label{eq6}
(q-1)^{n-3}\ \vert\ n!.
\end{equation}
Furthermore, since $\Gamma$ is $(G,3)$-arc-transitive, $(|X_v|/|X_{uv}|)^3$ divides $|G_v|$ and hence divides $|\Out(L)||L_v|=2f(q-1)^{n-1}n!$. Consequently, $(q-1)^{3n-3}(n!)^3=|X_v|^3$ divides $2f(q-1)^{n-1}n!|X_{uv}|^3$, which implies that $(q-1)^{2n-2}(n!)^2$ divides $2f|X_{uv}|^3$. This together with the conclusion that $|X_{uv}|$ divides $(q-1)n!$ leads to
\begin{equation}\label{eq7}
(q-1)^{2n-5}\ \vert\ 2fn!
\end{equation}

First assume $n=3$. Then~\eqref{eq7} turns out to be
\begin{equation}\label{eq8}
(q-1)\ \vert\ 12f.
\end{equation}
We deduce that $p-1\leqslant(p^f-1)/f\leqslant12$ and $2^f-1\leqslant p^f-1\leqslant12f$, which lead to $p\leqslant13$ and $f\leqslant6$, respectively. Checking~\eqref{eq8} for $q=p^f\geqslant3$ with $p\leqslant13$ and $f\leqslant6$ we obtain $q\in\{5,7,9,13,25\}$. However, for these values of $q$, computation in \magma~\cite{magma} shows that there is no nontrivial homogeneous factorization of $X_v$ with the two factors conjugate in $X$, a contradiction.

Next assume $n=4$. Then~\eqref{eq7} turns out to be
\begin{equation}\label{eq9}
(q-1)^3\ \vert\ 48f.
\end{equation}
We deduce that $(p-1)^3\leqslant(p^f-1)^3/f\leqslant48$ and $(2^f-1)^3\leqslant(p^f-1)^3\leqslant48f$, which lead to $p\leqslant3$ and $f\leqslant2$, respectively. However, there is no such pair $(p,f)$ such that $q=p^f\geqslant5$ satisfies~\eqref{eq9}, a contradiction.

Finally assume that $n\geqslant5$. Suppose that $q-1$ is divisible by an odd prime, say $r$. Then we derive from~\eqref{eq6} that $r^{n-3}\leqslant(n!)_r<r^{n/(r-1)}\leqslant r^{n/2}$, which forces $n=5$. However, then~\eqref{eq6} implies that $r^2$ divides $5!=2^3\cdot3\cdot5$, which is not possible. Consequently, $q-1$ is a power of $2$. Then~\eqref{eq6} yields $(q-1)^{n-3}\leqslant(n!)_2<2^n$ and so $q-1<2^{n/(n-3)}\leqslant2^{5/2}$, which implies $q=5$. However, substituting $q=5$ into~\eqref{eq7} we obtain $4^{2n-5}\leqslant2(n!)_2<2^{n+1}$, contradicting $n\geqslant5$.
\end{proof}

\begin{lemma}\label{lem12}
If $m=1$, then $n\leqslant6$.
\end{lemma}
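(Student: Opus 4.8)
The plan is to reduce this to a group factorization of $\Sy_n$ and then quote Lemma~\ref{lem11} together with Lemma~\ref{lem10}. Throughout we keep the standing assumptions of this part of the section, so $m=1$ (whence $n=k\geqslant3$), $\Gamma$ is $(G,3)$-arc-transitive, $X_v=X_{uv}X_{vw}$, and $X_{uv}^g=X_{vw}$; in particular $X_{uv}\cong X_{vw}$. Let $M$ be the kernel of the action of $X_v$ on $\{W_1,\dots,W_n\}$. Since $m=1$ each $W_i$ is a $1$-space, so $M$ consists of matrices that are scalar on every $W_i$; hence $M\cong\C_{q-1}^{n-1}$ is abelian and, by \cite[Proposition~4.2.9]{KL1990}, $\overline{X_v}=X_v/M=\Sy_n$. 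First I would apply the projection $X_v\to X_v/M=\Sy_n$ to the factorization $X_v=X_{uv}X_{vw}$; since the image of a product of subgroups under a homomorphism is the product of the images, this yields the group factorization $\Sy_n=\overline{X_{uv}}\,\overline{X_{vw}}$.

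Next I would observe that $\overline{X_{uv}}$ and $\overline{X_{vw}}$ have the same set of insoluble composition factors. Indeed $X_{uv}\cap M$ is a normal abelian subgroup of $X_{uv}$ and $X_{uv}/(X_{uv}\cap M)\cong\overline{X_{uv}}$, so the insoluble composition factors of $\overline{X_{uv}}$ are exactly those of $X_{uv}$; similarly, those of $\overline{X_{vw}}$ are exactly those of $X_{vw}$. Since $X_{uv}\cong X_{vw}$ (being conjugate under $g$), the two sets coincide. Now suppose for a contradiction that $n\geqslant7$. Then Lemma~\ref{lem10}, applied to $\Sy_n=\overline{X_{uv}}\,\overline{X_{vw}}$, forces both $\overline{X_{uv}}$ and $\overline{X_{vw}}$ to contain $\A_n$, and hence, as $n\geqslant3$, both to be transitive on $\{W_1,\dots,W_n\}$. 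This contradicts Lemma~\ref{lem11}, which says that at least one of $\overline{X_{uv}}$ or $\overline{X_{vw}}$ is intransitive. Therefore $n\leqslant6$.

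The only step that really requires attention is the composition-factor bookkeeping in the second paragraph: one must be confident that factoring out the abelian subgroup $X_{uv}\cap M$ removes only soluble composition factors, and that the isomorphism $X_{uv}\cong X_{vw}$ induced by conjugation by $g$ carries composition factors to composition factors. Both of these are routine, and once the hypothesis of Lemma~\ref{lem10} is verified the result follows immediately with no case analysis. I do not expect the divisibility and counting machinery used in the proof of Lemma~\ref{lem11} to be needed here.
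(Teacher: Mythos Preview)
Your proof is correct and follows essentially the same approach as the paper: quotient by the abelian kernel $M$ to get a factorization of $\Sy_n$, use $X_{uv}\cong X_{vw}$ and the abelianness of $M$ to match insoluble composition factors of the two images, apply Lemma~\ref{lem10} for $n\geqslant7$ to force both images to contain $\A_n$, and then contradict Lemma~\ref{lem11}. Your exposition is slightly more detailed than the paper's, but the logical structure is identical.
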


\begin{proof}
Suppose that $m=1$ and $n\geqslant7$. Let $M$ be the subgroup of $X_v$ stabilizing each of $W_1,\dots,W_n$. Then since $M$ is abelian, $\overline{X_{uv}}$ has the same set of insoluble composition factors as $X_{uv}$ and $\overline{X_{vw}}$ has the same set of insoluble composition factors as $X_{vw}$. Since $X_{uv}\cong X_{vw}$, it follows that $\overline{X_{uv}}$ and $\overline{X_{vw}}$ have the same set of insoluble composition factors. From the factorization $X_v=X_{uv}X_{vw}$ we deduce that $\overline{X_v}=\overline{X_{uv}}\,\overline{X_{vw}}$ and hence by Lemma~\ref{lem10} both $\overline{X_{uv}}$ and $\overline{X_{vw}}$ contain $\A_n$. However, this implies that both $\overline{X_{uv}}$ and $\overline{X_{vw}}$ are transitive, contrary to Lemma~\ref{lem11}. This completes the proof.
\end{proof}

\begin{lemma}\label{lem8}
$m\geqslant2$.
\end{lemma}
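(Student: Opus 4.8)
The plan is to suppose that $m=1$ and derive a contradiction. Combining Lemma~\ref{lem13} (which gives $n\geqslant3$) with Lemma~\ref{lem12}, the hypothesis $m=1$ forces $n\in\{3,4,5,6\}$. By Lemma~\ref{lem11}, one of $\overline{X_{uv}}$, $\overline{X_{vw}}$ is intransitive on $\{W_1,\dots,W_n\}$, and then applying Lemma~\ref{lem1} to the factorization $\Sy_n=\overline{X_v}=\overline{X_{uv}}\,\overline{X_{vw}}$ shows that the other one is transitive. Replacing $\Gamma$ by its reverse digraph if necessary --- which is again $G$-vertex-primitive and $(G,3)$-arc-transitive and simply interchanges the roles of $X_{uv}$ and $X_{vw}$ --- I may assume that $\overline{X_{uv}}$ is intransitive while $\overline{X_{vw}}$ is transitive. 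Write $M\cong\C_{q-1}^{n-1}$ for the kernel of the action of $X_v$ on $\{W_1,\dots,W_n\}$.

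Since $M$ is abelian and $X_{uv}\cong X_{vw}$, the groups $\overline{X_{uv}}$ and $\overline{X_{vw}}$ have the same set of insoluble composition factors; moreover $M\cap X_{uv}\cong M\cap X_{vw}$, so $|\overline{X_{uv}}|=|\overline{X_{vw}}|$. Running through, for each $n\in\{3,4,5,6\}$, the intransitive subgroups and the transitive subgroups of $\Sy_n$ subject to these two constraints and to $\Sy_n=\overline{X_{uv}}\,\overline{X_{vw}}$, I expect to be left with only the following possibilities: either both $\overline{X_{uv}}$ and $\overline{X_{vw}}$ are soluble, or $n=6$ and, in connection with the exceptional factorization $\Sy_6=\Sy_5\cdot\PGL_2(5)$, the pair $(\overline{X_{uv}},\overline{X_{vw}})$ equals $(\A_5,\PSL_2(5))$ or $(\Sy_5,\PGL_2(5))$ with $\overline{X_{uv}}$ a point stabilizer.

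In the exceptional $n=6$ case I would argue as follows. Since $\overline{X_{uv}}$ and $\overline{X_{vw}}$ have trivial soluble radical, $M\cap X_{uv}=\Rad(X_{uv})$ and $M\cap X_{vw}=\Rad(X_{vw})$, whence $(M\cap X_{uv})^g=M\cap X_{vw}$ and $\Omega_r(M\cap X_{uv})^g=\Omega_r(M\cap X_{vw})$ for every prime $r$. Now $\Omega_r(M)$ is a section of the degree-$6$ permutation module for $\overline{X_v}=\Sy_6$ over $\bbF_r$; comparing its submodule lattice under the point stabilizer $\overline{X_{uv}}$ with its submodule lattice under the $2$-transitive group $\overline{X_{vw}}$ (using~\cite{Mortimer1980} for the deleted permutation modules together with the $2$-transitivity of $\PSL_2(5)$ on six points), and using that $\Omega_r(M\cap X_{uv})$ and $\Omega_r(M\cap X_{vw})$ have equal order, I expect to conclude that these two submodules coincide. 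Being normalized by $\overline{X_{uv}}$, by $\overline{X_{vw}}$ and by $g$, this common subgroup is then normal in $\langle X_v,g\rangle=X=\SL_6(q)$, hence contained in $\Z(X)$; since $\Z(\SL_6(q))$ is cyclic, each $\Omega_r(M\cap X_{uv})$ has rank at most $1$, so $M\cap X_{uv}$ is cyclic and $|M\cap X_{uv}|$ divides $q-1$. Then $|X_{uv}|=|M\cap X_{uv}|\,|\overline{X_{uv}}|$ divides $120(q-1)$, while $720(q-1)^{5}=|X_v|=|X_{uv}||X_{vw}|/|X_{uv}\cap X_{vw}|$ divides $|X_{uv}|^2$; these force $(q-1)^{3}\leqslant20$, contradicting $q\geqslant5$. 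For the remaining (soluble) configurations I would run the same module-and-counting mechanism wherever possible, and otherwise appeal to a \magma search as described in Section~\ref{sec5} to rule out the finitely many pairs $(n,q)$ surviving the numerical bounds.

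The step I expect to be the main obstacle is the module-theoretic bookkeeping: one must control $M\cap X_{uv}$ simultaneously as a module for the two distinct groups $\overline{X_{uv}}$ and $\overline{X_{vw}}$, determine exactly when the relevant submodules of $\Omega_r(M)$ coincide, handle the genuinely central pieces (notably the all-ones submodule when $r\mid n$), and deal with the soluble configurations, where the convenient identity $M\cap X_{uv}=\Rad(X_{uv})$ is no longer available.
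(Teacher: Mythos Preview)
Your proposal has a genuine gap at its core. You assert that $M\cap X_{uv}\cong M\cap X_{vw}$ and hence $|\overline{X_{uv}}|=|\overline{X_{vw}}|$, but this does not follow from $X_{uv}\cong X_{vw}$: the conjugation $X_{uv}^g=X_{vw}$ carries $M\cap X_{uv}$ to $M^g\cap X_{vw}$, and since $g\notin X_v$ there is no reason for $M^g$ to equal $M$. Nor is $M\cap X_{uv}$ characteristic in $X_{uv}$ in general --- it is merely a normal abelian subgroup. Your equal-order constraint therefore has no justification, and dropping it leaves many more soluble factorizations of $\Sy_n$ to be handled (for example $\Sy_3=\C_2\cdot\C_3$, or $\Sy_4=\D_8\cdot\Sy_3$), which your sketch does not address beyond a vague appeal to \magma\ after unspecified ``numerical bounds''.

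The paper circumvents this by passing to the Fitting subgroup: since $M\cap X_{uv}$ is a normal abelian subgroup of $X_{uv}$, it lies in $F(X_{uv})$, and as $F$ is characteristic one has $F(X_{uv})\cong F(X_{vw})$ and hence $X_{uv}/F(X_{uv})\cong X_{vw}/F(X_{vw})$. This yields the weaker but correct constraint that $\overline{X_{uv}}$ and $\overline{X_{vw}}$ have isomorphic quotients by nilpotent normal subgroups, producing a short explicit list of pairs $(\overline{X_{uv}},\overline{X_{vw}})$ for $n\in\{3,4,6\}$ which are then eliminated by case-specific arguments (centralizer and Sylow considerations for the small soluble rows, and a module-and-counting argument close to yours for the $\PGL_2(5)/\Sy_5$ row). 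Your treatment of the $n=6$ almost simple case is in the right spirit and your identification $M\cap X_{uv}=\Rad(X_{uv})$ is valid \emph{there}; but note that even in that case the paper does not try to prove the two $\Omega_r$-submodules coincide inside $\Omega_r(M)$ --- it only bounds their common order using the mismatch between the $\PGL_2(5)$- and $\Sy_5$-submodule lattices and then finishes numerically, which is cleaner than the equality argument you outline.
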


\begin{proof}
Suppose that $m=1$. Then we have by Lemmas~\ref{lem13} and~\ref{lem12} that $3\leqslant n\leqslant6$. As $G_v$ is maximal in $G$, we have $q\geqslant5$ (see~\cite{BHR2013} and~\cite[Table~3.5.H]{KL1990}). Let $M$ be the subgroup of $X$ stabilizing each of $W_1,\dots,W_n$. Then $M\cong\C_{q-1}^{n-1}$, $\overline{X_v}=X_v/M$, $\overline{X_{uv}}=X_{uv}M/M$ and $\overline{X_{vw}}=X_{vw}M/M$. From the factorization $X_v=X_{uv}X_{vw}$ we deduce that $\Sy_n=\overline{X_v}=\overline{X_{uv}}\,\overline{X_{vw}}$. Since $X_{uv}\cap M$ and $X_{vw}\cap M$ are normal abelian subgroups of $X_{uv}$ and $X_{vw}$, respectively, we have $X_{uv}\cap M\leqslant F(X_{uv})$ and $X_{vw}\cap M\leqslant F(X_{vw})$. Then as $X_{uv}\cong X_{vw}$, it follows that
\begin{align}\label{eq5}
\overline{X_{uv}}/(F(X_{uv})/(X_{uv}\cap M))&\cong X_{uv}/F(X_{uv})\\\nonumber
&\cong X_{vw}/F(X_{vw})\cong\overline{X_{vw}}/(F(X_{vw})/(X_{vw}\cap M)).
\end{align}
Note that $F(X_{uv})/(X_{uv}\cap M)$ and $F(X_{vw})/(X_{vw}\cap M)$ are both nilpotent. We conclude that the factors $\overline{X_{uv}}$ and $\overline{X_{vw}}$ of the factorization $\Sy_n=\overline{X_{uv}}\,\overline{X_{vw}}$ have isomorphic factor groups by nilpotent subgroups. This shows that, interchanging $\overline{X_{uv}}$ and $\overline{X_{vw}}$ if necessary, either the pair $(\overline{X_{uv}},\overline{X_{vw}})$ lies in Table~\ref{tab3} below, or both $\overline{X_{uv}}$ and $\overline{X_{vw}}$ are transitive. The latter is not possible by Lemma~\ref{lem11}. Thus to finish the proof, we only need to exclude the candidates in Table~\ref{tab3}.

\begin{table}[htbp]
\caption{The pair $(\overline{X_{uv}},\overline{X_{vw}})$ in the proof of Lemma~\ref{lem8}}\label{tab3}
\centering
\begin{tabular}{|l|l|l|l|}
\hline
row & $n$ & $\overline{X_{uv}}$ & $\overline{X_{vw}}$\\
\hline
1 & $3$ & $\C_2$ & $\C_3$\\
2 & $3$ & $\C_2$ & $\Sy_3$\\
3 & $4$ & $\C_2^2$ & $\Sy_3$\\
4 & $4$ & $\C_4$ & $\Sy_3$\\
5 & $4$ & $\D_8$ & $\C_3$\\
6 & $4$ & $\D_8$ & $\Sy_3$\\
7 & $4$ & $\Sy_4$ & $\Sy_3$\\
8 & $6$ & $\PGL_2(5)$ & $\Sy_5$\\
\hline
\end{tabular}
\end{table}

\underline{Rows~1--6.} For these rows, $\bfO_3(X_{uv}\cap M)$ is a Sylow $3$-subgroup of $X_{uv}$ and $|\overline{X_{vw}}|_3=3$. Note that $\bfO_3(X_{uv}\cap M)$ is a normal abelian subgroup of $X_{uv}$. Then from $X_{uv}\cong X_{vw}$ we conclude that $X_{vw}$ has a unique Sylow $3$-subgroup $P$ and $P$ is a normal abelian subgroup of $X_{vw}$. Thus $MP$ is normal in $MX_{vw}$ and contains every $3$-element of $MX_{vw}$. Since $P$ and $X_{vw}\cap M$ are both normal abelian subgroups of $X_{vw}$, we have $P\leqslant F(X_{vw})$ and $X_{vw}\cap M\leqslant F(X_{vw})$. Thus $\bfO_r(X_{vw}\cap M)$ is centralized by $P$ for every prime $r\neq3$. As $\bfO_r(X_{vw}\cap M)$ is centralized by $M$, it follows that $\bfO_r(X_{vw}\cap M)$ is centralized by $MP$ and hence by every $3$-element of $MX_{vw}$. Since $|MX_{vw}|_3=|X_v|_3$, for any $3$-element $y$ of $X_v$ there exists $z\in X_v$ such that $zyz^{-1}\in MX_{vw}$ and so $zyz^{-1}\in\Cen_{X_v}(\bfO_r(X_{vw}\cap M))$, which is equivalent to $y\in\Cen_{X_v}(\bfO_r((X_{vw}\cap M)^z))$.

First assume $n=3$. Write each element $x$ of $M$ as $x=(x_1,x_2,x_3)$, where $x_i\in\GL_1(q)$ for $1\leqslant i\leqslant3$ such that $x_1x_2x_3=1$. Let
\[
y=
\begin{pmatrix}
0&1&0\\
0&0&1\\
1&0&0
\end{pmatrix}.
\]
Then $y\in X_v$ and $y$ has order $3$. Thus, for any prime $r\neq3$ there exists $z\in X_v$ such that $\bfO_r((X_{vw}\cap M)^z)$ is centralized by $y$. It follows that for any $x=(x_1,x_2,x_3)\in\bfO_r((X_{vw}\cap M)^z)\leqslant M$, the conclusion $x^y=x$ gives $(x_2,x_3,x_1)=(x_1,x_2,x_3)$ and then the condition $x_1x_2x_3=1$ implies that $x$ has order dividing $3$. Hence for any prime $r\neq3$, $\bfO_r((X_{vw}\cap M)^z)=1$, that is, $|X_{vw}\cap M|_r=1$. Accordingly, $|X_{vw}\cap M|$ is a power of $3$. Since $|X_v|$ divides $|X_{vw}|^2$ and $|X_{vw}|=|\overline{X_{vw}}||X_{vw}\cap M|$ divides $6|X_{vw}\cap M|$, we deduce that $|X_v|$ divides $(6|X_{vw}\cap M|)^2$. As $|X_v|=6(q-1)^2$, it then follows that $q-1$ is a power of $3$, which contradicts Mih\v{a}ilescu's theorem~\cite{Mihailescu2004} as $q\geqslant5$.

Next assume $n=4$. Write each element $x$ of $M$ as $x=(x_1,x_2,x_3,x_4)$, where $x_i\in\GL_1(q)$ for $1\leqslant i\leqslant3$ such that $x_1x_2x_3x_4=1$. Let
\[
y=
\begin{pmatrix}
0&1&0&0\\
0&0&1&0\\
1&0&0&0\\
0&0&0&1
\end{pmatrix}.
\]
Then $y\in X_v$ and $y$ has order $3$, so there exists $z\in X_v$ such that $\bfO_2((X_{vw}\cap M)^z)$ is centralized by $y$. Thus, for any $x=(x_1,x_2,x_3,x_4)\in\bfO_2((X_{vw}\cap M)^z)\leqslant M$, we deduce from the conclusion $x^y=x$ that $(x_2,x_3,x_1,x_4)=(x_1,x_2,x_3,x_4)$, and so the condition $x_1x_2x_3x_4=1$ implies that $x=(x_1,x_1,x_1,x_1^{-3})$. Consequently, $|\bfO_2(X_{vw}\cap M)|=|\bfO_2((X_{vw}\cap M)^z)|$ divides $q-1$ and hence $(q-1)_2$. Since $|\overline{X_{vw}}|_2$ divides $2$, we then conclude that $|X_{vw}|_2$ divides $2(q-1)_2$. This is contrary to the condition that $|X_{vw}|^2$ is divisible by $|X_v|=24(q-1)^3$.

\underline{Row~7.} Since the normal nilpotent subgroups of $\overline{X_{uv}}$ are $1$ and $\C_2^2$, we derive from~\eqref{eq5} that $F(X_{uv})/(X_{uv}\cap M)=\C_2^2$ and $F(X_{vw})/(X_{vw}\cap M)=1$. In particular, $F(X_{vw})=X_{vw}\cap M$ is abelian, which implies that $F(X_{uv})$ is abelian. Consequently, $X_{uv}\cap M$ is centralized by $F(X_{uv})$ and hence by $MF(X_{uv})$. For any element $x=(x_1,x_2,x_3,x_4)$ in $X_{uv}\cap M$, where $x_i\in\GL_1(q)$ for $1\leqslant i\leqslant4$ such that $x_1x_2x_3x_4=1$, we then have $x_1=x_2=x_3=x_4$ since $MF(X_{uv})/M\cong F(X_{uv})/(X_{uv}\cap M)=\C_2^2$ is a transitive subgroup of $\Sy_4$. This further implies that $x_1^4=1$, whence $|X_{uv}\cap M|$ divides $4$. Now $|X_{uv}|=|\overline{X_{uv}}||X_{uv}\cap M|=24|X_{uv}\cap M|$ divides $24\cdot4$ while $24(q-1)^3=|X_v|$ divides $|X_{uv}|^2$. We deduce that $(q-1)^3$ divides $24\cdot4^2$, which leads to $q=5$. It follows that $\Gamma$ has valency $|X_v|/|X_{uv}|$ which is divisible by $(q-1)^3/4=4^2$, and so $|G_v|$ is divisible by $4^6$ as $\Gamma$ is $(G,3)$-arc-transitive. However, $|G_v|=|L_v||\Out(L)|$ divides $24(q-1)^3\cdot2=3\cdot4^5$, a contradiction.

\underline{Row~8.} Since $X_{uv}/(X_{uv}\cap M)$ and $X_{vw}/(X_{vw}\cap M)$ are almost simple groups, we have $X_{uv}\cap M=\Rad(X_{uv})$ and $X_{vw}\cap M=\Rad(X_{vw})$. Consequently, $X_{uv}\cap M\cong X_{vw}\cap M$ as $X_{uv}\cong X_{vw}$. Let $r$ be a prime divisor of $|X_{uv}\cap M|=|X_{vw}\cap M|$. Then $\Omega_r(X_{uv}\cap M)\cong\Omega_r(X_{vw}\cap M)>1$. Note that $\Omega_r(M)$ is a permutation module of both $\overline{X_{uv}}=\PGL_2(5)$ and $\overline{X_{vw}}=\Sy_5$ over $\bbF_r$. As $\Omega_r(X_{uv}\cap M)$ is characteristic in $X_{uv}\cap M$ and $X_{uv}\cap M$ is normal in $MX_{uv}$, the elementary abelian $r$-group $\Omega_r(X_{uv}\cap M)$ is normal in $MX_{uv}$ and so is a permutation submodule of $\PGL_2(5)$. For the same reason, $\Omega_r(X_{vw}\cap M)$ is a permutation submodule of $\Sy_5$. From~\cite{Mortimer1980} we know that all the submodules of the permutation module $\Omega_r(M)$ of $\PGL_2(5)$ are $0$, $\Omega_r(M)$, a unique submodule of dimension $1$ and a unique submodule of dimension $5$. Therefore, $|\Omega_r(X_{uv}\cap M)|=|\Omega_r(X_{vw}\cap M)|=r$, $r^5$ or $r^6$. If $|\Omega_r(X_{vw}\cap M)|\geqslant r^5$, then by~\cite{Mortimer1980} the permutation module $\Omega_r(X_{vw}\cap M)$ of $\Sy_5$ has a submodule of dimension $4$, which implies that $\Omega_r(\Rad(X_{vw}))=\Omega_r(X_{vw}\cap M)$ contains a normal subgroup of $X_{vw}$ of order $r^4$. This would further imply that $\Omega_r(X_{uv}\cap M)$ contains a normal subgroup of $X_{uv}$ of order $r^4$, and so $\Omega_r(X_{uv}\cap M)$ contains a submodule of $\overline{X_{uv}}=\PGL_2(5)$ of dimension $4$. However, the permutation module $\Omega_r(M)$ of $\PGL_2(5)$ has no submodule of dimension $4$, a contradiction. Thus $|\Omega_r(X_{uv}\cap M)|=|\Omega_r(X_{vw}\cap M)|=r$.

Now we have $|\Omega_r(X_{uv}\cap M)|=r$ for each prime divisor $r$ of $|X_{uv}\cap M|$. This implies that $\bfO_r(X_{uv}\cap M)$ is cyclic and hence has order dividing $q-1$ for each prime divisor $r$ of $|X_{uv}\cap M|$. Consequently, $|X_{uv}\cap M|$ divides $q-1$. It follows that $|X_{uv}|=|\overline{X_{uv}}||X_{uv}\cap M|=120|X_{uv}\cap M|$ divides $120(q-1)$ while $|\Sy_6|(q-1)^5=|X_v|$ divides $|X_{uv}|^2$. We deduce that $|\Sy_6|(q-1)^3$ divides $120^2$, which contradicts $q\geqslant5$.
\end{proof}

\begin{lemma}\label{lem14}
If $m=2$, then $q\geqslant4$.
\end{lemma}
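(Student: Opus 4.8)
The plan is to suppose, for a contradiction, that $m=2$ and $q\in\{2,3\}$, so $n=2k$ with $k\geqslant2$ (recall $n\geqslant3$ by Lemma~\ref{lem13} and $m\geqslant2$ by Lemma~\ref{lem8}). I would first remove the case $q=2$: since $\GL_2(2)=\Sp_2(2)$, the $\calC_2$-geometric subgroup of type $\GL_2(2)\wr\Sy_k$ lies inside the $\calC_8$-subgroup $\Sp_{2k}(2)$ of $\SL_{2k}(2)$ and so is not maximal, contradicting the maximality of $G_v$ (see also~\cite[Table~3.5.H]{KL1990}). So from now on $q=3$. Keeping the notation of this section, let $M$ be the kernel of the map $X_v\to\overline{X_v}=\Sy_k$, so that $M=(\GL_2(3)\times\cdots\times\GL_2(3))\cap\SL_{2k}(3)$. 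The point on which everything hangs is that $\GL_2(3)$ is soluble, so $M$ is soluble; hence $X_{uv}\cap M\leqslant\Rad(X_{uv})$ and $X_{vw}\cap M\leqslant\Rad(X_{vw})$, which gives $X_{uv}/\Rad(X_{uv})\cong\overline{X_{uv}}/\Rad(\overline{X_{uv}})$ and likewise for $X_{vw}$, where $\overline{X_{uv}}=X_{uv}M/M$ and $\overline{X_{vw}}=X_{vw}M/M$. As $X_{uv}\cong X_{vw}$ (they are conjugate in $X$ via $g$), these quotients are isomorphic; in particular $\overline{X_{uv}}$ and $\overline{X_{vw}}$ have the same insoluble composition factors, and $\Sy_k=\overline{X_v}=\overline{X_{uv}}\,\overline{X_{vw}}$.

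Now I would split on $k$. If $k\geqslant7$, Lemma~\ref{lem10} applied to $\Sy_k=\overline{X_{uv}}\,\overline{X_{vw}}$ (whose factors share insoluble composition factors) forces $\overline{X_{uv}}\geqslant\A_k$ and $\overline{X_{vw}}\geqslant\A_k$; in particular each is transitive on $\{W_1,\dots,W_k\}$. If $k\in\{5,6\}$, a direct analysis using Lemmas~\ref{lem1} and~\ref{lem4} and the classification of $j$-homogeneous groups, combined with $5\nmid|M|$ (so that $5\nmid|X_{uv}\cap M|$ and thus the transitive factor cannot be paired with a soluble intransitive one of equal order), shows that the soluble cases of the factorization cannot occur and reduces the isomorphism type of $\overline{X_{uv}}/\Rad(\overline{X_{uv}})\cong\overline{X_{vw}}/\Rad(\overline{X_{vw}})$ to a short list — for $k=6$ this list includes $\Sy_5$, which is exactly the configuration treated by the \magma illustration in the Computational methods subsection. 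In the transitive cases, I would feed the factorization $X_v=X_{uv}X_{vw}$ into Lemma~\ref{lem3} with $R=\GL_2(3)$ and $T=\SL_2(3)$ (legitimate since $\SL_2(3)\wr\Sy_k\leqslant X_v\leqslant\GL_2(3)\wr\Sy_k$) to control the projections $\varphi_i(X_{uv}\cap M)$, and then use the submodule structure of the relevant permutation modules of $\A_k$ over small prime fields, as in~\cite{Mortimer1980} and the proof of Lemma~\ref{lem11}, to identify the characteristic subgroups of $X_{uv}\cap M$ that end up normal in $X_v$; Lemma~\ref{lem26} then forbids such a subgroup from being normalized by $g$, while the divisibility conditions that $|X_v|$ divides $|X_{uv}|^2$ and that $\bigl(|X_v|/|X_{uv}|\bigr)^{3}$ divides $|\Out(L)|\,|L_v|$ (the latter from $(G,3)$-arc-transitivity) should rule out all $k\geqslant7$ and leave only finitely many concrete groups to handle.

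For $k\leqslant6$, equivalently $n\leqslant12$, the group $X_v$ has moderate order and I would finish by the kind of \magma computation described in the Computational methods subsection: for each of the finitely many groups $X_v$ surviving the reductions above — the largest being the $\calC_2$-subgroup of type $\GL_2(3)\wr\Sy_6$ in $\SL_{12}(3)$ — one verifies that there is no factorization $X_v=HK$ with $H$ and $K$ conjugate in $X$ and with $|H|$ divisible by the value forced by the $(G,3)$-arc-transitivity constraints. In every case no such factorization exists, yielding the desired contradiction.

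The main obstacle I anticipate is the regime $q=3$, $k\geqslant5$. Here the component $\SL_2(3)$ is soluble, so the usual leverage from insolubility of the components — which is what powers the later lemmas handling $m=2$ with $q\geqslant4$ and $m\geqslant3$ — is absent, and the soluble part $X_{uv}\cap M\leqslant M$ is only weakly pinned down by the action of $\overline{X_{uv}}$ on the blocks. Extracting enough structure from $X_{uv}\cap M$ to simultaneously kill all large $k$ by module-theoretic and order arguments and to reduce the cases $k\leqslant6$ to a \magma computation of feasible size is where the real difficulty lies.
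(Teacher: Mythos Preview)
Your plan matches the paper's approach in all essentials: eliminate $q=2$ by non-maximality (the paper cites \cite[Proposition~2.3.6]{BHR2013}), then for $q=3$ exploit the solubility of $M$ to see that $\overline{X_{uv}}$ and $\overline{X_{vw}}$ share insoluble composition factors, invoke Lemma~\ref{lem10} for $k\geqslant7$, and handle small $k$ by \magma. The paper disposes of $2\leqslant k\leqslant5$ by a direct \magma\ search and is left with $k\geqslant6$, splitting into (i) both $\overline{X_{uv}},\overline{X_{vw}}\geqslant\A_k$ and (ii) $k=6$ with both factors of socle $\A_5$; case~(ii) is precisely the \magma\ computation you cite from the Computational methods subsection.

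One point where your sketch diverges from what is actually needed: in case~(i) the characteristic subgroup you locate --- the paper takes $\Omega_2((X_{uv}\cap M)')$, which sits inside $\Z(M')=\C_2^k$ viewed as the permutation module for $\Sy_k$ --- \emph{is} normalized by $g$ (by the Mortimer submodule analysis it coincides with $\Omega_2((X_{vw}\cap M)')$), so Lemma~\ref{lem26} does not give an immediate contradiction. Because $X=\SL_n(q)$ acts on $\Gamma$ with kernel $\Z(X)$, the correct conclusion is that this subgroup lies in $\Z(X)$; hence every nontrivial element of it has all $k$ projections $\varphi_i$ nontrivial. The paper uses this to bound $|(X_{uv}\cap M)'|_2\leqslant8$, and then finishes by combining the $(G,3)$-arc-transitivity divisibility constraints at the primes $2$ and $3$ (via Lemma~\ref{lem3}) with a subsidiary \magma\ check on subgroups $Y\leqslant\GL_2(3)^3$ satisfying $|Y'|_2\leqslant8$, $|Y|_3>3$ and $24\mid|\varphi_i(Y)|$. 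So your instinct that case~(i) is where the real work lies is exactly right, but the mechanism is a constraint (landing in $\Z(X)$) rather than a direct prohibition.
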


\begin{proof}
Suppose $m=2$ and $q=2$ or $3$. Since $G_v$ is maximal in $G$, \cite[Proposition~2.3.6]{BHR2013} implies that $q=3$. Let $M$ be the subgroup of $X_v$ stabilizing each of $W_1,\dots,W_k$ and let $\varphi_i$ be the action of $M$ on $W_i$ for $1\leqslant i\leqslant k$. Then $M$ is normal in $X_v$, $X_v/M=\overline{X_v}=\Sy_k$, $X_{uv}M/M=\overline{X_{uv}}$ and $X_{vw}M/M=\overline{X_{vw}}$. Note that $\SL_2(3)\wr\Sy_k\leqslant X_v\leqslant\GL_2(3)\wr\Sy_k$. In fact, $X_v=(\SL_2(3)^k\rtimes\C_2^{k-1}).\Sy_k$. For $2\leqslant k\leqslant5$, computation in \magma~\cite{magma} shows that there is no nontrivial homogeneous factorization of $X_v$ with the two factors conjugate in $X$. Therefore, $k\geqslant6$.

From the factorization $X_v=X_{uv}X_{vw}$ we deduce that $\Sy_k=\overline{X_v}=\overline{X_{uv}}\,\overline{X_{vw}}$. Since $X_{uv}\cong X_{vw}$ and $M$ is soluble, we conclude that $\overline{X_{uv}}$ and $\overline{X_{vw}}$ have the same set of insoluble composition factors. If $k\geqslant7$, then by Lemma~\ref{lem10}, both $\overline{X_{uv}}$ and $\overline{X_{vw}}$ contain $\A_k$. If $k=6$, then since $X_{uv}\cong X_{vw}$ and $|M|_5=1$, the two factors of the factorization $\Sy_6=\overline{X_{uv}}\,\overline{X_{vw}}$ both have order divisible by $5$. This together with the condition that $\overline{X_{uv}}$ and $\overline{X_{vw}}$ have the same set of insoluble composition factors implies that $\overline{X_{uv}}$ and $\overline{X_{vw}}$ are both almost simple groups with socle $\A_5$ or $\A_6$. To sum up, we have two cases:
\begin{itemize}
\item[(i)] both $\overline{X_{uv}}$ and $\overline{X_{vw}}$ contain $\A_k$;
\item[(ii)] $k=6$ and both $\overline{X_{uv}}$ and $\overline{X_{vw}}$ are almost simple groups with socle $\A_5$.
\end{itemize}
In particular, $\overline{X_{uv}}$ and $\overline{X_{vw}}$ are always almost simple groups. Accordingly, $X_{uv}\cap M=\Rad(X_{uv})\cong\Rad(X_{vw})=X_{vw}\cap M$ and so $\overline{X_{uv}}\cong\overline{X_{vw}}$.

First assume that~(i) occurs. Then $\overline{X_{uv}}=\overline{X_{vw}}=\Sy_k$. Note that $\Omega_2(M')=\Z(M')=\Z(M)=\C_2^k$. We have $\Omega_2((X_{uv}\cap M)')\leqslant\Omega_2(M')=\Z(M)$. As a consequence, $\Omega_2((X_{uv}\cap M)')$ is normal in $M$. Also, $\Omega_2((X_{uv}\cap M)')$ is normal in $X_{uv}$ since it is characteristic in $X_{uv}\cap M$ and $X_{uv}\cap M$ is normal in $X_{uv}$. Hence $\Omega_2((X_{uv}\cap M)')$ is normal in $MX_{uv}$. Since $\overline{X_{uv}}=\Sy_k$, we see that $\Omega_2((X_{uv}\cap M)')$ is a submodule of the permutation module $\Z(M)$ of $\Sy_k$ over $\bbF_2$. Similarly, $\Omega_2((X_{vw}\cap M)')$ is a submodule of the same permutation module $\Z(M)$ of $\Sy_k$. Since $X_{uv}\cong X_{vw}$, we derive that
\[
\Omega_2((X_{uv}\cap M)')=\Omega_2(\Rad(X_{uv})')\cong\Omega_2(\Rad(X_{vw})')=\Omega_2((X_{vw}\cap M)').
\]
From~\cite{Mortimer1980} we know that all the submodules of the permutation module $\Z(M)$ of $\Sy_k$ are $0$, $\Z(M)$, a submodule of dimension $1$ and a submodule of dimension $k-1$. Therefore, $\Omega_2((X_{uv}\cap M)')=\Omega_2((X_{vw}\cap M)')$ and is normal in both $X_{uv}$ and $X_{vw}$ and hence in $\langle X_{uv},X_{vw}\rangle=X_v$. Moreover,
\begin{align*}
\Omega_2((X_{uv}\cap M)')^g&=\Omega_2(\Rad(X_{uv})')^g\\
&=\Omega_2(\Rad(X_{vw})')=\Omega_2((X_{vw}\cap M)')=\Omega_2((X_{uv}\cap M)').
\end{align*}
Thus $\Omega_2((X_{uv}\cap M)')$ is normal in $\langle X_v,g\rangle=X$, and so $\Omega_2((X_{uv}\cap M)')\leqslant\Z(X)$. In particular, any nontrivial $z\in\Omega_2((X_{uv}\cap M)')$ will satisfy $\varphi_i(z)\neq1$ for all $1\leqslant i\leqslant k$. Suppose $|(X_{uv}\cap M)'|_2>8$. Then since $\varphi_1((X_{uv}\cap M)')\leqslant\varphi_1(M')=\SL_2(3)$ and $|\SL_2(3)|_2=8$, there exist $2$-elements $x$ and $y$ of $(X_{uv}\cap M)'$ such that $x\neq y$ and $\varphi_1(x)=\varphi_1(y)$. Note that every $2$-element of $M'=\SL_2(3)^k$ has order dividing $4$. If $xy^{-1}$ has order $2$, then $xy^{-1}$ is a nontrivial element in $\Omega_2((X_{uv}\cap M)')$ with $\varphi_1(xy^{-1})=\varphi_1(x)\varphi_1(y)^{-1}=1$, a contradiction. If $xy^{-1}$ has order $4$, then $(xy^{-1})^2$ is a nontrivial element in $\Omega_2((X_{uv}\cap M)')$ with $\varphi_1((xy^{-1})^2)=(\varphi_1(x)\varphi_1(y)^{-1})^2=1$, still a contradiction. Thus $|(X_{uv}\cap M)'|_2\leqslant8$.

Since $\overline{X_{uv}}$ is transitive, Lemma~\ref{lem3} implies that
\[
\varphi_1(X_{uv}\cap M)\cong\cdots\cong\varphi_k(X_{uv}\cap M)
\]
and $\pi(\varphi_1(X_{uv}\cap M))\supseteq\pi(\SL_2(3))=\{2,3\}$. Recall that $\overline{X_{uv}}=\overline{X_{vw}}=\Sy_k$. If $|\varphi_1(X_{uv}\cap M)|_2\leqslant4$, then the valency of $\Gamma$ has $2$-part
\[
\frac{|X_v|_2}{|X_{uv}|_2}=\frac{|M|_2}{|X_{uv}\cap M|_2}\geqslant\frac{|M|_2}{|\varphi_1(X_{uv}\cap M)|_2^k}\geqslant\frac{|M|_2}{4^k}=2^{2k-1}
\]
and so since $\Gamma$ is $(G,3)$-arc-transitive, $|G_v|_2\geqslant2^{3(2k-1)}$. However,
\[
|G_v|_2\leqslant|\Out(L)|_2|L_v|_2=2^{4k}(k!)_2<2^{5k},
\]
a contradiction. Thus $|\varphi_1(X_{uv}\cap M)|_2\geqslant8$, which in conjunction with the conclusion $\pi(\varphi_1(X_{uv}\cap M))\supseteq\{2,3\}$ indicates that $|\varphi_1(X_{uv}\cap M)|$ is divisible by $24$.

Let $\varphi_{1,2,3}$ be the action of $M$ on $W_1\oplus W_2\oplus W_3$, and $Y=\varphi_{1,2,3}(X_{uv}\cap M)$. Then
\[
|Y'|_2=|\varphi_{1,2,3}((X_{uv}\cap M)')|_2\leqslant|(X_{uv}\cap M)'|_2\leqslant8.
\]
Clearly, $\varphi_i(Y)=\varphi_i(X_{uv}\cap M)$ for $i=1,2,3$. Thus $|\varphi_1(Y)|=|\varphi_2(Y)|=|\varphi_3(Y)|$ is divisible by $24$. Since $\Gamma$ is $(G,3)$-arc-transitive and the $3$-part of the valency of $\Gamma$ is
\[
\frac{|X_v|_3}{|X_{uv}|_3}\geqslant\frac{|M|_3}{|X_{uv}\cap M|_3}=\frac{3^k}{|X_{uv}\cap M|_3},
\]
we deduce that
\[
\left(\frac{3^k}{|X_{uv}\cap M|_3}\right)^3\leqslant|G_v|_3\leqslant|\Out(L)|_3|L_v|_3=3^k(k!)_3<3^{3k/2}
\]
and hence $|X_{uv}\cap M|_3>3^{k/2}$. On the other side, $|X_{uv}\cap M|_3\leqslant|Y|_3^{\lceil k/3\rceil}$. It follows that $|Y|_3^{\lceil k/3\rceil}>3^{k/2}$, which implies $|Y|_3>3$. However, by a \magma~\cite{magma} computation there is no subgroup $Y$ of $\varphi_{1,2,3}(M)\leqslant\GL_2(3)\times\GL_2(3)\times\GL_2(3)$ with $|Y'|_2\leqslant8$ and $|Y|_3>3$ such that $|\varphi_1(Y)|=|\varphi_2(Y)|=|\varphi_3(Y)|$ is divisible by $24$, a contradiction.

Next assume that~(ii) occurs. Since $\overline{X_{uv}}\cong\overline{X_{vw}}$, we deduce from the factorization $\Sy_6=\overline{X_{uv}}\,\overline{X_{vw}}$ that $\overline{X_{uv}}\cong\overline{X_{vw}}\cong\Sy_5$. Since $\Gamma$ is $(G,3)$-arc-transitive, $(|X_v|/|X_{uv}|)^3$ divides $|G_v|$ and hence divides $|\Out(L)||L_v|=2^{28}\cdot3^8\cdot5$. Consequently, $2^{81}\cdot3^{24}\cdot5^3=|X_v|^3$ divides $2^{28}\cdot3^8\cdot5|X_{uv}|^3$, which implies that $2^{15}\cdot3^6\cdot5$ divides $|X_{uv}|$. However, computation in \magma~\cite{magma} shows that for $X=\SL_{12}(3)$ there is no homogeneous factorization $X_v=X_{uv}X_{vw}$ with $|X_{uv}|$ divisible by $2^{15}\cdot3^6\cdot5$ and $X_{uv}/\Rad(X_{uv})\cong X_{vw}/\Rad(X_{vw})\cong\Sy_5$, a contradiction.
\end{proof}

We are now able to rule out $\calC_2$-subgroups.

\begin{lemma}\label{lem18}
If $G_v$ is a $\calC_2$-subgroup of $G$, then $\Gamma$ is not $(G,3)$-arc-transitive.
\end{lemma}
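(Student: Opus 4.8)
The plan is to argue by contradiction and extend the analysis of Lemmas~\ref{lem11}--\ref{lem14} to all $m\geqslant2$. Suppose $\Gamma$ is $(G,3)$-arc-transitive. Since $G$ is almost simple, $\Gamma$ is not a directed cycle, so Lemma~\ref{lem5} gives valency at least $3$ and Hypothesis~\ref{hyp1} holds, together with all the $\calC_2$ consequences recorded above: $X=\SL_n(q)$ acts $2$-arc-transitively, $X_v=X_{uv}X_{vw}$ with $X_{uv}^g=X_{vw}$ and $\langle X_v,g\rangle=X$, and $X_v$ stabilizes a decomposition $V=W_1\oplus\dots\oplus W_k$ with $\dim W_i=m$, $n=mk$, $k\geqslant2$ and $\overline{X_v}=\Sy_k$. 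By Lemma~\ref{lem8} we have $m\geqslant2$, and by Lemma~\ref{lem14} we have $q\geqslant4$ when $m=2$, so in every remaining case $\SL_m(q)$ is quasisimple and $S_0:=\PSL_m(q)$ is a nonabelian simple group. Write $M$ for the kernel of the action of $X_v$ on $\{W_1,\dots,W_k\}$, $\varphi_i\colon M\to\GL_m(q)$ for the $i$-th block projection, and $N=\SL_m(q)^k\leqslant M$ for the product of the $k$ components of $X_v$; thus $N$ is characteristic in $X_v$, $M/N$ is abelian, and $X_v/M\cong\Sy_k$.

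The target of the argument is to show that $g$ normalizes $N$. Once this is known, $N$ is normal in $\langle X_v,g\rangle=X$, and since $N\leqslant X_v$ while $X$ is transitive on the vertex set with kernel $\Z(X)$, we get $N\leqslant\bigcap_{h\in X}X_v^h=\Z(X)$, which is impossible because $|N|=|\SL_m(q)|^k>|\Z(X)|$; this is the mechanism behind Lemma~\ref{lem26}. To reach it I would prove that the factor $X_{uv}$---and, by the symmetry of the hypotheses $X_v=X_{uv}X_{vw}$ and $X_{uv}\cong X_{vw}$, also $X_{vw}$---contains $N$, and then deduce $N^g=N$ from the fact that any subgroup of $X_v$ containing $N=\E(X_v)$ has $N$ as its layer.

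So the crux is to force $X_{uv}\geqslant N$, and here I would follow, now for general $m\geqslant2$, the template of Lemmas~\ref{lem11}--\ref{lem14}. From $X_v=X_{uv}X_{vw}$ one has $\Sy_k=\overline{X_{uv}}\,\overline{X_{vw}}$, so by Lemma~\ref{lem1} at least one of $\overline{X_{uv}}$, $\overline{X_{vw}}$ is transitive; that the other is transitive too (barring small $k$) should come from a subspace-configuration argument in the style of Lemma~\ref{lem16}, since an intransitive $\overline{X_{vw}}$ would force $X_{uv}$ to stabilize a nonzero proper subspace of $V$ while acting transitively on the $k$ blocks. With both transitive, Lemma~\ref{lem9} leaves only $\overline{X_{uv}}=\overline{X_{vw}}=\Sy_k$ or ``$M$ has a section isomorphic to $\A_k$, $\Sy_3$ or $\A_5$''; in the latter case, since any nonabelian simple section of $M\leqslant\GL_m(q)^k$ is already a section of $\GL_m(q)$, a section $\A_k$ with $k\geqslant5$ bounds $k$ severely in terms of $m$, and the resulting finitely many configurations can be treated directly. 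In the main case $\overline{X_{uv}}=\overline{X_{vw}}=\Sy_k$, Lemma~\ref{lem3} with $R=\GL_m(q)$ and $T=\SL_m(q)$ gives $\varphi_1(X_{uv}\cap M)\cong\dots\cong\varphi_k(X_{uv}\cap M)$ and $\pi(\SL_m(q))\subseteq\pi(\varphi_1(X_{uv}\cap M))$; combining the bound $|X_{uv}|\leqslant k!\,|\varphi_1(X_{uv}\cap M)|^k$ with $|X_v|\mid|X_{uv}|^2$ and the divisibility $(|X_v|/|X_{uv}|)^3\mid 2f|X_v|$ supplied by $(G,3)$-arc-transitivity should force $\varphi_1(X_{uv}\cap M)$ to be large, and a Goursat-type analysis of $X_{uv}\cap N\leqslant N=\SL_m(q)^k$ using simplicity of $S_0$ should upgrade this to $X_{uv}\geqslant N$. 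The small residual cases---$k$ small, $q$ small, and the finitely many exceptional isomorphisms $S_0\cong\A_j$---I would settle by computation in \magma, as in Lemmas~\ref{lem13} and~\ref{lem14}.

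The main obstacle is precisely this block of steps pinning down $X_{uv}$: controlling the transitivity of $\overline{X_{vw}}$ (a $\calC_2$-level counterpart of Lemma~\ref{lem11}), the submodule-theoretic control of $X_{uv}\cap M$ via its block projections (the analogue here of the appeal to~\cite{Mortimer1980} in Lemmas~\ref{lem11} and~\ref{lem14}), the subdirect-product analysis of $X_{uv}\cap N$, and the clearing of the residual small cases by machine; once $X_{uv}\geqslant N$ is in hand the proof closes immediately via the normal-subgroup mechanism of Lemma~\ref{lem26} described above.
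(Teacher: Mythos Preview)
Your endgame is right---once both $X_{uv}$ and $X_{vw}$ contain $N=M'=\SL_m(q)^k$, then $N$ is the layer of each and $N^g=N$ gives the contradiction with Lemma~\ref{lem26}---but the route you propose to reach $X_{uv}\geqslant N$ has a real gap, and it is exactly the step you flag as the ``main obstacle''. You want to establish \emph{up front} that both $\overline{X_{uv}}$ and $\overline{X_{vw}}$ are transitive (in order to invoke Lemma~\ref{lem9}), and you suggest this ``should come from a subspace-configuration argument in the style of Lemma~\ref{lem16}, since an intransitive $\overline{X_{vw}}$ would force $X_{uv}$ to stabilize a nonzero proper subspace''. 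That implication is backwards: an intransitive $\overline{X_{vw}}$ means $X_{vw}$ (not $X_{uv}$) stabilizes a partial sum $\bigoplus_{i\in I}W_i$, and there is no $\calC_1$-type geometric contradiction available here. Lemma~\ref{lem11} (the $m=1$ case) goes the other way: it shows that if $m=1$ then at least one of $\overline{X_{uv}}$, $\overline{X_{vw}}$ is \emph{in}transitive, so there is no analogue forcing both to be transitive for $m\geqslant2$.

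The paper's proof reverses your order of operations. It assumes only that $\overline{X_{uv}}$ is transitive (Lemma~\ref{lem1}), then applies Lemma~\ref{lem3} and the classification result \cite[Theorem~4]{LPS2000} to $\varphi_1(X_{uv}\cap M)Z/Z$: either this quotient is almost simple with socle $\PSL_m(q)$, or it falls into six explicit small cases (i)--(vi), each of which is killed by a $p$-part valency bound against $(G,3)$-arc-transitivity. In the generic case one then shows the composition-factor multiplicity of $\PSL_m(q)$ in $X_{uv}\cap M$ is exactly $k$ (again by a prime-part argument using a primitive prime divisor), giving $X_{uv}\geqslant M'$. The transfer to $X_{vw}$ is \emph{not} by symmetry of hypotheses but via the isomorphism $X_{uv}\cong X_{vw}$: one locates a normal subgroup $N\cong\SL_m(q)^k$ of $X_{vw}$, shows $N\leqslant M$ (since $|\PSL_m(q)|^k$ cannot divide $k!$), hence $N=M'$, and only \emph{then} concludes that $\overline{X_{vw}}$ is transitive. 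Lemma~\ref{lem4} (not Lemma~\ref{lem9}) then gives $\A_k$ in both images, and the contradiction runs through $X_v^{(\infty)}$. Your proposal is missing both the appeal to \cite{LPS2000} (the vague ``should force $\varphi_1(X_{uv}\cap M)$ to be large'' is doing the work of a genuine classification theorem) and the asymmetric transfer step that makes the argument go through without assuming both images transitive.
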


\begin{proof}
Suppose that $G_v$ is a $\calC_2$-subgroup of $G$ while $\Gamma$ is $(G,3)$-arc-transitive. Then Lemma~\ref{lem8} shows that $m\geqslant2$. Moreover, if $m=2$ then $q\geqslant4$ by Lemma~\ref{lem14}. Let $M$ be the subgroup of $X_v$ stabilizing each of $W_1,\dots,W_k$ and let $\varphi_i$ be the action of $M$ on $W_i$ for $1\leqslant i\leqslant k$. Then $M$ is normal in $X_v$, $X_v/M=\overline{X_v}=\Sy_k$, $X_{uv}M/M=\overline{X_{uv}}$ and $X_{vw}M/M=\overline{X_{vw}}$. From the factorization $X_v=X_{uv}X_{vw}$ we deduce that $\overline{X_v}=\overline{X_{uv}}\,\overline{X_{vw}}$. Then by Lemma~\ref{lem1}, at least one of $\overline{X_{uv}}$ or $\overline{X_{vw}}$, say $\overline{X_{uv}}$, is a transitive subgroup of $\Sy_k$. Note that
\begin{equation}\label{eq4}
X_v=(\SL_m(q)^k\rtimes\C_{q-1}^{k-1})\rtimes\Sy_k,
\end{equation}
and so it follows from Lemma~\ref{lem3} that
\[
\varphi_1(X_{uv}\cap M)\cong\cdots\cong\varphi_k(X_{uv}\cap M)
\]
and $\pi(\SL_m(q))\subseteq\pi(\varphi_1(X_{uv}\cap M))$. Let $Z$ be the center of $\varphi_1(M)=\GL_m(q)$. Then $Z=\C_{q-1}$ and $\varphi_1(M)/Z=\PGL_m(q)$, which implies that
\[
\pi(\varphi_1(X_{uv}\cap M)Z/Z)\supseteq\pi(\varphi_1(X_{uv}\cap M))\setminus\pi(Z)\supseteq\pi(\PSL_m(q))\setminus\pi(q-1).
\]
Thereby we deduce from~\cite[Theorem~4]{LPS2000} that either $\varphi_1(X_{uv}\cap M)Z/Z$ is almost simple with socle $\PSL_m(q)$, or one of the following holds.
\begin{itemize}
\item[(i)] $m=2$, $q=9$, and $\varphi_1(X_{uv}\cap M)Z/Z=\A_5$.
\item[(ii)] $m=2$, $q\geqslant7$ is a Mersenne prime, and $\varphi_1(X_{uv}\cap M)Z/Z\leqslant\C_q\rtimes\C_{q-1}$.
\item[(iii)] $m=2$, $q\geqslant4$ is even, and $\varphi_1(X_{uv}\cap M)Z/Z\leqslant\D_{2(q+1)}$.
\item[(iv)] $m=3$, $q=3$, and $\varphi_1(X_{uv}\cap M)Z/Z=\C_{13}\rtimes\C_3$.
\item[(v)] $m=4$, $q=2$, and $\varphi_1(X_{uv}\cap M)=\A_7$.
\item[(vi)] $m=6$, $q=2$, and $\varphi_1(X_{uv}\cap M)$ stabilizes a $1$-dimensional or $5$-dimensional subspace of $W_1$.
\end{itemize}

Assume that~(i) occurs. Then $|\varphi_1(X_{uv}\cap M)Z/Z|_3=3$, and so $|\varphi_1(X_{uv}\cap M)|_3=3$ as $|Z|=q-1=8$. Hence $|X_{uv}\cap M|_3\leqslant3^k$, which implies
\[
|X_{uv}|_3\leqslant|X_{uv}\cap M|_3|\Sy_k|_3\leqslant3^k(k!)_3.
\]
From~\eqref{eq4} we see that $|X_v|_3=3^{2k}(k!)_3$. Thus the valency of $\Gamma$ has $3$-part
\[
\frac{|X_v|_3}{|X_{uv}|_3}\geqslant\frac{3^{2k}(k!)_3}{3^k(k!)_3}=3^k.
\]
Since $\Gamma$ is $(G,3)$-arc-transitive, we conclude that $|G_v|$ is divisible by $3^{3k}$. However, as $|\Out(L)|_3=1$, we have $|G_v|_3=|L_v|_3=3^{2k}(k!)_3$. This leads to
\[
3^{3k}\leqslant|G_v|_3=3^{2k}(k!)_3,
\]
that is, $(k!)_3\geqslant3^k$, which is not possible.

Assume that~(ii) occurs. Then $|\varphi_1(X_{uv}\cap M)Z/Z|_2=2$, and so $|\varphi_1(X_{uv}\cap M)|_2=4$ as $|Z|_2=(q-1)_2=2$. Hence $|X_{uv}\cap M|_2\leqslant4^k$, which implies
\[
|X_{uv}|_2\leqslant|X_{uv}\cap M|_2|\Sy_k|_2\leqslant2^{2k}(k!)_2.
\]
From~\eqref{eq4} we see that $|X_v|_2=2^{2k-1}(q+1)^k(k!)_2$. Thus the valency of $\Gamma$ has $2$-part
\[
\frac{|X_v|_2}{|X_{uv}|_2}\geqslant\frac{2^{2k-1}(q+1)^k(k!)_2}{2^{2k}(k!)_2}=\frac{(q+1)^k}{2}.
\]
Since $\Gamma$ is $(G,3)$-arc-transitive, we conclude that $|G_v|$ is divisible by $(q+1)^{3k}/2^3$. However, as $|G_v|_2\leqslant|\Out(L)|_2|L_v|_2=2^{2k}(q+1)^k(k!)_2$, it follows that
\[
(q+1)^{3k}/2^3\leqslant|G_v|_2\leqslant2^{2k}(q+1)^k(k!)_2.
\]
This leads to $(q+1)^{2k}\leqslant2^{2k+3}(k!)_2$ and hence
\[
8^{2k}\leqslant(q+1)^{2k}\leqslant2^{2k+3}(k!)_2<2^{3k+3},
\]
a contradiction.

Assume that~(iii) occurs. Then $|\varphi_1(X_{uv}\cap M)Z/Z|_2=2$ and $|Z|_2=(q-1)_2=1$. Thus, $|\varphi_1(X_{uv}\cap M)|_2=2$ and so $|X_{uv}\cap M|_2\leqslant2^k$, which leads to
\[
|X_{uv}|_2\leqslant|X_{uv}\cap M|_2|\Sy_k|_2\leqslant2^k(k!)_2.
\]
From~\eqref{eq4} we see that $|X_v|_2=q^k(k!)_2$. Consequently, $q^k(k!)_2\leqslant(2^k(k!)_2)^2$ since $|X_v|$ divides $|X_{uv}|^2$. This implies that $q^k\leqslant2^{2k}(k!)_2<2^{3k}$ and hence $q=4$. As $\varphi_1(X_{uv}\cap M)Z/Z\leqslant\D_{10}$ and $|Z|=3$, we then derive that $|X_{uv}\cap M|_3\leqslant3^k$, whence
\[
|X_{uv}|_3\leqslant|X_{uv}\cap M|_3|\Sy_k|_3\leqslant3^k(k!)_3.
\]
By~\eqref{eq4} we have $|X_v|_3=3^{2k-1}(k!)_3$. Thus the valency of $\Gamma$ has $3$-part
\[
\frac{|X_v|_3}{|X_{uv}|_3}\geqslant\frac{3^{2k-1}(k!)_3}{3^k(k!)_3}=3^{k-1}.
\]
Since $\Gamma$ is $(G,3)$-arc-transitive, we conclude that $|G_v|$ is divisible by $3^{3(k-1)}$. This together with $|G_v|_3\leqslant|\Out(L)|_3|L_v|_3=3^{2k-1}(k!)_3<3^{2k-1}\cdot3^{k/2}$ implies
\[
3^{3(k-1)}<3^{2k-1}\cdot3^{k/2},
\]
which forces $k=2$ or $3$. However, for $q=4$ and $k=2$ or $3$, computation in \magma~\cite{magma} shows that there is no nontrivial homogeneous factorization of $X_v$ with the two factors conjugate in $X$, a contradiction.

Assume that~(iv) occurs. In this case, $|\varphi_1(X_{uv}\cap M)|_3=|\varphi_1(X_{uv}\cap M)Z/Z|_3=3$ as $|Z|_3=1$. Hence $|X_{uv}\cap M|_3\leqslant3^k$ and so
\[
|X_{uv}|_3\leqslant|X_{uv}\cap M|_3|\Sy_k|_3\leqslant3^k(k!)_3.
\]
From~\eqref{eq4} we see that $|X_v|_3=3^{3k}(k!)_3$. Then as $(k!)_3<3^k$, it follows that $|X_{uv}|_3^2\leqslant3^{2k}(k!)_3^2<|X_v|_3$. This implies that $|X_v|$ does not divide $|X_{uv}|^2$, a contradiction.

Assume that~(v) occurs. Then $|\varphi_1(X_{uv}\cap M)|_2=2^3$. Hence $|X_{uv}\cap M|_2\leqslant2^{3k}$ and so
\[
|X_{uv}|_2\leqslant|X_{uv}\cap M|_2|\Sy_k|_2\leqslant2^{3k}(k!)_2.
\]
From~\eqref{eq4} we see that $|X_v|_2=2^{6k}(k!)_2$. Thus the valency of $\Gamma$ has $2$-part
\[
\frac{|X_v|_2}{|X_{uv}|_2}\geqslant\frac{2^{6k}(k!)_2}{2^{3k}(k!)_2}=2^{3k}.
\]
Since $\Gamma$ is $(G,3)$-arc-transitive, we conclude that $|G_v|$ is divisible by $2^{9k}$. This together with $|G_v|_2\leqslant|\Out(L)|_2|L_v|_2=2^{6k+1}(k!)_2<2^{7k+1}$ implies $2^{9k}\leqslant|G_v|_2<2^{7k+1}$, which is not possible.

Next assume~(vi). In this case, $|\varphi_1(X_{uv}\cap M)|_7=|\varphi_1(X_{uv}\cap M)Z/Z|_7=7$ as $Z=1$. Hence $|X_{uv}\cap M|_7\leqslant7^k$ and so
\[
|X_{uv}|_7\leqslant|X_{uv}\cap M|_7|\Sy_k|_7\leqslant7^k(k!)_7.
\]
From~\eqref{eq4} we see that $|X_v|_7=7^{2k}(k!)_7$. Thus the valency of $\Gamma$ has $7$-part
\[
\frac{|X_v|_7}{|X_{uv}|_7}\geqslant\frac{7^{2k}(k!)_7}{7^k(k!)_7}=7^k.
\]
Since $\Gamma$ is $(G,3)$-arc-transitive, we conclude that $|G_v|$ is divisible by $7^{3k}$. However, as $|\Out(L)|_7=1$, we have $|G_v|_7=|L_v|_7=7^{2k}(k!)_7$. It follows that $7^{2k}(k!)_7\geqslant|G_v|_7\geqslant7^{3k}$, that is, $(k!)_7\geqslant7^k$, which is not possible.

Thus far we have seen that none of cases~(i)--(vi) is possible. As a consequence, $\varphi_1(X_{uv}\cap M)Z/Z$ is almost simple with socle $\PSL_m(q)$. Then since $\overline{X_{uv}}$ is transitive, it follows that $X_{uv}\cap M$ has a unique insoluble composition factor $\PSL_m(q)$ with multiplicity $\ell$ dividing $k$. We prove $\ell=k$ in the next paragraph.

Suppose to the contrary that $\ell<k$. Write $q=p^f$ with $p$ prime. First assume $(m,q)\neq(2,8)$. Then there exists an odd prime $r$ in $\pi(\PSL_m(q))\setminus\pi(q-1)$ such that $r>f$. It follows that $|\varphi_1(X_{uv}\cap M)|_r=|\varphi_1(X_{uv}\cap M)Z/Z|_r=|\PSL_m(q)|_r$ and $|\Out(L)|_r=1$. Since $\ell<k$, we deduce $|X_{uv}\cap M|_r\leqslant|\PSL_m(q)|_r^{k/2}$, so the valency of $\Gamma$ has $r$-part
\[
\frac{|X_v|_r}{|X_{uv}|_r}\geqslant\frac{|X_v|_r}{|X_{uv}\cap M|_r|\Sy_k|_r}\geqslant\frac{|\PSL_m(q)|^k(k!)_r}{|\PSL_m(q)|_r^{k/2}(k!)_r}=|\PSL_m(q)|_r^{k/2}.
\]
This implies $|G_v|_r\geqslant|\PSL_m(q)|_r^{3k/2}$ as $\Gamma$ is $(G,3)$-arc-transitive. However,
\[
|G_v|_r\leqslant|\Out(L)|_r|L_v|_r=|\PSL_m(q)|_r^k(k!)_r<|\PSL_m(q)|_r^kr^{k/(r-1)}\leqslant|\PSL_m(q)|_r^kr^{k/2}.
\]
We conclude that $|\PSL_m(q)|_r^{3k/2}<|\PSL_m(q)|_r^kr^{k/2}$ and hence
\[
r^{k/2}\leqslant|\PSL_m(q)|_r^{k/2}<r^{k/2},
\]
a contradiction. Next assume $(m,q)=(2,8)$. Then
\[
|\varphi_1(X_{uv}\cap M)|_3=|\varphi_1(X_{uv}\cap M)Z/Z|_3\leqslant|\PGL_2(8)|_3=9
 \]
and $|\Out(L)|_3=3$. Since $\ell<k$, we deduce $|X_{uv}\cap M|_3\leqslant9^{k/2}=3^k$, so the valency of $\Gamma$ has $3$-part
\[
\frac{|X_v|_3}{|X_{uv}|_3}\geqslant\frac{|X_v|_3}{|X_{uv}\cap M|_3|\Sy_k|_3}\geqslant\frac{9^k(k!)_3}{3^k(k!)_3}=3^k.
\]
This together with the $(G,3)$-arc-transitivity of $\Gamma$ implies $|G_v|_3\geqslant3^{3k}$, whence
\[
3^{3k}\leqslant|G_v|_3\leqslant|\Out(L)|_3|L_v|_3=3\cdot9^k(k!)_3<3\cdot3^{2k}\cdot3^{k/2},
\]
again a contradiction.

Now we have $\ell=k$. Accordingly, $X_{uv}\cap M\geqslant M'\cong\SL_m(q)^k$ and hence $M'$ is a normal subgroup of $X_{uv}$. Moreover,
\[
M'\Z(X_{uv})/\Z(X_{uv})\cong M'/(M'\cap\Z(X_{uv}))=M'/\Z(M')\cong\PSL_m(q)^k
\]
is a minimal normal subgroup of $X_{uv}/\Z(X_{uv})$ since $\overline{X_{uv}}$ is transitive. As $X_{vw}\cong X_{uv}$, we conclude that $X_{vw}$ has a normal subgroup $N$ isomorphic to $\SL_m(q)^k$ such that $N\Z(X_{uv})/\Z(X_{uv})\cong\PSL_m(q)^k$ is a minimal normal subgroup of $X_{vw}/\Z(X_{vw})$. Since $N\cap M$ is normal in $X_{vw}$, $(N\cap M)\Z(X_{vw})/\Z(X_{vw})$ is normal in $X_{vw}/\Z(X_{vw})$. Thus, $(N\cap M)\Z(X_{vw})/\Z(X_{vw})=1$ or $N\Z(X_{uv})/\Z(X_{uv})$ as $(N\cap M)\Z(X_{vw})/\Z(X_{vw})$ is a subgroup of $N\Z(X_{vw})/\Z(X_{vw})$. If $(N\cap M)\Z(X_{vw})/\Z(X_{vw})=1$, that is, $N\cap M\leqslant\Z(X_{vw})$, then the insoluble composition factors of $N$ coincide with all those of $N/(N\cap M)$ and so $|N/(N\cap M)|$ is divisible by $|\PSL_m(q)|^k$. Since $N/(N\cap M)\cong NM/M\leqslant\Sy_k$, this would imply that $k!$ is divisible by $|\PSL_m(q)|^k$, which is not possible. Therefore, $(N\cap M)\Z(X_{vw})/\Z(X_{vw})=N\Z(X_{uv})/\Z(X_{uv})$. It follows that
\[
N\cap M\geqslant(N\cap M)'=((N\cap M)Z(X_{vw}))'=(NZ(X_{vw}))'=N'=N,
\]
and so $N\leqslant M$. This implies $N=N'\leqslant M'$, which leads to $N=M'$ due to $|N|=|M'|$. In particular, $M'\Z(X_{uv})/\Z(X_{uv})$ is a minimal normal subgroup of $X_{vw}/\Z(X_{vw})$, and so $\overline{X_{vw}}$ is transitive. Since $X_{uv}\cong X_{vw}$ and both $X_{uv}$ and $X_{vw}$ contain $M'$, we see that $\overline{X_{uv}}$ and $\overline{X_{vw}}$ have the same insoluble composition factors. Thus we derive from Lemma~\ref{lem4} that both $\overline{X_{uv}}$ and $\overline{X_{uv}}$ contain $\A_k$. Consequently, $X_{uv}^{(\infty)}=X_{vw}^{(\infty)}=X_v^{(\infty)}$, and so
\[
(X_v^{(\infty)})^g=(X_{uv}^{(\infty)})^g=(X_{uv}^g)^{(\infty)}=X_{vw}^{(\infty)}=X_v^{(\infty)}.
\]
This in conjunction with the fact that $X_v^{(\infty)}$ is normal in $X_v$ implies that $X_v^{(\infty)}$ is normal in $\langle X_v,g\rangle=X$, a contradiction.
\end{proof}

We conclude this section with the following:

\begin{theorem}\label{thm2}
Let $\Gamma$ be a $G$-vertex-primitive $(G,s)$-arc-transitive digraph such that $G$ is almost simple with socle $\PSL_n(q)$ and $G_v$ is a maximal subgroup of $G$ from classes $\calC_1$ and $\calC_2$, where $v$ is a vertex of $\Gamma$. Then $s\leqslant2$. Moreover, if $G_v$ is from class $\calC_1$, then $G\nleqslant\PGaL_n(q)$ and $G_v$ does not stabilize a nontrivial proper subspace of $\bbF_q^n$.
\end{theorem}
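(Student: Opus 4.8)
The plan is to obtain Theorem~\ref{thm2} by assembling Lemma~\ref{lem5} with Lemmas~\ref{lem16} and~\ref{lem18}, which between them already contain all the substantive work; what remains is to check that their hypotheses are in force. First I would record that $\Gamma$ has valency at least $3$. We may assume $s\geqslant1$, so that $\Gamma$ is $G$-arc-transitive; since $G$-vertex-primitivity makes $\Gamma$ $\Aut(\Gamma)$-vertex-primitive (and connected, as it has an arc), $\Gamma$ is a vertex-primitive arc-transitive digraph, so Lemma~\ref{lem5} says that either $\Gamma$ has valency at least $3$ or $\Gamma$ is a directed cycle of prime length $r$. In the latter case $\Aut(\Gamma)=\C_r$, which contains no almost simple group, so that case is impossible.

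Now suppose for a contradiction that $s\geqslant3$. A $G$-vertex-transitive $(G,t+1)$-arc-transitive digraph is $(G,t)$-arc-transitive, so $\Gamma$ is $(G,3)$-arc-transitive and hence also $(G,2)$-arc-transitive. Write $q=p^f$ with $p$ prime and $L=\PSL_n(q)$; as $L$ is a nontrivial normal subgroup of the primitive group $G$ it is transitive on the vertices, so we may choose an arc $u\rightarrow v$, an element $\overline g\in L$ with $u^{\overline g}=v$, and set $w=v^{\overline g}$, giving a $2$-arc $u\rightarrow v\rightarrow w$. With $X=\SL_n(q)$ acting on $V=\bbF_q^n$, the projection $\varphi\colon X\to L$, and a preimage $g\in X$ of $\overline g$, we are now in the situation of Hypothesis~\ref{hyp1}. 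If $G_v$ is a $\calC_1$-subgroup, Lemma~\ref{lem16} asserts that $\Gamma$ is not $(G,3)$-arc-transitive; if $G_v$ is a $\calC_2$-subgroup, the same follows from Lemma~\ref{lem18}. Either way this contradicts $s\geqslant3$, and so $s\leqslant2$.

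For the ``moreover'' clause, suppose $G_v$ is a $\calC_1$-subgroup; here we need only that $\Gamma$ is $G$-arc-transitive, i.e.\ $s\geqslant1$. The opening argument in the proof of Lemma~\ref{lem16}, which uses only arc-transitivity of $\Gamma$ and transitivity of $L$ on the vertices, shows that $X_v$ cannot be the stabilizer of a single nontrivial proper subspace $W$: otherwise some $g_1\in X$ would interchange $W$ with $W^g$ and hence $v$ with $w$, contradicting antisymmetry of $\Gamma$. Therefore $X_v$ is of one of the ``dual'' $\calC_1$-types (a decomposition $V=U\oplus W$ with $\dim U<n/2$, or a chain $U<W$ with $\dim U+\dim W=n$), each of which forces $G\nleqslant\PGaL_n(q)$; and then $G_v$ induces a duality on the projective space, so it fixes no subspace of $V$ of dimension strictly between $0$ and $n$.

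Since Lemmas~\ref{lem5},~\ref{lem16} and~\ref{lem18} are available, no step is genuinely hard — the difficulty all sits in those lemmas and the preceding $\calC_2$-analysis — and what is left is bookkeeping. The points needing care are: confirming that Hypothesis~\ref{hyp1} really applies (chiefly the valency bound, which rests on $G$ being almost simple, together with the vertex-transitivity of $L$); keeping track of which degrees of $s$-arc-transitivity are in hand so that Lemmas~\ref{lem16} and~\ref{lem18} may be invoked legitimately; and, for the last clause, using that a $\calC_1$-subgroup of an almost simple group with socle $\PSL_n(q)$ not contained in $\PGaL_n(q)$ contains a graph automorphism and hence stabilizes no proper nonzero subspace of $V$.
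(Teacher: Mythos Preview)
Your proposal is correct and follows essentially the same route as the paper: invoke Lemma~\ref{lem5} to secure valency at least $3$, verify Hypothesis~\ref{hyp1}, and then read off the conclusions from Lemmas~\ref{lem16} and~\ref{lem18}. Your extra care in justifying the valency bound (ruling out the directed prime cycle via $\Aut(\Gamma)\cong\C_r$), in checking $L$-transitivity, and in observing that the ``moreover'' clause needs only $s\geqslant1$ (since case~(i) of Lemma~\ref{lem16} uses nothing beyond arc-transitivity and the existence of $\overline g\in L$) is sound and slightly more explicit than the paper's two-line proof, but the substance is identical.
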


\begin{proof}
From Lemma~\ref{lem5} we see that $\Gamma$ has valency at least $3$. Hence Hypothesis~$\ref{hyp1}$ holds. Then the theorem follows from Lemmas~\ref{lem16} and~\ref{lem18}.
\end{proof}

\section{$\calC_3$, $\calC_4$, $\calC_5$ and $\calC_6$-subgroups}\label{sec4}

We recall Hypothesis~\ref{hyp1} and observe that $G_v=G_{uv}G_{vw}$ with $G_{uv}\cong G_{vw}$ and so $\pi(G_v)=\pi(G_{uv})=\pi(G_{vw})$.

\begin{lemma}\label{lem23}
If Hypothesis~$\ref{hyp1}$ holds then $G_v$ is not a $\calC_3$-subgroup of $G$.
\end{lemma}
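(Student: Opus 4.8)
The plan is to use the observation recorded just before the statement: Hypothesis~\ref{hyp1} gives a homogeneous factorisation $G_v=G_{uv}G_{vw}$ with $G_{uv}^{\overline g}=G_{vw}$, and since $G_v$ is maximal in $G$ and $\overline g\notin G_v$ we have $\langle G_v,\overline g\rangle=G$, so Lemma~\ref{lem26} forbids $\overline g$ from normalising any nontrivial normal subgroup of $G_v$; I would derive a contradiction by exhibiting such a subgroup. Recall the structure of a $\calC_3$-subgroup: $n=ab$ with $b$ prime, and $G_v$ stabilises an $\bbF_{q^b}$-structure on $V$; put $L_0=\Rad(G_v)$. If $a\geqslant2$ then $G_v$ has a normal quasisimple subgroup $N$ with $N/\Z(N)\cong\PSL_a(q^b)$, with $N\cap L_0=\Z(N)$ and $G_v/NL_0$ soluble, so that $G_v^{(\infty)}=N$ and $\overline G:=G_v/L_0$ is almost simple with socle $S\cong\PSL_a(q^b)$. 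If $a=1$ (so $b=n$ is prime) then $G_v$ is soluble with a normal cyclic ``Singer'' subgroup $C$ of order $(q^n-1)/((q-1)\gcd(n,q-1))$. In either case I would fix a primitive prime divisor $r\in\ppd(p,nf)$ (postponing the finitely many $(n,q)$ for which no such $r$ exists); since $r>nf$ it is coprime to $b$, to $\gcd(n,q-1)$ and to $|\Out(L)|$.

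For $a\geqslant2$: here $r$ is coprime to $|L_0|$ and to $|G_v/NL_0|$ while $r\mid|N|$, and since $|G_v|$ divides $|G_{uv}|^2$ (as $G_{uv}\cong G_{vw}$) we get $r\in\pi(G_{uv})\cap\pi(G_{vw})$, so passing to $\overline G$ gives a factorisation $\overline G=\overline{G_{uv}}\,\overline{G_{vw}}$ with $r\in\pi(\overline{G_{uv}})\cap\pi(\overline{G_{vw}})$. Because $\ppd(p,abf)=\ppd(p,nf)$, Lemma~\ref{lem20} applied to $\overline G$ (of socle $\PSL_a(q^b)$) forces one of $\overline{G_{uv}}$, $\overline{G_{vw}}$, say $\overline{G_{uv}}$, to contain $S$, unless $(a,q^b)=(2,9)$ (the exception $(6,2)$ would need $b=1$). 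Then $(\overline{G_{uv}})^{(\infty)}=S$, hence $G_{uv}^{(\infty)}L_0=NL_0$; since every perfect subgroup of $G_v$ lies in $G_v^{(\infty)}=N$ and $N$ is quasisimple, this forces $G_{uv}^{(\infty)}=N$. Consequently $N=G_{uv}^{(\infty)}$ is characteristic in $G_{uv}$, so $N^{\overline g}=(G_{uv}^{\overline g})^{(\infty)}=G_{vw}^{(\infty)}$; but $G_{vw}^{(\infty)}\cong G_{uv}^{(\infty)}=N$ is a perfect subgroup of $G_v$, hence lies in $G_v^{(\infty)}=N$ and, having the same order, equals $N$. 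Thus $\overline g$ normalises the nontrivial normal subgroup $N$ of $G_v$, contradicting Lemma~\ref{lem26}.

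For $a=1$: here $r$ divides $(q^n-1)/(q-1)$ and is coprime to $(q-1)n$, so the Sylow $r$-subgroup $R$ of $G_v$ lies in $C$, is characteristic in $C$ and hence normal in $G_v$, and is cyclic; in particular $R$ is the unique Sylow $r$-subgroup of $G_v$. As $r\in\pi(G_v)=\pi(G_{uv})=\pi(G_{vw})$, the subgroups $G_{uv}\cap R$ and $G_{vw}\cap R$ are the Sylow $r$-subgroups of $G_{uv}$ and $G_{vw}$; being subgroups of the cyclic group $R$ of equal order they coincide in a nontrivial subgroup $R_1$, which is characteristic in each of $G_{uv}$, $G_{vw}$ (their unique normal Sylow $r$-subgroup) and normal in $G_v$. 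Then $R_1^{\overline g}$ is the unique normal Sylow $r$-subgroup of $G_{vw}$, so $R_1^{\overline g}=R_1$, again contradicting Lemma~\ref{lem26}.

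Finally I would dispose of the excluded pairs: $(n,q)=(4,3)$ (where $(a,q^b)=(2,9)$) and $(n,q)=(6,2)$ in the branch $a\geqslant2$, and the Zsigmondy exceptions $(n,q)=(2,8)$ and $n=2$ with $q$ a Mersenne prime in the branch $a=1$. If $n=2$ and $q$ is a Mersenne prime then $G_v$ is a dihedral $2$-group $\D_{2^j}$ with $j\geqslant4$ (for $q=7$ the relevant subgroup $\D_8$ of $\PSL_2(7)$ is not maximal, and $\PSL_2(3)$ is not simple), and a homogeneous factorisation of $\D_{2^j}$ forces both factors to be the two dihedral subgroups of index $2$, which meet in the characteristic cyclic subgroup of order $2^{j-2}$, so the argument of the case $a=1$ applies. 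The remaining groups---$G_v=\D_{18}$ in $\PSL_2(8)$, $G_v=\C_9\rtimes\C_6$ in $\PGaL_2(8)$, and the $\calC_3$-subgroups in $\SL_4(3)$ and $\SL_6(2)$---are small enough that one verifies directly, or with \magma, that $G_v$ admits no homogeneous factorisation $G_v=G_{uv}G_{vw}$ with $G_{uv}^{\overline g}=G_{vw}$ compatible with Lemma~\ref{lem26}. The hard part is the branch $a\geqslant2$: everything hinges on forcing one factor of the almost simple quotient $\overline G$ to contain the socle $S$, which is precisely where Lemma~\ref{lem20}, and hence the classification of maximal factorisations of almost simple groups, is indispensable, and where the low-dimensional exceptions must be singled out and handled by computation.
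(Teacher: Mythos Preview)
Your argument is correct and follows the same overall architecture as the paper's proof: split into the Singer case $a=1$ and the case $a\geqslant2$, in the latter pass to the almost simple quotient $G_v/\Rad(G_v)$ and use the classification of factorisations to force both factors to contain the socle, then conclude via Lemma~\ref{lem26}. There are two tactical differences worth noting.

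First, in the branch $a\geqslant2$ you invoke Lemma~\ref{lem20} (the single primitive-prime-divisor criterion) rather than Lemma~\ref{lem19} (the full $\pi$-set criterion) that the paper uses. Since Lemma~\ref{lem2} already gives $\pi(G_{uv})=\pi(G_{vw})=\pi(G_v)$, the paper can feed the whole prime set into Lemma~\ref{lem19}; your route via a single $r\in\ppd(p,nf)$ works too, but you then need $r\nmid|\Rad(G_v)|$ to lift $r\in\pi(G_{uv})$ to $r\in\pi(\overline{G_{uv}})$, and this is exactly what fails for $(n,q)=(6,2)$ with $b=3$ (where the conventional $r=7$ divides $q^b-1=7$). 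The paper's Lemma~\ref{lem19} produces the exceptions $(a,q^b)\in\{(2,8),(2,9)\}$, yours produces $(a,q^b)=(2,9)$ plus the $\ppd$-degeneracy at $(n,q)=(6,2)$; the residual small cases end up the same and are dispatched by \magma{} in both proofs.

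Second, after obtaining $\overline{G_{uv}}\geqslant S$ you argue directly that $G_{uv}^{(\infty)}=N=G_v^{(\infty)}$ using quasisimplicity of $N$, and then transport this to $G_{vw}$ via the isomorphism $G_{uv}\cong G_{vw}$ and the observation that every perfect subgroup of $G_v$ lies in $N$. The paper instead argues that $\overline{G_{uv}}$ and $\overline{G_{vw}}$ share insoluble composition factors (since $\Rad(G_v)$ is soluble and $G_{uv}\cong G_{vw}$), whence $\overline{G_{vw}}\geqslant S$ as well; both routes arrive at $G_{uv}^{(\infty)}=G_{vw}^{(\infty)}=G_v^{(\infty)}$. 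Your version is a little more explicit about why containing the socle upstairs forces containing $G_v^{(\infty)}$ downstairs.
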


\begin{proof}
Suppose that Hypothesis~\ref{hyp1} holds and $G_v$ is a $\calC_3$-subgroup of $G$. Then by~\cite[Proposition~4.3.6]{KL1990}, either $n$ is prime, or $G_v/\Rad(G_v)$ is almost simple with socle $\PSL_{n/r}(q^r)$ for some prime divisor $r$ of $n$.

First assume that $n$ is a prime with $\ppd(p,nf)\neq\emptyset$. If $(n,q)=(2,8)$ then $G_v=\D_{18}$ or $\C_9\rtimes\C_6$, where $G=\PSL_2(8)$ or $\PGaL_2(8)$ respectively, but a \magma~\cite{magma} calculation shows that $G_v$ does not have a homogeneous factorization $G_v=G_{uv}G_{vw}$ with $|G_v|/|G_{uv}|\geqslant3$, a contradiction. Therefore, $(n,q)\neq(2,8)$. Then since
\[
\C_{(q^n-1)/(q-1)(q-1,n)}\rtimes\C_n\leqslant G_v\leqslant(\C_{(q^n-1)/(q-1)}\rtimes\C_{nf}).\C_2,
\]
we deduce that for any $m\in\ppd(p,nf)$ there is a unique subgroup $M$ of order $m$ in $G_v$. Since $m\in\pi(G_v)=\pi(G_{uv})=\pi(G_{vw})$, it follows that $M\leqslant G_{uv}$ and $M\leqslant G_{vw}$. Moreover, since $G_{uv}^{\overline{g}}=G_{vw}$ we have $M^{\overline{g}}=M$. However, this contradicts Lemma~\ref{lem26} as $M$ is normal in $G_v$.

Next assume that $n$ is a prime with $\ppd(p,nf)=\emptyset$. Then $q=p$ is a Mersenne prime and $n=2$. In this case $G_v=\D_{q+1}$ or $\D_{2(q+1)}$, where $G=\PSL_2(q)$ or $\PGL_2(q)$, respectively. Let $N$ be the unique cyclic subgroup of index $2$ of $G_v$. Then since $G_v=G_{uv}G_{vw}$, at least one of $G_{uv}$ or $G_{vw}$, say $G_{uv}$, is not contained in $N$. This implies that $G_{vw}$ is not contained in $N$ since $G_{vw}\cong G_{uv}$. Consequently, $G_{uv}\cap N$ and $G_{vw}\cap N$ are the unique cyclic subgroups of index $2$ of $G_{uv}$ and $G_{vw}$, respectively. Thus we conclude that $G_{uv}\cap N$ and $G_{vw}\cap N$ are subgroups of the cyclic group $N$ of the same order, and so $G_{uv}\cap N=G_{vw}\cap N$. Moreover, as $G_{vw}\cap N^{\overline{g}}=(G_{uv}\cap N)^{\overline{g}}\cong G_{uv}\cap N$ is a cyclic subgroup of index $2$ of $G_{vw}$, we deduce that $G_{vw}\cap N^{\overline{g}}=G_{vw}\cap N$ and hence $(G_{uv}\cap N)^{\overline{g}}=G_{vw}\cap N^{\overline{g}}=G_{vw}\cap N=G_{uv}\cap N$. Since $G_{uv}\cap N$ is characteristic in $N$ and hence normal in $G_v$, we have a contradiction to Lemma~\ref{lem26}.

Finally assume that $G_v/\Rad(G_v)$ is almost simple with socle $\PSL_{n/r}(q^r)$ for some prime divisor $r$ of $n$. If $(n/r,q^r)=(2,8)$ then $G_v=\GL_2(8)\rtimes\C_3$ or $\GL_2(8)\rtimes\C_6$, where $G=\PSL_6(2)$ or $\PSL_6(2).\C_2$ respectively, but a \magma~\cite{magma} calculation shows that $G_v$ does not have a homogeneous factorization $G_v=G_{uv}G_{vw}$ with $|G_v|/|G_{uv}|\geqslant3$, a contradiction. If $(n/r,q^r)=(2,9)$ then $\PSL_4(3)\leqslant G\leqslant\PSL_4(3).\C_2^2$ and $(\A_6\times\C_4)\rtimes\C_2\leqslant G_v\leqslant(\A_6\times\C_4).\C_2^3$ (see~\cite{atlas}), but by a \magma~\cite{magma} computation $G_v$ does not have a factorization $G_v=G_{uv}G_{vw}$ with $|G_v|/|G_{uv}|\geqslant3$ such that $G_{uv}$ and $G_{vw}$ are conjugate in $G$, still a contradiction. Therefore, $(n/r,q^r)\neq(2,8)$ or $(2,9)$. Denote $\overline{G_v}=G_v/\Rad(G_v)$, $\overline{G_{uv}}=G_{uv}\Rad(G_v)/\Rad(G_v)$ and $\overline{G_{vw}}=G_{vw}\Rad(G_v)/\Rad(G_v)$. Then since $G_v=G_{uv}G_{vw}$ and $\pi(\Rad(G_v))\subseteq\pi(q^r-1)\cup\pi(2rf)$ we have $\overline{G_v}=\overline{G_{uv}}\,\overline{G_{vw}}$ with $\pi(\overline{G_{uv}})$ and $\pi(\overline{G_{vw}})$ both containing $\pi(\overline{G_v})\setminus(\pi(q^r-1)\cup\pi(2rf))$. Hence we see from Lemma~\ref{lem19} that at least one of $\overline{G_{uv}}$ or $\overline{G_{vw}}$, say $\overline{G_{uv}}$, contains $\Soc(\overline{G_v})=\PSL_{n/r}(q^r)$. Since $G_{uv}\cong G_{vw}$ and $\Rad(G_v)$ is soluble, it follows that $\overline{G_{uv}}$ and $\overline{G_{vw}}$ have the same insoluble composition factors. Thus $\overline{G_{vw}}$ also contains $\Soc(\overline{G_v})=\PSL_{n/r}(q^r)$. Consequently, $G_{uv}$ and $G_{vw}$ both contain $G_v^{(\infty)}$, and so $G_{uv}^{(\infty)}=G_{vw}^{(\infty)}=G_v^{(\infty)}$. It follows that
\[
(G_v^{(\infty)})^{\overline{g}}=(G_{uv}^{(\infty)})^{\overline{g}}=(G_{uv}^{\overline{g}})^{(\infty)}=G_{vw}^{(\infty)}=G_v^{(\infty)},
\]
contradicting Lemma~\ref{lem26}.
\end{proof}

\begin{lemma}
If Hypothesis~$\ref{hyp1}$ holds then $G_v$ is not a $\calC_4$-subgroup of $G$.
\end{lemma}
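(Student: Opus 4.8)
The plan is to combine the detailed structure of a $\calC_4$-subgroup with a primitive prime divisor argument, in the spirit of Lemma~\ref{lem23} and of the final paragraph of the proof of Lemma~\ref{lem18}. Recall that under Hypothesis~\ref{hyp1} we have $G_v=G_{uv}G_{vw}$ with $G_{uv}\cong G_{vw}$ and $G_{uv}^{\overline{g}}=G_{vw}$, so that $\pi(G_{uv})=\pi(G_{vw})=\pi(G_v)$.

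By \cite[Section~4.4]{KL1990}, if $G_v$ is a $\calC_4$-subgroup then $X_v$ normalizes a tensor decomposition $V=V_1\otimes V_2$ with $\dim(V_1)=n_1$, $\dim(V_2)=n_2$, $n=n_1n_2$ and $2\leqslant n_1<n_2$, and $X_v$ has normal subgroups $N_1\cong\SL_{n_1}(q)$ and $N_2\cong\SL_{n_2}(q)$ acting on the first and second tensor factor respectively, with $X_v/(N_1N_2)$ cyclic. Since $n_1\neq n_2$, the two tensor factors cannot be interchanged, so $N_1$ and $N_2$ are characteristic in $X_v$; hence their images $\overline{N_1}$ and $\overline{N_2}$ in $L_v$ are characteristic in $L_v$ and normal in $G_v$. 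Here $\overline{N_2}\cong\PSL_{n_2}(q)$ is nonabelian simple as $n_2\geqslant3$, and $G_v^{(\infty)}=L_v^{(\infty)}=\overline{N_1}\overline{N_2}$, which equals the direct product $\PSL_{n_1}(q)\times\PSL_{n_2}(q)$ when $(n_1,q)\notin\{(2,2),(2,3)\}$ and equals $\overline{N_2}$ otherwise (with $\overline{N_1}\leqslant\Rad(L_v)$ in the latter case).

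Next I would choose $r\in\ppd(p,n_2f)$. Then $r>n_2f\geqslant2f$, so $r\nmid q-1$ and $r\nmid|\Out(L)|$; also $r\nmid|\overline{N_1}|$, since every prime divisor of $|\PSL_{n_1}(q)|$ divides $p\prod_{i=1}^{n_1}(p^{if}-1)$ and $n_1f<n_2f$; and $r\nmid|\Rad(L_v)|$ and $r\nmid|X_v/(N_1N_2)|$, the latter dividing $q-1$. Thus $\overline{N_2}\cong\PSL_{n_2}(q)$ is the only insoluble composition factor of $G_v$ whose order is divisible by $r$. Since $r\in\pi(G_v)=\pi(G_{uv})=\pi(G_{vw})$ and the insoluble composition factors of $G_{uv}$ and of $G_{vw}$ lie among those of $G_v$, it follows that $\PSL_{n_2}(q)$ is a composition factor of both $G_{uv}$ and $G_{vw}$. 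Now $G_{uv}^{(\infty)}$ is a perfect subgroup of $G_v^{(\infty)}$, which is either $\overline{N_2}$ or the direct product $\overline{N_1}\times\overline{N_2}$ of two non-isomorphic nonabelian simple groups; a perfect subgroup of such a group having $\PSL_{n_2}(q)$ as a composition factor must contain $\overline{N_2}$. Hence $\overline{N_2}\leqslant G_{uv}^{(\infty)}$, and likewise $\overline{N_2}\leqslant G_{vw}^{(\infty)}$; moreover $\overline{N_2}$ is the unique normal subgroup of $G_{vw}^{(\infty)}$ isomorphic to $\PSL_{n_2}(q)$. Since $G_{uv}^{(\infty)}$ is characteristic in $G_{uv}$ and $\overline{N_2}$ is characteristic in $G_{uv}^{(\infty)}$, the subgroup $\overline{N_2}^{\overline{g}}$ is a normal subgroup of $G_{vw}^{(\infty)}=(G_{uv}^{(\infty)})^{\overline{g}}$ isomorphic to $\PSL_{n_2}(q)$, so $\overline{N_2}^{\overline{g}}=\overline{N_2}$. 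As $\overline{N_2}$ is a nontrivial normal subgroup of $G_v$, this contradicts Lemma~\ref{lem26}, which would complete the proof.

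The delicate point is securing a usable prime $r$. A genuine element of $\ppd(p,n_2f)$ exists unless $(p,n_2f)=(2,6)$, which leaves only $L\in\{\PSL_6(4),\PSL_{12}(2),\PSL_{18}(2),\PSL_{24}(2),\PSL_{30}(2)\}$. For the first four of these the argument above still works with $r=7$, $r=5$ or $r=31$ (for $q=2$ the soluble composition factors have $2$-power order, so only the condition $r\nmid|\overline{N_1}|$ needs checking). The remaining case $L=\PSL_{30}(2)$, where $G_v^{(\infty)}\cong\PSL_5(2)\times\PSL_6(2)$ and every prime dividing $|\PSL_6(2)|$ also divides $|\PSL_5(2)|$, is the main obstacle; there I would instead project the homogeneous factorization $G_v=G_{uv}G_{vw}$ onto the two simple direct factors of $G_v^{(\infty)}$ and appeal to the classification of factorizations of almost simple groups in \cite{LPS1990}, the bound $|G_v|\mid|G_{uv}|^2$ being too weak on its own. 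Finally, one should confirm from \cite[Table~3.5.A]{KL1990} that the $\calC_4$-subgroup is actually maximal in each surviving case, since otherwise it does not arise as a vertex stabilizer.
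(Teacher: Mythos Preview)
Your overall strategy---locate $\overline{N_2}\cong\PSL_{n_2}(q)$ inside both $G_{uv}$ and $G_{vw}$ and then invoke Lemma~\ref{lem26}---is the same as the paper's, but the key step is not justified. From $r\in\pi(G_{uv})$ you conclude that $\PSL_{n_2}(q)$ is a composition factor of $G_{uv}$, citing that ``the insoluble composition factors of $G_{uv}$ lie among those of $G_v$''. This is false in general: a subgroup can have insoluble composition factors that are not composition factors of the ambient group (for instance $\PSL_3(2)\leqslant\Sy_7$). Knowing only that an element of order $r$ lies in $G_{uv}\cap\overline{N_2}$ does not force $\overline{N_2}$ to be a composition factor of $G_{uv}$, nor does it force $r$ to divide $|G_{uv}^{(\infty)}|$; so the perfect-subgroup argument you set up next cannot be applied.

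The paper closes this gap by passing to the almost simple quotient. Let $C=\Cen_{G_v}(\overline{N_2})$; then $G_v/C$ has socle $\PSL_{n_2}(q)$, and since $r\nmid|C|$ one has $r\in\pi(G_{uv}C/C)\cap\pi(G_{vw}C/C)$. Now Lemma~\ref{lem20} (a consequence of the Liebeck--Praeger--Saxl classification) applies to the factorization $G_v/C=(G_{uv}C/C)(G_{vw}C/C)$ and yields that at least one image contains the socle; the isomorphism $G_{uv}\cong G_{vw}$ then transfers $\PSL_{n_2}(q)$ as a genuine composition factor to both, after which your perfect-subgroup argument does go through. The same quotient trick also absorbs all of the $(p,n_2f)=(2,6)$ exceptions into the single case $n_2=6$, $q=2$, which the paper finishes with a \textsc{Magma}-aided analysis rather than the sketch you give for $\PSL_{30}(2)$.
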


\begin{proof}
Suppose that Hypothesis~\ref{hyp1} holds and $G_v$ is a $\calC_4$-subgroup of $G$. Then by~\cite[Proposition~4.4.10]{KL1990}, $G_v^{(\infty)}=\PSL_m(q)^{(\infty)}\times\PSL_k(q)$ with $1<m<k$ such that $n=mk$ and $\pi(G_v/G_v^{(\infty)})\subseteq\pi(\PSL_m(q))\cup\pi(f)$. Note that $G_v^{(\infty)}=\PSL_m(q)^{(\infty)}\times\PSL_k(q)$ has a unique normal subgroup $K\cong\PSL_k(q)$. We see that $K$ is characteristic in $G_v^{(\infty)}$ and thus normal in $G_v$. Let $C$ be the centralizer of $K$ in $G_v$. Then $C\lhd G_v$ and $G_v/C$ is an almost simple group with socle $\PSL_k(q)$. Denote $\overline{G_v}=G_v/C$, $\overline{G_{uv}}=G_{uv}C/C$ and $\overline{G_{vw}}=G_{vw}C/C$. Note that $C\cap G_v^{(\infty)}=\PSL_m(q)^{(\infty)}$ and so $\pi(C)\subseteq\pi(\PSL_m(q)^{(\infty)})\cup\pi(G_v/G_v^{(\infty)})$. We have
\begin{align*}
\pi(G_v)\setminus\pi(C)&\supseteq\pi(\PSL_k(q))\setminus(\pi(\PSL_m(q))\cup\pi(G_v/G_v^{(\infty)}))\\
&\supseteq\pi(q^k-1)\setminus(\pi(\PSL_m(q))\cup\pi(f)).
\end{align*}
Thus we deduce from $G_v=G_{uv}G_{vw}$ and $\pi(G_{uv})=\pi(G_{vw})=\pi(G_v)$ that $\overline{G_v}=\overline{G_{uv}}\,\overline{G_{vw}}$ with $\pi(\overline{G_{uv}})$ and $\pi(\overline{G_{vw}})$ both containing $\pi(q^k-1)\setminus(\pi(\PSL_m(q))\cup\pi(f))$. Then by Lemma~\ref{lem20}, one of the following holds:
\begin{itemize}
\item[(i)] at least one of $\overline{G_{uv}}$ or $\overline{G_{vw}}$ contains $\Soc(\overline{G_v})$;
\item[(ii)] $k=6$, $q=2$, and neither $\overline{G_{uv}}$ nor $\overline{G_{vw}}$ contains $\Soc(\overline{G_v})$.
\end{itemize}

First assume that~(i) occurs. Without loss of generality, assume that $\overline{G_{uv}}$ contains $\Soc(\overline{G_v})$. Then since $G_{uv}$ and $G_{vw}$ have the same composition factors, we see that $\overline{G_{vw}}$ also contains $\Soc(\overline{G_v})=\PSL_k(q)$. Consequently, $G_{uv}$ and $G_{vw}$ both contain $K$, and hence $K$ is the unique normal subgroup isomorphic to $\PSL_k(q)$ of $G_{uv}$ and $G_{vw}$, respectively. Now $K^{\overline{g}}$ is normal in $G_{uv}^{\overline{g}}=G_{vw}$ and so $K^{\overline{g}}=K$. This contradicts Lemma~\ref{lem26}.

Next assume that~(ii) occurs. Here by~\cite[Proposition~4.4.10]{KL1990}, $G_v=M\times K$ or $(M\times K).\C_2$ with $M=\PSL_m(2)$ for some $1<m<6$. Since $\overline{G_v}=\overline{G_{uv}}\,\overline{G_{vw}}$ with $\overline{G_v}=\PSL_6(2)$ or $\PSL_6(2).\C_2$, computation in \magma~\cite{magma} shows that, interchanging $G_{uv}$ and $G_{vw}$ if necessary, we have $\overline{G_{uv}}\leqslant\GaL_2(8).\C_2$, $\GaL_3(4).\C_2$ or $\Sp_6(2).\C_2$ and $\overline{G_{vw}}=\C_2^5\rtimes\PSL_5(2)$, $\PSL_5(2)$ or $\PSL_5(2).\C_2$. In particular, $\PSL_5(2)$ is a composition factor of $G_{vw}$. Thus $\PSL_5(2)$ is a composition factor of $G_{uv}$ as $G_{uv}\cong G_{vw}$. Since $\overline{G_{uv}}\leqslant\GaL_2(8)$, $\GaL_3(4)$ or $\Sp_6(2)$, we see that $\PSL_5(2)$ is not a composition factor of $\overline{G_{uv}}$. Therefore, $\PSL_5(2)$ is a composition factor of $G_{uv}\cap C$ and hence a composition factor of $G_{uv}\cap M$, which indicates that $m=5$ and $M\leqslant G_{uv}$. Now as $G_{uv}$ has a normal subgroup $M\cong\PSL_5(2)$, $G_{vw}$ also has a normal subgroup isomorphic to $\PSL_5(2)$, say $N$. Then $N\cap C=1$ or $N$, since $N$ is simple. If $N\cap C=N$, then $M=N=\PSL_5(2)$ and hence $G_{vw}$ has $\PSL_5(2)$ as a composition factor of multiplicity $2$. This would imply that $G_{uv}$ has $\PSL_5(2)$ as a composition factor of multiplicity $2$, which is not possible as $G_{uv}/C=\overline{G_{uv}}\leqslant\GaL_2(8).\C_2$, $\GaL_3(4).\C_2$ or $\Sp_6(2).\C_2$. Consequently, $N\cap C=1$, and so $\overline{G_{vw}}$ has a normal subgroup $NC/C\cong N$. Since $\C_2^5\rtimes\PSL_5(2)$ does not have a normal subgroup isomorphic to $\PSL_5(2)$, it follows that $\overline{G_{vw}}=\PSL_5(2)$ or $\PSL_5(2).\C_2$. Then searching in \magma~\cite{magma} for the factorization $\overline{G_v}=\overline{G_{uv}}\,\overline{G_{vw}}$ with $\PSL_6(2)\leqslant\overline{G_v}\leqslant\PSL_6(2).\C_2$ and $\PSL_5(2)\leqslant\overline{G_v}\leqslant\PSL_5(2).\C_2$ we deduce that $\overline{G_{uv}}$ has $\PSL_3(4)$, $\PSU_3(3)$ or $\Sp_6(2)$ as a composition factor. This implies that $G_{vw}$ has $\PSL_3(4)$, $\PSU_3(3)$ or $\Sp_6(2)$ as a composition factor, and so $G_{vw}\cap C$ has one of these groups as a composition factor since $\overline{G_{vw}}=\PSL_5(2)$. However, no subgroup of $C\leqslant\PSL_5(2).\C_2$ has $\PSL_3(4)$, $\PSU_3(3)$ or $\Sp_6(2)$ as a composition factor, a contradiction.
\end{proof}

\begin{lemma}\label{lem29}
If Hypothesis~$\ref{hyp1}$ holds then $G_v$ is not a $\calC_5$-subgroup of $G$.
\end{lemma}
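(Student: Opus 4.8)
The plan is to follow the strategy of Lemma~\ref{lem23}. Since the $\calC_5$-subgroups of $G$ are the subfield subgroups, \cite[Proposition~4.5.3]{KL1990} gives $q=q_0^r$ for a prime $r$, with $X_v$ the preimage of a subfield subgroup $\SL_n(q_0)$ of $X$; thus $S:=G_v^{(\infty)}\cong\PSL_n(q_0)$ is a nontrivial normal subgroup of $G_v$ and $G_v/\Rad(G_v)$ is almost simple with socle $S$, after discarding via \cite[Table~3.5.H]{KL1990} and \cite{BHR2013} the finitely many configurations in which $\PSL_n(q_0)$ is soluble or $G_v$ fails to be maximal. Writing $\overline{H}=H\Rad(G_v)/\Rad(G_v)$ for $H\in\{G_v,G_{uv},G_{vw}\}$, we have $\overline{G_v}=\overline{G_{uv}}\,\overline{G_{vw}}$, and $\overline{G_{uv}}$, $\overline{G_{vw}}$ have the same insoluble composition factors because $G_{uv}\cong G_{vw}$ and $\Rad(G_v)$ is soluble.

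The crucial step is to show that both $\overline{G_{uv}}$ and $\overline{G_{vw}}$ contain $S$. From $\pi(G_v)=\pi(G_{uv})=\pi(G_{vw})$ one has $\pi(\overline{G_{uv}})\cap\pi(\overline{G_{vw}})\supseteq\pi(G_v)\setminus\pi(\Rad(G_v))$, and $\pi(\Rad(G_v))$ is contained in $\pi(q_0-1)\cup\pi(\Out(S))$ apart from the primes dividing $2f$ — essentially just $r$, and $2$ when $n=2$. For such a prime $\ell\in\pi(S)\setminus(\pi(q_0-1)\cup\pi(\Out(S)))$ I would use the order inequality $|G_{uv}|^2\geqslant|G_v|$ together with $|\Rad(G_v)|_\ell$ being at most $\ell$ to force $\ell\in\pi(\overline{G_{uv}})\cap\pi(\overline{G_{vw}})$, as long as $|S|_\ell>\ell$. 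Lemma~\ref{lem19} then yields $S\leqslant\overline{G_{uv}}$ or $S\leqslant\overline{G_{vw}}$, hence both by the equality of composition factors, except in its exceptions $(n,q_0)\in\{(2,8),(2,9)\}$ and in the cases where $|S|_\ell=\ell$; when $S\cong\A_m$ with $m\geqslant7$ — only $\PSL_4(2)\cong\A_8$ — I would instead apply Lemma~\ref{lem10}. In the residual configurations $\overline{G_v}$ is almost simple with socle $\PSL_2(8)$, $\PSL_2(9)\cong\A_6$, $\PSL_2(4)\cong\A_5$, or $\PSL_2(p)$, and the classification of factorizations of such groups (\cite[Tables~1--3]{LPS1990}, \cite{LPS1996}, as used to derive Lemmas~\ref{lem19}--\ref{lem24}), combined with the isomorphism $G_{uv}\cong G_{vw}$ and the smallness of $\Rad(G_v)$, again forces both factors to contain $S$, apart from the bounded family with $S\cong\PSL_2(8),\A_5,\A_6$ — the $\A_6$ case being the delicate one, since Table~\ref{tab1} permits the factorization $\A_6=\A_5\cdot\A_5$ — which is settled by a \magma~\cite{magma} search showing there is no admissible homogeneous factorization $G_v=G_{uv}G_{vw}$ with $G_{uv},G_{vw}$ conjugate in $G$ and $|G_v|/|G_{uv}|\geqslant3$.

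Since $S=G_v^{(\infty)}$ maps isomorphically onto $\Soc(\overline{G_v})$, having $S\leqslant\overline{G_{uv}}$ and $S\leqslant\overline{G_{vw}}$ yields $G_v^{(\infty)}=G_{uv}^{(\infty)}=G_{vw}^{(\infty)}$, so
\[
(G_v^{(\infty)})^{\overline{g}}=(G_{uv}^{\overline{g}})^{(\infty)}=G_{vw}^{(\infty)}=G_v^{(\infty)},
\]
contradicting Lemma~\ref{lem26} since $G_v^{(\infty)}$ is a nontrivial normal subgroup of $G_v$.

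I expect the main difficulty to lie in the crucial step: a field automorphism of order divisible by $r$ can sit inside $\Rad(G_v)$, so $\pi(\Rad(G_v))$ need not be contained in $\pi(q_0-1)\cup\pi(\Out(S))$ and Lemma~\ref{lem19} cannot be applied directly; the order count above repairs this in general, while the low-rank and small-field remnants (notably $n=2$ and the $\A_6=\A_5\cdot\A_5$ factorization allowed by Table~\ref{tab1}) must be disposed of via the factorization classification together with \magma~\cite{magma}.
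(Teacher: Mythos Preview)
Your overall architecture mirrors the paper's: pass to the almost simple quotient of $G_v$, show that both factors of the induced factorization contain the socle $\PSL_n(q_0)$, and finish with Lemma~\ref{lem26}. The paper does this slightly differently, quotienting by $N=\Z(G_v)$ (which here coincides with $\Rad(G_v)$ and has order $1$ or $r$) and invoking \cite[Corollary~5]{LPS2000} after verifying $|\overline{G_v}|\mid|\overline{G_{vw}}|^2$, rather than Lemma~\ref{lem19}; but this is a matter of packaging.

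There is, however, a genuine gap in your treatment of the $\A_6$ case. When $(n,q_0)=(2,9)$ we have $q=9^r$ for a prime $r$, so $L=\PSL_2(9^r)$ runs over an \emph{infinite} family, and $G_v$ (which can contain a field automorphism of order $r$ centralising $L_v$) varies with $r$ as well. A \textsc{Magma} search therefore cannot ``settle'' this family. The paper handles it by a uniform counting argument: for odd $r$ one has $L_v=\PSL_2(9)\cong\A_6$ and $L_{uv}\cong L_{vw}\cong\A_5$; using $\Nor_L(L_v)=L_v$ and $\Nor_L(L_{vw})=L_{vw}$, a double count of pairs $(N_1,N_2)$ with $N_1\cong\A_6$, $N_2\cong\A_5$, $N_2<N_1$ shows that $L_{vw}$ lies in exactly one subgroup of $L$ isomorphic to $\A_6$, whence $L_v=L_v^{\overline g}$, contradicting Lemma~\ref{lem26}. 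You need an argument of this kind (or some other uniform one) in place of the proposed computation.

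A secondary point: your ``order count'' repair for the prime $r$ is not quite airtight as stated. From $|G_{uv}|_r^2\geqslant|G_v|_r$ and $|\Rad(G_v)|_r\leqslant r$ you only get $|G_{uv}|_r\geqslant\sqrt{|G_v|_r}$, which forces $r\in\pi(\overline{G_{uv}})$ when $|G_v|_r>r^2$, i.e.\ essentially when $|S|_r\geqslant r^2$; the residual cases with $|S|_r=r$ are not confined to $S\cong\A_m$ (for instance $S=\PSL_3(2)$, $r=7$). The paper sidesteps this by using \cite[Corollary~5]{LPS2000} rather than Lemma~\ref{lem19}, which delivers the short explicit list (i)--(iv) directly.
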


\begin{proof}
Suppose that Hypothesis~\ref{hyp1} holds and $G_v$ is a $\calC_5$-subgroup of $G$. Then there exists a prime $r$ such that $q^{1/r}$ is a power of $p$ and $L_v$ is described in~\cite[Proposition~4.5.3]{KL1990}. If $n=2$ and $q^{1/r}=2$, then $q=4$ by~\cite[Table~8.1]{BHR2013}. However, in this case $G_v=\Sy_3$ or $\Sy_3\times\C_2$, which have no factorization $G_v=G_{uv}G_{vw}$ with $|G_v|/|G_{uv}|\geqslant2$ such that $G_{uv}$ and $G_{vw}$ are conjugate in $G$. If $(n,q^{1/r})=(2,3)$ then $\A_4\leqslant G_v\leqslant\Sy_4\times\C_r$ since $\PGL_2(3)\cong\Sy_4$. However, in this case $G_v$ does not have an appropriate factorization $G_v=G_{uv}G_{vw}$. Thus $(n,q^{1/r})\neq(2,2)$ or $(2,3)$. Hence $\G_v^{(\infty)}=\PSL_n(q^{1/r})$. Then we see from~\cite[Proposition~4.5.3]{KL1990} that $N:=\Z(G_v)$ has order $1$ or $r$, and $G_v/N$ is an almost simple group with socle $\PSL_n(q^{1/r})$.

For any subgroup $H$ of $G_v$ denote $\overline{H}=HN/N$. Since $G_v=G_{uv}G_{vw}$ we have $\overline{G_v}=\overline{G_{uv}}\,\overline{G_{vw}}$, whence $|\overline{G_v}|$ divides $|\overline{G_{uv}}||\overline{G_{vw}}|$. If at least one of $G_{uv}$ or $G_{vw}$, say $G_{vw}$, does not contain $N$, then $G_{vw}\cap N=1$ and so $|\overline{G_{vw}}|=|G_{vw}|=|G_{uv}|$ is divisible by $|\overline{G_{uv}}|$, which implies that $|\overline{G_v}|$ divides $|\overline{G_{vw}}|^2$. If both $G_{uv}$ and $G_{vw}$ contain $N$, then $|\overline{G_{uv}}|=|G_{uv}|/r=|G_{vw}|/r=|\overline{G_{vw}}|$ and so $|\overline{G_v}|$ divides $|\overline{G_{uv}}|^2=|\overline{G_{vw}}|^2$. Therefore, $|\overline{G_v}|$ always divides at least one of $|\overline{G_{uv}}|^2$ or $|\overline{G_{vw}}|^2$, say $|\overline{G_{vw}}|^2$. Thus by~\cite[Corollary~5]{LPS2000} the pair $(\overline{G_v},\overline{G_{vw}})$ is described in~\cite[Table~10.7]{LPS2000}. Checking the condition that $|\overline{G_v}|$ divides $|\overline{G_{vw}}|^2$ for the candidates we obtain one of the following:
\begin{itemize}
\item[(i)] $\overline{G_{vw}}\geqslant\Soc(\overline{G_v})$;
\item[(ii)] $n=2$, $q^{1/r}=9$ and $\overline{G_{vw}}\cap\Soc(\overline{G_v})=\A_5$;
\item[(iii)] $n=4$, $q^{1/r}=2$ and $\overline{G_{vw}}\cap\Soc(\overline{G_v})=\A_7$;
\item[(iv)] $n=6$, $q^{1/r}=2$ and $\overline{G_{vw}}\cap\Soc(\overline{G_v})=\PSL_2(5)$ or $\C_2^5\rtimes\PSL_5(2)$.
\end{itemize}

First assume that~(i) occurs. Then $G_{vw}\geqslant G_v^{(\infty)}=\PSL_n(q^{1/r})$. Since $G_{uv}\cong G_{vw}$, it follows that $G_{uv}\geqslant G_v^{(\infty)}$ and so $G_{uv}^{(\infty)}=G_v^{(\infty)}=G_{vw}^{(\infty)}$. Since $(G_{uv}^{(\infty)})^{\overline{g}}=G_{vw}^{(\infty)}$, this implies that ${\overline{g}}$ normalizes $G_v^{(\infty)}$, contradicting Lemma~\ref{lem26}.

Next assume that~(ii) occurs. Here $L=\PSL_2(9^r)$, and $L_v=\PGL_2(9)$ if $r=2$ and $\PSL_2(9)$ if $r>2$. If $r=2$, then $G_v$ does not have a nontrivial homogeneous factorization, a contradiction. Therefore, $r$ is an odd prime, $L_v=\PSL_2(9)$ and $L_{uv}\cong L_{vw}\cong\A_5$. By~\cite[Table~8.1]{BHR2013}, $L$ has exactly one conjugacy class of subgroups isomorphic to $L_v$, and $\Nor_L(L_v)=L_v$. Note that $3^r\equiv\pm2\pmod{5}$ as $r$ is odd. Thus we see that $|\PGL_2(3^r)|$ is not divisible by $5$ and hence not divisible by $|\Nor_L(L_{vw})|$. Then~\cite[Table~8.1]{BHR2013} implies that the only maximal subgroups of $L$ that may contain $\Nor_L(L_{vw})$ are those isomorphic to $\PSL_2(9)$. Hence $\Nor_L(L_{vw})=L_{vw}$. Let $m$ be the number of subgroups of $G$ isomorphic to $L_v$ that contain $L_{vw}$. Note that $L_v=\PSL_2(9)$ has $12$ distinct subgroups isomorphic to $\A_5$, and there are exactly two conjugacy classes of such subgroups in $L$ (see for example~\cite[Exercise~2,~Page~416]{Suzuki1982}). By counting the number of pairs $(N_1,N_2)$ of subgroups of $L$ such that $N_1$ is isomorphic to $L_v$ and $N_1>N_2\cong L_{vw}$, one obtains
\[
\frac{|L|}{|\Nor_L(L_v)|}\cdot12=2\cdot\frac{|L|}{|\Nor_L(L_{vw})|}\cdot m.
\]
Accordingly, $L_{vw}$ is contained in exactly
\[
m=\frac{12|\Nor_L(L_{vw})|}{2|\Nor_L(L_v)|}=\frac{12|L_{vw}|}{2|L_v|}=1
\]
subgroup of $L$ that is isomorphic to $L_v$. Since $L_{vw}$ is contained in both $L_v$ and $L_w=L_v^{\overline{g}}$, we conclude that $L_v=L_w=L_v^{\overline{g}}$, contradicting Lemma~\ref{lem26}.

Finally assume that~(iii) or~(iv) occurs. Then we have seen above that either $|\overline{G_{vw}}|=|G_{vw}|=|G_{uv}|$ or $|\overline{G_{uv}}|=|\overline{G_{vw}}|$. For the former, $|\overline{G_{vw}}|/|\overline{G_{uv}}|=|G_{uv}|/|\overline{G_{uv}}|$ is a divisor of $|N|$. Thus we always have $|\overline{G_{vw}}|/|\overline{G_{uv}}|=1$ or $r$. However, a \magma~\cite{magma} calculation shows that there is no factorization $\overline{G_v}=\overline{G_{uv}}\,\overline{G_{vw}}$ with $(\Soc(\overline{G_v}),\overline{G_{vw}}\cap\Soc(\overline{G_v}))=(\A_8,\A_7)$, $(\PSL_6(2),\PSL_2(5))$ or $(\PSL_6(2),\C_2^5\rtimes\PSL_5(2))$ and $|\overline{G_{vw}}|/|\overline{G_{uv}}|$ being $1$ or a prime. This contradiction completes the proof.
\end{proof}

\begin{lemma}\label{lem25}
Let $X=\Sp_{2m}(r)$ with $r$ prime and $r^m\geqslant5$. Then $X$ does not have a subgroup of index $d$ such that $r^{2m}$ divides $2(r-1)d$ and $d$ divides $2(r-1)r^{2m}$.
\end{lemma}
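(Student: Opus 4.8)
\emph{Proof plan.} Suppose, for a contradiction, that $H\leqslant X=\Sp_{2m}(r)$ has index $d$ with $r^{2m}\mid2(r-1)d$ and $d\mid2(r-1)r^{2m}$. The first step is to pin down $d$. If $r$ is odd then $\gcd(r,2(r-1))=1$, so $r^{2m}\mid2(r-1)d$ already forces $r^{2m}\mid d$, whence $d=r^{2m}e$ with $e\mid2(r-1)$; moreover $d\mid|X|$ and $|X|_r=r^{m^2}$ give $2m\leqslant m^2$, so $m\geqslant2$. If $r=2$ then $2(r-1)=2$ and the conditions read $2^{2m-1}\mid d\mid2^{2m+1}$, so $d$ is a power of $2$ and $r^m\geqslant5$ forces $m\geqslant3$. (When $m=1$ and $r$ is odd the conditions contradict $d\mid r(r^2-1)$, so there is nothing to prove.) In every case $d\leqslant2(r-1)r^{2m}$, so $H$ lies in a maximal subgroup $M$ of $X$ with $|X:M|$ dividing $d$; since $\Sp_{2m}(r)$ is perfect when $r^m\geqslant5$ it has no subgroup of index $2$, so every such $M$ contains $\Z(X)$ and is, modulo that centre, a maximal subgroup of $\PSp_{2m}(r)$.

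The second step is to control which $M$ can occur, using Aschbacher's theorem and the orders in \cite[Table~3.5.C]{KL1990} (with \cite{BHR2013} for the low-dimensional structure). The expectation is that, once $m$ is large enough, the only maximal subgroups of index at most $2(r-1)r^{2m}$ are the parabolic subgroups $P_i$ (stabilisers of totally isotropic $i$-subspaces of $\bbF_r^{2m}$) and the subgroups $\mathrm{GO}^\varepsilon_{2m}(r)$ of class $\calC_8$: the remaining maximal subgroups --- the $\calC_1$-stabilisers of nondegenerate subspaces (index $\sim r^{4m-4}$), and all members of $\calC_2,\calC_3,\calC_6,\calC_7,\calC_9$ (index $\gtrsim r^{m(m+1)/2}$ or larger) --- have index exceeding $2(r-1)r^{2m}$ for $m$ beyond a small bound. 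Parabolics are then ruled out because $|X:P_i|$, being the number of totally isotropic $i$-subspaces, is a polynomial in $r$ with constant term $1$ and hence coprime to $r$: so $|X:P_i|\mid d$ would force $|X:P_i|\mid e\mid2(r-1)$ (or $|X:P_i|=1$ when $r=2$), contradicting $|X:P_1|=(r^{2m}-1)/(r-1)>2(r-1)$ for $m\geqslant2$. And $\mathrm{GO}^\varepsilon_{2m}(r)$ (for $r$ odd) is ruled out because its index $r^m(r^m+\varepsilon)/2$ has $r'$-part $(r^m+\varepsilon)/2$, which would have to divide $e\mid2(r-1)$; since $r^m+\varepsilon\geqslant r^2-1$ this fails for $m\geqslant2$ apart from the single near-miss $(m,r,\varepsilon)=(2,3,-)$. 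When $r=2$ there is a shortcut that bypasses this entirely: $d$ is then a prime power, and $\Sp_{2m}(2)$ with $m\geqslant3$ is a simple group which, by Guralnick's classification of simple groups possessing a subgroup of prime-power index, has no such subgroup.

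The third step handles the finitely many pairs $(m,r)$ for which the index estimates of step two do not bite --- essentially $m=2$ with $r$ small, $m=3$ with $r\in\{3,5\}$, and $(m,r)=(4,2)$ --- by enumerating in \magma~\cite{magma} the subgroups of $X$ of each admissible index $d$ and checking directly that none is as forbidden. The step I expect to be the main obstacle is the second one: marshalling the index comparisons uniformly across all of Aschbacher's classes so as to isolate parabolics and $\calC_8$-subgroups as the only surviving candidates for large $m$, and then correctly identifying and computationally settling the residual small pairs $(m,r)$ --- in particular the delicate case $(m,r)=(2,3)$, where $\PSp_4(3)\cong\PSU_4(2)$ and a careful machine check is needed.
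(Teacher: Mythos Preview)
Your plan is correct and shares the paper's opening moves---Guralnick~\cite{Guralnick1983} for $r=2$, and the observation that $r^{2m}\mid d$ forces $m\geqslant2$ when $r$ is odd---but diverges for $m\geqslant3$. Where you propose a class-by-class Aschbacher survey comparing indices against $2(r-1)r^{2m}$, the paper dispatches $m\geqslant3$ in two lines: pass to $\overline{X}=\PSp_{2m}(r)$, note that $|\overline{X}:\overline{Y}|$ divides $2(r-1)r^{2m}$, and invoke \cite[Theorem~4]{LPS2000} directly to conclude $\overline{Y}=\overline{X}$, whence $d\mid|\Z(X)|\leqslant2$, contradicting $r^{2m}\mid d$. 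For $m=2$ the paper also argues uniformly in $r$, this time via \cite[Table~8.12]{BHR2013}: the only maximals of $\Sp_4(r)$ with index dividing $2(r-1)r^4$ occur at $r=3$ (namely $\Sp_2(9)\rtimes\C_2$ and $2_-^{1+4}.\A_5$), and a short check shows neither contains a subgroup of the required index. Your route is more self-contained but considerably longer, and your step three as written is imprecise: ``$m=2$ with $r$ small'' does not suffice, because some maximals of $\Sp_4(r)$---for instance the stabiliser of a nondegenerate $2$-space, of index $r^2(r^2+1)$---have index below $2(r-1)r^4$ for \emph{every} $r$, so the raw-index estimate never bites at $m=2$. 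To repair this you must apply your $r'$-part divisibility argument (which you currently reserve for parabolics and $\GO^\varepsilon$) to every Aschbacher class, or simply read off \cite[Table~8.12]{BHR2013} as the paper does.
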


\begin{proof}
Suppose for a contradiction that $X$ has a subgroup $Y$ of index $d$ such that $r^{2m}$ divides $2(r-1)d$ and $d$ divides $2(r-1)r^{2m}$. If $r=2$, then $X\cong\PSp_{2m}(2)$ with $m\geqslant3$ and $d$ is a power of $2$ such that $2^{2m-1}\leqslant d\leqslant2^{2m+1}$, contradicting~\cite{Guralnick1983}. Thus, $r\geqslant3$ and so $r^{2m}$ divides $d$. As $|\Sp_2(r)|_r=r$, this implies that $m\neq1$.

First assume that $m=2$. Let $M$ be a maximal subgroup of $X=\Sp_4(r)$ containing $Y$. Then $|X|/|M|$ divides $2(r-1)r^4$. Checking~\cite[Table~8.12]{BHR2013} we deduce that $r=3$ and $M=\Sp_2(9)\rtimes\C_2$ or $2_-^{1+4}.\A_5$. However, such an $M$ does not have a subgroup $Y$ such that $3^4$ divides $d$ and $d$ divides $2^2\cdot3^4$, a contradiction.

Next assume that $m\geqslant3$. Let $Z$ be the center of $X$, $\overline{X}=X/Z$ and $\overline{Y}=YZ/Z$. Then $|\overline{X}|/|\overline{Y}|$ divides $2(r-1)r^{2m}$, and thus from~\cite[Theorem~4]{LPS2000} we infer that $\overline{X}=\overline{Y}$. However, this implies that $d=|X|/|Y|$ divides $|X|/|\overline{Y}|=|X|/|\overline{X}|=|Z|$, contradicting the condition that $r^{2m}$ divides $d$.
\end{proof}

\begin{lemma}\label{lem17}
If Hypothesis~$\ref{hyp1}$ holds then $G_v$ is not a $\calC_6$-subgroup of $G$.
\end{lemma}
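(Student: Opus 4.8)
The plan is to begin from the description of $\calC_6$-subgroups in~\cite[Proposition~4.6.6]{KL1990} (see also~\cite[Table~3.5.A]{KL1990} and~\cite{BHR2013}): if $G_v$ is a $\calC_6$-subgroup then $q=p$ is prime, $n=r^m$ for some prime $r$ with $p\equiv1\pmod r$ (and $p\equiv1\pmod4$ if $r=2$ and $m\geqslant2$), the subgroup $R:=\bfO_r(G_v)$ is elementary abelian of order $r^{2m}$ and affords the natural symplectic $\bbF_r\Sp_{2m}(r)$-module, and $G_v/R$ is, after quotienting out its central involution when $r$ is odd, an almost simple group $\overline{G_v}$ with socle $\PSp_{2m}(r)$ whose outer part has order dividing $2(r-1)$. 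Since $q=p$ forces $\pi(\Out(L))\subseteq\{2,r\}$, this gives $\pi(G_v)=\pi(\PSp_{2m}(r))$. When $\Sp_{2m}(r)$ is not quasisimple, that is $(m,r)\in\{(1,2),(1,3),(2,2)\}$ and hence $n\in\{2,3,4\}$, the group $G_v$ is small, and exactly as in Lemmas~\ref{lem13} and~\ref{lem23} one checks by hand or with \magma~\cite{magma} that $G_v$ has no factorization $G_v=G_{uv}G_{vw}$ with $|G_v|/|G_{uv}|\geqslant2$ and $G_{uv},G_{vw}$ conjugate in $G$, contradicting Hypothesis~\ref{hyp1}. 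So assume henceforth $(m,r)$ avoids this list, so that $\PSp_{2m}(r)$ is a nonabelian simple group and $r^m\geqslant5$.

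Next I would push the factorization $G_v=G_{uv}G_{vw}$ through the quotient map modulo $R$ (and the central involution), obtaining $\overline{G_v}=\overline{G_{uv}}\,\overline{G_{vw}}$. As $R$ is an $r$-group and $\pi(G_{uv})=\pi(G_{vw})=\pi(G_v)=\pi(\PSp_{2m}(r))$,
\[
\pi(\overline{G_{uv}}),\ \pi(\overline{G_{vw}})\ \supseteq\ \pi(\PSp_{2m}(r))\setminus\{2,r\}\ \supseteq\ \pi(\overline{G_v})\setminus\pi(r(r-1)),
\]
so Lemma~\ref{lem24} applies with the prime $r$ in the role of $p$. Interchanging $G_{uv}$ and $G_{vw}$ if necessary, it yields either $\overline{G_{uv}}\geqslant\PSp_{2m}(r)$, or one of the exceptional configurations~(b),~(c) of Lemma~\ref{lem24}, both of which force $m=1$, hence $n=r$ with $r$ a Mersenne prime or $r=7$. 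These two configurations I would handle directly: in them $\overline{G_{uv}}\cap\PSp_{2m}(r)$ and $\overline{G_{vw}}\cap\PSp_{2m}(r)$ have incompatible $2$-parts (namely $r+1=2^a$ versus an odd number for a Mersenne prime $r$, and $1$ versus $|\Sy_4|_2$ when $r=7$), while the outer part of $\overline{G_v}$ has $2$-part at most $4$ and $R$ has odd order; hence $|G_{uv}|_2\neq|G_{vw}|_2$, contradicting $G_{uv}\cong G_{vw}$.

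It then remains to treat the main case $\overline{G_{uv}}\geqslant\PSp_{2m}(r)$. Since $G_{uv}\cong G_{vw}$ and only an $\{2,r\}$-group was quotiented out, $\overline{G_{uv}}$ and $\overline{G_{vw}}$ have the same insoluble composition factors, so, $\overline{G_v}$ being almost simple with socle $\PSp_{2m}(r)$, we also get $\overline{G_{vw}}\geqslant\PSp_{2m}(r)$. Because $\Sp_{2m}(r)$ acts irreducibly on $R$, the images of $G_{uv}$ and $G_{vw}$ on $R$ are irreducible, so $R\cap G_{uv}$ and $R\cap G_{vw}$ each equal $1$ or $R$; comparing $r$-parts in $|G_{uv}|=|G_{vw}|$ rules out the mixed possibility. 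If $R\leqslant G_{uv}\cap G_{vw}$, then $R=\bfO_r(G_{uv})=\bfO_r(G_{vw})$ (as $\bfO_r(\overline{G_{uv}})=\bfO_r(\overline{G_{vw}})=1$), whence $R^{\overline{g}}=\bfO_r(G_{uv})^{\overline{g}}=\bfO_r(G_{uv}^{\overline{g}})=\bfO_r(G_{vw})=R$; since $R$ is normal in $G_v$, this contradicts Lemma~\ref{lem26}. If instead $R\cap G_{uv}=R\cap G_{vw}=1$, then $G_{uv}$ and $G_{vw}$ embed into $\overline{G_v}$, and I would play the valency $|G_v:G_{uv}|$ off against $|G_v|\di|G_{uv}|^2$: writing the valency as $r^{2m}\cdot|\overline{G_v}:\overline{G_{uv}}|$ and unwinding the divisibility produces inside $G_{uv}$ a subgroup of $\Sp_{2m}(r)$ whose index $d$ satisfies $r^{2m}\di2(r-1)d$ and $d\di2(r-1)r^{2m}$, which is impossible by Lemma~\ref{lem25} because $r^m\geqslant5$.

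The step I expect to be the main obstacle is making the passage between $G_v$ and $\overline{G_v}$ rigorous: identifying $\bfO_r(G_v)$ and pinning down the precise shape of $\overline{G_v}$ as an almost simple group with socle $\PSp_{2m}(r)$ (the exact role of the similitude factor $\C_{r-1}$ and of the graph automorphism of $L$), and, in the subcase $R\cap G_{uv}=1$, extracting from the valency count exactly the subgroup of $\Sp_{2m}(r)$ whose index is forbidden by Lemma~\ref{lem25}. Verifying that the cases requiring a \magma~computation are precisely those with $n\in\{2,3,4\}$ also needs some care.
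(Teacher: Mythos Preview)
Your approach is essentially the same as the paper's: reduce the small cases $n\leqslant4$ to a computation, apply Lemma~\ref{lem24} to the almost simple quotient, eliminate the exceptional rows of that lemma by a $2$-part count, and in the main case use irreducibility of $\Sp_{2m}(r)$ on $R$ together with Lemma~\ref{lem25}.

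Two remarks on the details. First, your opening claim that $q=p$ is not quite right: for a $\calC_6$-subgroup one has $q=p^f$ with $f$ the least (odd) positive integer such that the relevant congruence modulo $r$ holds, and the paper uses exactly this to get $f\mid r-1$ and hence $|G/L|\mid 2(r-1)$. Your argument only needs the latter divisibility, so this inaccuracy is harmless once corrected. Second, in the subcase $R\cap G_{uv}=R\cap G_{vw}=1$ your plan to ``unwind the divisibility from $|G_v|\mid|G_{uv}|^2$'' is not enough by itself to manufacture a subgroup with the forbidden index; what is actually used (and what you should make explicit) is the $2$-arc-transitivity equality $|G_v|/|G_{uv}|=|G_{uv}|/|G_{uvw}|$. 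The paper then passes to $L$: from $\overline{G_{uv}},\overline{G_{vw}}\geqslant\PSp_{2m}(r)$ it deduces $L_{uv}\cong L_{vw}\cong\Sp_{2m}(r)$, and it is the subgroup $L_{uvw}\leqslant L_{uv}$ whose index $d$ is shown to satisfy $r^{2m}\mid 2(r-1)d$ and $d\mid 2(r-1)r^{2m}$, contradicting Lemma~\ref{lem25}. Your acknowledged ``main obstacle'' is precisely this identification.

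One genuine difference: for the subcase $R\leqslant G_{uv}\cap G_{vw}$ you invoke Lemma~\ref{lem26} via $R=\bfO_r(G_{uv})=\bfO_r(G_{vw})$ and $R^{\overline g}=R$, whereas the paper instead argues at the level of $L$ that $L_{uv}=L_v$ and appeals to the valency bound. Your route here is clean and arguably more transparent.
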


\begin{proof}
Suppose that Hypothesis~\ref{hyp1} holds and $G_v$ is a $\calC_6$-subgroup of $G$. If $n\leqslant4$ then the possibilities for $G_v$ can be obtained from~\cite[Tables~8.1,~8.3~and~8.8]{BHR2013} and in all these cases, a computation in \magma~\cite{magma} shows that $G_v$ has no homogeneous factorization $G_v=G_{uv}G_{vw}$ such that $|G_v|/|G_{uv}|\geqslant3$. Thus $n\geqslant5$, and so by~\cite[Propositions~4.6.5--4.6.6]{KL1990}, $n=r^m$ for some prime $r\neq p$ and $L_v=\C_r^{2m}.\Sp_{2m}(r)$. Moreover, since $G_v$ is maximal in $G$, it can be read off from~\cite[Tables~8.18,~8.35,~8.44~and~8.54]{BHR2013} and \cite[Table~3.5.A]{KL1990} that $G/L\leqslant\C_2\times\C_f$ and $f$ is the smallest odd integer such that $r\gcd(2,r)$ divides $p^f-1$. Consequently, $f$ divides $r-1$ and $|G/L|$ divides $2(r-1)$. Therefore, $G_v/\Rad(G_v)$ is an almost simple group with socle $\PSp_{2m}(r)$ and
\[
\pi(\Rad(G_v))\subseteq\pi(\Rad(L_v))\cup\pi(G/L)\subseteq\{2,r\}\cup\pi(2(r-1))=\pi(r(r-1)).
\]
For any subgroup $H$ of $G_v$ denote $\overline{H}=H\Rad(G_v)/\Rad(G_v)$. Then $\overline{G_v}=\overline{G_{uv}}\,\overline{G_{vw}}$ with $\pi(\overline{G_{uv}})$ and $\pi(\overline{G_{vw}})$ both containing $\pi(\overline{G_v})\setminus\pi(r(r-1))$. Hence we deduce from Lemma~\ref{lem24} that
one of the following holds:
\begin{itemize}
\item[(i)] at least one of $\overline{G_{uv}}$ or $\overline{G_{vw}}$ contains $\Soc(\overline{G_v})$;
\item[(ii)] $m=1$, $r$ is a Mersenne prime, and at least one of $\overline{G_{uv}}$ or $\overline{G_{vw}}$ has intersection with $\Soc(\overline{G_v})$ of odd order.
\end{itemize}

First assume case~(i). Without loss of generality, assume that $\overline{G_{uv}}$ contains $\Soc(\overline{G_v})=\PSp_{2m}(r)$. Note that $G_{uv}\cong G_{vw}$, and $G_{uv}/L_{uv}$ and $G_{vw}/L_{vw}$ are both soluble. Hence $\overline{L_{uv}}$ and $\overline{L_{vw}}$ both have $\PSp_{2m}(r)$ as an insoluble composition factor. It follows that
\[
L_{uv}\bfO_r(L_v)/\bfO_r(L_v)=L_{vw}\bfO_r(L_v)/\bfO_r(L_v)=L_v/\bfO_r(L_v)=\Sp_{2m}(r).
\]
Since $L_v/\bfO_r(L_v)=\Sp_{2m}(r)$ is irreducible on $\bfO_r(L_v)=\C_r^{2m}$, we deduce that $L_{uv}\cap\bfO_r(L_v)=1$ or $\C_r^{2m}$. If $L_{uv}\cap\bfO_r(L_v)=\C_r^{2m}$ then $|L_{uv}|=|L_v|$, which is not possible as $\Gamma$ has valency at least $2$. Consequently, $L_{uv}\cap\bfO_r(L_v)=1$. Similarly, $L_{vw}\cap\bfO_r(L_v)=1$ and so $L_{uv}\cong L_{vw}\cong\Sp_{2m}(r)$. Note that $|G_v|/|G_{uv}|=|G_{uv}|/|G_{uvw}|$ as $\Gamma$ is $(G,2)$-arc-transitive. Since $|L_v|/|L_{uv}|$ divides $|G_v|/|G_{uv}|$, we conclude that $r^{2m}$ divides $|G_v|/|G_{uv}|=|G_{uv}|/|G_{uvw}|$ and so divides $|G/L||L_{uv}|/|L_{uvw}|$. This implies that $r^{2m}$ divides $2(r-1)|L_{uv}|/|L_{uvw}|$. Moreover, $|L_{uv}|/|L_{uvw}|$ divides $|G_{uv}|/|G_{uvw}|=|G_v|/|G_{uv}|$ and so divides $|G/L||L_v|/|L_{uv}|$, which implies that $|L_{uv}|/|L_{uvw}|$ divides $2(r-1)r^{2m}$. Thus $L_{uvw}$ is isomorphic to a subgroup of $\Sp_{2m}(r)$ of index $d$ such that $r^{2m}$ divides $2(r-1)d$ and $d$ divides $2(r-1)r^{2m}$, which is not possible by Lemma~\ref{lem25}.

Next assume case~(ii). Without loss of generality, assume that $|\overline{G_{uv}}\cap\Soc(\overline{G_v})|$ is odd. Then $|L_{uv}|_2\leqslant2$ and so $|G_{uv}|_2\leqslant2|G/L|_2$. Since $G_v=G_{uv}G_{vw}$ with $G_{uv}\cong G_{vw}$, we derive that $|G_v|$ divides $|G_{uv}|^2$. Thus $|L_v|_2|G/L|_2=|G_v|_2$ divides $(2|G/L|_2)^2$, which implies that $2(r+1)=|L_v|_2$ divides $4|G/L|_2$. However, as $|G/L|$ divides $2f$ and $f$ is odd, we have $|G/L|_2\leqslant2$. This leads to $2(r+1)\leqslant8$ and so $r\leqslant3$, contrary to the condition that $r=r^m=n\geqslant5$.
\end{proof}

We conclude this section with the following:

\begin{theorem}\label{thm3}
Let $G$ be an almost simple group with socle $\PSL_n(q)$. Then there is no $G$-vertex-primitive $(G,2)$-arc-transitive digraph $\Gamma$ such that $G_v$ is a maximal subgroup of $G$ from classes $\calC_3$--$\calC_6$ for any vertex $v$ of $\Gamma$.
\end{theorem}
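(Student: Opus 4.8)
The plan is to reduce immediately to the situation governed by Hypothesis~\ref{hyp1} and then invoke the four lemmas of this section, one for each of the classes $\calC_3$, $\calC_4$, $\calC_5$, $\calC_6$. So suppose, for a contradiction, that a digraph $\Gamma$ as in the statement exists, with $G_v$ a $\calC_i$-subgroup of $G$ for some $i\in\{3,4,5,6\}$.

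First I would verify that $\Gamma$ has valency at least $3$. Since $G$ is almost simple, it is in particular not cyclic of prime order, so $\Gamma$ cannot be a directed cycle of prime length (the full automorphism group of such a digraph is cyclic of prime order, and $G$ embeds in it). Hence Lemma~\ref{lem5} forces the valency of $\Gamma$ to be at least $3$. Writing $q=p^f$ with $p$ prime, and noting that a nontrivial normal subgroup of a primitive group is transitive (so that $L$ is vertex-transitive and an element $\overline g\in L$ with $u^{\overline g}=v$ does exist), every requirement of Hypothesis~\ref{hyp1} is now met; I adopt its notation, fixing the arc $u\rightarrow v$, the element $\overline g$, and the $2$-arc $u\rightarrow v\rightarrow w$ with $w=v^{\overline g}$.

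With Hypothesis~\ref{hyp1} in force, the conclusion is immediate: Lemma~\ref{lem23} rules out $i=3$, the subsequent lemma on $\calC_4$-subgroups rules out $i=4$, Lemma~\ref{lem29} rules out $i=5$, and Lemma~\ref{lem17} rules out $i=6$. In each case the cited lemma states that $G_v$ is \emph{not} a $\calC_i$-subgroup of $G$, contradicting our assumption, and the theorem follows.

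Consequently there is no residual obstacle in this last step: all of the difficulty has already been absorbed into the four lemmas, which between them depend on the factorization classifications recorded in Lemmas~\ref{lem19}, \ref{lem20} and~\ref{lem24}, on the maximal factorizations of low-dimensional classical groups taken from~\cite{LPS2000}, on the index estimate Lemma~\ref{lem25}, and on several \magma computations in small groups. The genuinely delicate points lie there --- for instance the $(k,q)=(6,2)$ exception treated in the $\calC_4$ analysis, and the subgroup-counting argument that disposes of case~(ii) in the $\calC_5$ analysis --- rather than in the present synthesis, which is routine.
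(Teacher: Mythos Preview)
Your proposal is correct and follows essentially the same route as the paper: reduce to Hypothesis~\ref{hyp1} via Lemma~\ref{lem5}, then invoke the four lemmas handling $\calC_3$--$\calC_6$ in turn. Your version is slightly more explicit in justifying why $\Gamma$ is not a directed prime cycle and why an element $\overline g\in L$ with $u^{\overline g}=v$ exists, but the argument is otherwise identical to the paper's.
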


\begin{proof}
Suppose that $\Gamma$ is a $G$-vertex-primitive $(G,2)$-arc-transitive digraph such that $G_v$ is a maximal subgroup of $G$ from classes $\calC_3$--$\calC_6$, where $v$ is a vertex of $\Gamma$. Then by Lemma~\ref{lem5}, $\Gamma$ has valency at least $3$. Hence Hypothesis~\ref{hyp1} holds. According to Lemmas~\ref{lem23}--\ref{lem29} and~\ref{lem17}, $G_v$ cannot be a $\calC_i$-subgroup of $G$ for $3\leqslant i\leqslant6$, a contradiction.
\end{proof}

\section{$\calC_7$ and $\calC_8$-subgroups}\label{sec2}

In this section we need to consider the stronger hypothesis that $\Gamma$ is $(G,3)$-arc-transitive so that we only need to consider the structure of $L_v$ instead of $G_v$.

\begin{hypothesis}\label{hyp2}
Let $\Gamma$ be a $G$-vertex-primitive $(G,3)$-arc-transitive digraph of valency at least $3$, where $G$ is almost simple with socle $L=\PSL_n(q)$ and $q=p^f$ for some prime $p$. Then by Lemma~\ref{lem7}, $\Gamma$ is $(L,2)$-arc-transitive. Take an arc $u\rightarrow v$ of $\Gamma$. Let $g$ to be an element of $L$ such that $u^g=v$ and let $w=v^g$. Then $u\rightarrow v\rightarrow w\rightarrow w^g$ is a $3$-arc in $\Gamma$.
\end{hypothesis}

Under Hypothesis~\ref{hyp2}, it follows from Lemma~\ref{lem15} that $L_v=L_{uv}L_{vw}$ and $G_{vw}=G_{uvw}G_{uvw}^g$. Moreover, these are homogeneous factorizations. Hence by Lemma~\ref{lem2}, $\pi(L_{uv})=\pi(L_{vw})=\pi(L_v)$ and $\pi(G_{uvw})=\pi(G_{vw})$.


\begin{lemma}\label{lem27}
If Hypothesis~$\ref{hyp2}$ holds then $G_v$ is not a $\calC_7$-subgroup of $G$.
\end{lemma}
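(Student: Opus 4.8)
The plan is to follow the proof of Lemma~\ref{lem18} closely, with the wreath product $\PGL_m(q)\wr\Sy_t$ playing the role of the imprimitive linear group appearing there. Suppose for a contradiction that Hypothesis~\ref{hyp2} holds and that $G_v$ is a $\calC_7$-subgroup of $G$. By~\cite[Proposition~4.7.3]{KL1990} there is a tensor-induced decomposition $\bbF_q^n=W_1\otimes\dots\otimes W_t$ with $\dim W_i=m\geqslant3$, $t\geqslant2$ and $n=m^t$; viewing $L_v$ inside $\PGL_n(q)$, we have $\PSL_m(q)\wr\Sy_t\leqslant L_v\leqslant\PGL_m(q)\wr\Sy_t$, and with $M:=L_v\cap\PGL_m(q)^t$ (the kernel of the action of $L_v$ on $\{W_1,\dots,W_t\}$) we have $N:=\PSL_m(q)^t\leqslant M\leqslant\PGL_m(q)^t$ and $\overline{L_v}:=L_v/M\cong\Sy_t$. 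Write $\varphi_i\colon M\to\PGL_m(q)$ for the $i$th coordinate projection and $\overline{H}=HM/M$ for $H\leqslant L_v$. Since $N$ is characteristic in $\PGL_m(q)^t$, it is a nontrivial normal subgroup of $L_v$, and the whole argument is geared towards showing that $g$ normalizes $N$, which will contradict Lemma~\ref{lem26}.

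From $L_v=L_{uv}L_{vw}$ we get $\Sy_t=\overline{L_v}=\overline{L_{uv}}\,\overline{L_{vw}}$, so Lemma~\ref{lem1} lets us assume, after swapping $L_{uv}$ and $L_{vw}$ if necessary, that $\overline{L_{uv}}$ is transitive on $\{W_1,\dots,W_t\}$. Applying Lemma~\ref{lem3} with $T=\PSL_m(q)$, $R=\PGL_m(q)$, $k=t$ yields $\varphi_1(L_{uv}\cap M)\cong\dots\cong\varphi_t(L_{uv}\cap M)$ and $\pi(\PSL_m(q))\subseteq\pi(\varphi_1(L_{uv}\cap M))$; note that here $\varphi_1(L_{uv}\cap M)$ is a subgroup of $\PGL_m(q)$ itself, so we obtain the \emph{full} prime set of $\PSL_m(q)$ (in contrast to Lemma~\ref{lem18}, where a quotient by a central $\C_{q-1}$ was unavoidable). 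Invoking \cite[Theorem~4]{LPS2000} exactly as in Lemma~\ref{lem18}, and keeping only those possibilities satisfying $\pi(\PSL_m(q))\subseteq\pi(\varphi_1(L_{uv}\cap M))$ with $m\geqslant3$, we find that either $\varphi_1(L_{uv}\cap M)$ is almost simple with socle $\PSL_m(q)$, or $(m,q)=(4,2)$ and $\varphi_1(L_{uv}\cap M)=\A_7$, or $(m,q)=(6,2)$ and $\varphi_1(L_{uv}\cap M)$ lies in a parabolic subgroup $P_1$ or $P_5$ of $\PSL_6(2)$. To dispose of the last two cases, note $L_{uv}\cap M$ embeds in $\prod_i\varphi_i(L_{uv}\cap M)$ while $N\leqslant M$ gives $|L_v|_r\geqslant|\PSL_m(q)|_r^t$ for every prime $r$ coprime to $|\Out(\PSL_m(q))|$; taking $r=2$ when $(m,q)=(4,2)$ and $r=3$ when $(m,q)=(6,2)$ one bounds the $r$-part of the valency $|L_v|/|L_{uv}|$ below by $2^{3t}$, respectively $3^{2t}$, and since $\Gamma$ is $(G,3)$-arc-transitive this cubed divides $|G_v|$, whereas $|\Out(L)|_r|L_v|_r<2^{8t+1}$, respectively $<3^{9t/2}$ — a contradiction in both cases.

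Hence $\varphi_1(L_{uv}\cap M)$ is almost simple with socle $\PSL_m(q)$. As in Lemma~\ref{lem18}, $\PSL_m(q)$ is then the unique insoluble composition factor of $L_{uv}\cap M$, of multiplicity $\ell\mid t$; more precisely, for any prime $r\nmid|\Out(\PSL_m(q))|$ every $\varphi_i$ maps a Sylow $r$-subgroup of $L_{uv}\cap M$ into the socle $\PSL_m(q)_i$, so the normal subgroup $D:=L_{uv}\cap N$ of $L_{uv}$ has each $\varphi_i(D)\in\{1,\PSL_m(q)_i\}$, and transitivity of $\overline{L_{uv}}$ forces $D\cong\PSL_m(q)^\ell$ to contain a full Sylow $r$-subgroup of $L_{uv}\cap M$, so $|L_{uv}\cap M|_r=|\PSL_m(q)|_r^{\ell}$. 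Now choose an odd prime $r$ dividing $|\PSL_m(q)|$ but not $q-1$ with $r>f$: such $r$ exists for $m\geqslant3$ (take $r\in\ppd(p,mf)$, which is nonempty except for $(m,q)\in\{(4,2),(6,2),(3,4)\}$, the first two already excluded and $(m,q)=(3,4)$ admitting $r=7$), and then $r\nmid|\Out(L)|$. Assuming $\ell<t$, so $\ell\leqslant t/2$ and $|L_{uv}\cap M|_r\leqslant|\PSL_m(q)|_r^{t/2}$, the $r$-part of $|L_v|/|L_{uv}|$ is at least $|\PSL_m(q)|_r^{t/2}$; by $(G,3)$-arc-transitivity $|\PSL_m(q)|_r^{3t/2}$ divides $|G_v|$, while $|G_v|_r\leqslant|\Out(L)|_r|L_v|_r=|\PSL_m(q)|_r^t(t!)_r<|\PSL_m(q)|_r^t\,r^{t/2}$ by Lemma~\ref{Factorial}, forcing $|\PSL_m(q)|_r<r$, which is absurd. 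Therefore $\ell=t$, so $D\cong\PSL_m(q)^t$ has the same order as $N$ and $D\leqslant N$; that is, $N\leqslant L_{uv}$.

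Finally we run the endgame. Because $N\lhd L_v$ and $N\leqslant L_{uv}$, every normal subgroup of $L_{uv}$ contained in $N$ is a subproduct of the $t$ simple direct factors of $N$, so transitivity of $\overline{L_{uv}}$ makes $N$ a minimal normal subgroup of $L_{uv}$. As $g$ maps the arc $(u,v)$ to $(v,w)$ we have $L_{uv}^g=L_{vw}$, so $N^g$ is a minimal normal subgroup of $L_{vw}$ isomorphic to $\PSL_m(q)^t$. Since $M\lhd L_v\geqslant L_{vw}$, the subgroup $N^g\cap M$ is normal in $L_{vw}$, hence equals $1$ or $N^g$; if $N^g\cap M=1$ then $N^g$ embeds in $L_{vw}/(M\cap L_{vw})=\overline{L_{vw}}\leqslant\Sy_t$, forcing $|\PSL_m(q)|^t$ to divide $t!$, which is false because $(t!)_{p_0}<p_0^{t/(p_0-1)}\leqslant p_0^t\leqslant(|\PSL_m(q)|^t)_{p_0}$ for any prime $p_0\mid|\PSL_m(q)|$ (Lemma~\ref{Factorial}). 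Hence $N^g\leqslant M\leqslant\PGL_m(q)^t$, and being perfect $N^g\leqslant(\PGL_m(q)^t)^{(\infty)}=\PSL_m(q)^t=N$; comparing orders, $N^g=N$. Thus $g$ normalizes the nontrivial normal subgroup $N$ of $L_v$, contradicting Lemma~\ref{lem26} and completing the proof. I expect the genuine difficulty to lie in the second paragraph: pinning down precisely which configurations of \cite[Theorem~4]{LPS2000} survive the full prime condition when $m\geqslant3$, and then disposing of the two stubborn exceptions $(m,q)\in\{(4,2),(6,2)\}$ uniformly over all $t\geqslant2$; the bookkeeping of composition-factor multiplicities and the choice of an auxiliary prime avoiding $|\Out(L)|$ are the other fiddly points.
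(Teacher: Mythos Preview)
Your proof is correct and follows essentially the same architecture as the paper's: the setup via \cite[Proposition~4.7.3]{KL1990}, the reduction to a transitive $\overline{L_{uv}}$ via Lemma~\ref{lem1}, the prime-set bound from Lemma~\ref{lem3}, the appeal to \cite{LPS2000} to isolate the two exceptional pairs $(m,q)\in\{(4,2),(6,2)\}$, the arithmetic disposal of those exceptions using $(G,3)$-arc-transitivity, and the multiplicity argument forcing $\ell=t$ all match the paper almost line for line (the paper cites \cite[Corollary~5]{LPS2000} directly rather than filtering \cite[Theorem~4]{LPS2000}, and uses the prime $7$ rather than $3$ for the $(6,2)$ case, but these are cosmetic).

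The one substantive difference is the endgame. Once $N=\PSL_m(q)^t\leqslant L_{uv}$, the paper shows that $N$ is also contained in $L_{vw}$, deduces that $\overline{L_{vw}}$ is transitive, invokes Lemma~\ref{lem4} to get $\A_t\leqslant\overline{L_{uv}},\overline{L_{vw}}$, and then concludes $L_v^{(\infty)}$ is $g$-invariant. Your route is more direct: you use minimality of $N$ in $L_{uv}$ to transport it to a minimal normal subgroup $N^g$ of $L_{vw}$, then force $N^g\leqslant M$ (else it embeds in $\Sy_t$) and hence $N^g\leqslant M^{(\infty)}=N$, giving $N^g=N$ immediately. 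This avoids Lemma~\ref{lem4} altogether and is a genuine simplification; note that $N=E(L_v)$ is characteristic in $L_v$ and hence normal in $G_v$, so Lemma~\ref{lem26} applies as you claim.

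Two tiny slips that do not affect validity: the bound $|\Out(L)|_2|L_v|_2<2^{8t+1}$ for $(m,q)=(4,2)$ should read $2^{7t+1}$ (your inequality $9t>8t+1$ still holds for $t\geqslant2$, so nothing breaks); and $(m,q)=(4,2)$ is \emph{not} a $\ppd$ exception since $\ppd(2,4)=\{5\}\neq\emptyset$ --- the genuine exceptions with $m\geqslant3$ are $(3,4)$ and $(6,2)$ only, both of which you handle correctly.
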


\begin{proof}
Suppose that Hypothesis~\ref{hyp2} holds and $G_v$ is a $\calC_7$-subgroup of $G$. Then by~\cite[Proposition~4.7.3]{KL1990},
\[
L_v\leqslant(M_1\times\cdots\times M_k)\rtimes\Sy_k=\PGL_m(q)\wr\Sy_k
\]
with $M_1\cong\cdots\cong M_k\cong\PGL_m(q)$, where $n=m^k$ with $m\geqslant3$, and
\[
L_v\geqslant(\Soc(M_1)\times\dots\times\Soc(M_k))\rtimes\Sy_k=\PSL_m(q)\wr\Sy_k.
\]
Let $M=L_v\cap(M_1\times\dots\times M_k)$ and for each $i=1,\dots,k$ let $\varphi_i$ be the projection of $M$ to $M_i$. Denote $\overline{L_v}=L_vM/M$, $\overline{L_{uv}}=L_{uv}M/M$ and $\overline{L_{vw}}=L_{vw}M/M$. From the factorization $L_v=L_{uv}L_{vw}$ we deduce that $\overline{L_v}=\overline{L_{uv}}\,\overline{L_{vw}}$. Then by Lemma~\ref{lem1}, at least one of $\overline{L_{uv}}$ or $\overline{L_{vw}}$, say $\overline{L_{uv}}$, is a transitive subgroup of $\Sy_k$. It follows from Lemma~\ref{lem3} that
\[
\varphi_1(L_{uv}\cap M)\cong\cdots\cong\varphi_k(L_{uv}\cap M)
\]
and $\pi(\PSL_m(q))\subseteq\pi(\varphi_1(L_{uv}\cap M))$. Thereby we deduce from~\cite[Corollary~5]{LPS2000} that either $\PSL_m(q)\leqslant\varphi_1(L_{uv}\cap M)\leqslant\PGL_m(q)$, or one of the following holds.
\begin{itemize}
\item[(i)] $m=4$, $q=2$, and $\varphi_1(L_{uv}\cap M)=\A_7$;
\item[(ii)] $m=6$, $q=2$, and $|\PSL_m(q)|/|\varphi_1(L_{uv}\cap M)|$ is divisible by $63$.
\end{itemize}

First assume that~(i) occurs. Then we have $|\varphi_1(L_{uv}\cap M)|_2=2^3$ and hence $|L_{uv}|_2\leqslant|L_{uv}\cap M|_2^k(k!)_2\leqslant2^{3k}(k!)_2$. Moreover, $|L_v|_2=|\PSL_4(2)\wr\Sy_k|_2=2^{6k}(k!)_2$. Thus the valency of $\Gamma$ has $2$-part
\[
\frac{|L_v|_2}{|L_{uv}|_2}\geqslant\frac{2^{6k}(k!)_2}{2^{3k}(k!)_2}=2^{3k}.
\]
Since $\Gamma$ is $(G,3)$-arc-transitive, we conclude that $|G_v|$ is divisible by $2^{9k}$. This together with $|G_v|_2\leqslant|L_v|_2|\Out(L)|_2=2^{6k+1}(k!)_2<2^{7k+1}$ implies $2^{9k}<2^{7k+1}$, which is not possible.

Next assume~(ii) occurs. Then $|\PSL_m(q)|_7/|\varphi_1(L_{uv}\cap M)|_7\geqslant7$, and so the valency of $\Gamma$ has $7$-part
\[
\frac{|L_v|_7}{|L_{uv}|_7}\geqslant\frac{|M|_7}{|L_{uv}\cap M|_7}\geqslant\left(\frac{|\PSL_m(q)|_7}{|\varphi_1(L_{uv}\cap M)|_7}\right)^k\geqslant7^k.
\]
Since $\Gamma$ is $(G,3)$-arc-transitive, we conclude that $|G_v|$ is divisible by $7^{3k}$. However, as $|\Out(L)|_7=1$, we have $|G_v|_7=|L_v|_7=|\PSL_6(2)|_7^k(k!)_7=7^{2k}(k!)_7$. It follows that $7^{2k}(k!)_7\geqslant7^{3k}$, that is, $(k!)_7\geqslant7^k$, which is not possible.

Thus far we have seen that neither case~(i) nor~(ii) is possible. Thus $\PSL_m(q)\leqslant\varphi_1(L_{uv}\cap M)\leqslant\PGL_m(q)$. Write $q=p^f$ with $p$ prime. Then there exists an odd prime $r$ in $\pi(\PSL_m(q))\setminus\pi(q-1)$ such that $r>f$. It follows that $|\varphi_1(L_{uv}\cap M)|_r=|\PSL_m(q)|_r$ and $|\Out(L)|_r=1$. Since $\overline{L_{uv}}$ is transitive and $\varphi_1(L_{uv}\cap M)$ has socle $\PSL_m(q)$, $L_{uv}\cap M$ has a unique insoluble composition factor $\PSL_m(q)$, and it has multiplicity $\ell$ dividing $k$. If $\ell<k$, then $|L_{uv}\cap M|_r\leqslant|\PSL_m(q)|_r^{k/2}$, and so the valency of $\Gamma$ has $r$-part
\[
\frac{|L_v|_r}{|L_{uv}|_r}\geqslant\frac{|\PSL_m(q)|^k(k!)_r}{|\PSL_m(q)|_r^{k/2}(k!)_r}=|\PSL_m(q)|_r^{k/2}.
\]
This together with the $(G,3)$-arc-transitivity of $\Gamma$ implies $|G_v|_r\geqslant|\PSL_m(q)|_r^{3k/2}$, which is not possible since
\[
|G_v|_r\leqslant|\Out(L)|_r|L_v|_r=|\PSL_m(q)|_r^k(k!)_r<|\PSL_m(q)|_r^kr^{k/(r-1)}\leqslant|\PSL_m(q)|_r^kr^{k/2}.
\]
Hence we have $\ell=k$ and so $L_{uv}\geqslant M'\cong\PSL_m(q)^k$. Moreover, since $\overline{L_{uv}}$ is transitive, $M'$ is a minimal normal subgroup of $L_{uv}$. As $L_{vw}\cong L_{uv}$, we conclude that $L_{vw}$ has a minimal normal subgroup $N$ isomorphic to $\PSL_m(q)^k$. Then since $N\cap M$ is normal in $L_{vw}$, either $N\cap M=1$ or $N\leqslant M$. If $N\cap M=1$, then $\PSL_m(q)^k\cong N\cong NM/M\leqslant\Sy_k$, which is not possible. Hence $N\leqslant M$, and it follows that $N=N'\leqslant M'$. This leads to $N=M'$ since $|N|=|M'|$. Therefore, $M'=N$ is a minimal normal subgroup of $L_{vw}$, which implies that $\overline{L_{vw}}$ is transitive. Since $L_{uv}\cong L_{vw}$ and both $L_{uv}$ and $L_{vw}$ contain $M'$, we see that $\overline{L_{uv}}$ and $\overline{L_{vw}}$ have the same insoluble composition factors. Thus we derive from Lemma~\ref{lem4} that both $\overline{L_{uv}}$ and $\overline{L_{uv}}$ contain $\A_k$. Consequently, $L_{uv}^{(\infty)}=L_{vw}^{(\infty)}=L_v^{(\infty)}$, and so
\[
(L_v^{(\infty)})^g=(L_{uv}^{(\infty)})^g=(L_{uv}^g)^{(\infty)}=L_{vw}^{(\infty)}=L_v^{(\infty)},
\]
contradicting Lemma~\ref{lem26}.
\end{proof}

\begin{lemma}\label{lem6}
If Hypothesis~$\ref{hyp2}$ holds with $n=4$ and $q$ odd, then $L_v$ is not the $\calC_8$-subgroup $\PSO_4^+(q).\C_2$ of $L$.
\end{lemma}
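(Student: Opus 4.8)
The plan is to adapt the argument of Lemma~\ref{lem27}, since the $\calC_8$-subgroup $\PSO_4^+(q).\C_2$ realizes the tensor decomposition $\bbF_q^4=\bbF_q^2\otimes\bbF_q^2$ and so plays, for tensor-factor dimension $2$, exactly the role excluded there. By Lemma~\ref{lem7}, $\Gamma$ is $(L,2)$-arc-transitive, so under Hypothesis~\ref{hyp2} we have $L_v=L_{uv}L_{vw}$ with $L_{uv}^g=L_{vw}$ and $\langle L_v,g\rangle=L$. The cases $q=3$ and $q=9$ I would dispose of by a \magma~\cite{magma} computation (for $q=3$ the group $\POm_4^+(3)\cong\PSL_2(3)^2$ is soluble, and for $q=9$ an exceptional embedding $\A_5<\PSL_2(9)$ intervenes below). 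So assume $q\geqslant5$ with $q\ne9$. Then the socle of $L_v$ is $N:=\POm_4^+(q)\cong\PSL_2(q)\times\PSL_2(q)=T_1\times T_2$, which equals $L_v^{(\infty)}$, and $\PSL_2(q)\wr\Sy_2\leqslant L_v\leqslant\PGL_2(q)\wr\Sy_2$; let $M$ denote the stabilizer of each of $T_1,T_2$ (of index $2$ in $L_v$) and $\varphi_1,\varphi_2$ its two coordinate projections.

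By Lemma~\ref{lem1} we may assume that the image $\overline{L_{uv}}$ of $L_{uv}$ in its action on $\{T_1,T_2\}$ is transitive, and then Lemma~\ref{lem3} gives $\pi(\PSL_2(q))\subseteq\pi(\varphi_1(L_{uv}\cap M))$. By Dickson's classification of the subgroups of $\PSL_2(q)$ (see~\cite{BHR2013}), either $\varphi_1(L_{uv}\cap M)$ contains $\PSL_2(q)$, or $q$ is a Mersenne prime and $\varphi_1(L_{uv}\cap M)$ lies in a Borel subgroup. In the former case I would use a primitive-prime-divisor count to force the multiplicity $\ell$ of $\PSL_2(q)$ as a composition factor of $L_{uv}$ to equal $2$: choosing an odd prime $r$ with $r\nmid|\Out(\PSL_4(q))|$ and such that $|\PSL_2(q)|_r$ is the full contribution of a torus (take $r\in\ppd(p,2f)$ when $q$ is not a Mersenne prime, and any odd prime divisor of $q-1$ when it is), one compares $|L_v|_r$ with $|L_{uv}|_r$ and uses that the cube of the valency $|L_v|/|L_{uv}|$ divides $|G_v|$, which is only possible when $\ell=2$.

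Once $\ell=2$, a Goursat-type argument shows $L_{uv}\cap M\supseteq M'=N$, so $L_{uv}\supseteq N=L_v^{(\infty)}$; the same argument, applied to $L_{vw}$ (whose composition factors coincide with those of $L_{uv}$), gives $L_{vw}\supseteq N$ regardless of whether $\overline{L_{vw}}$ is transitive, and in the intransitive case this forces $L_{vw}\in\{N,\PSO_4^+(q)\}$, both impossible since $L_{vw}=N$ contradicts $|L_{uv}|=|L_{vw}|$ (as $\overline{L_{uv}}=\Sy_2$) while $L_{vw}=\PSO_4^+(q)$ makes the valency $2$. Hence $L_{uv}^{(\infty)}=N=L_{vw}^{(\infty)}$, so $(L_v^{(\infty)})^g=(L_{uv}^g)^{(\infty)}=L_{vw}^{(\infty)}=L_v^{(\infty)}$, contradicting Lemma~\ref{lem26}. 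In the remaining (Mersenne, Borel) case, $\varphi_1(L_{uv}\cap M)$ and hence also $\varphi_2(L_{uv}\cap M)$ (as $\overline{L_{uv}}$ is transitive) stabilize a $1$-space, so $L_{uv}$ stabilizes the $1$-space spanned by the corresponding pure tensor, which is singular for the orthogonal form; similarly $L_{vw}$ stabilizes a singular $1$-space. Writing $P_U,P_{U'}$ for the stabilizers in $L_v$ of these two singular points, $L_v=L_{uv}L_{vw}$ gives $L_v=P_UP_{U'}$, so by Lemma~\ref{Factorization} the group $P_{U'}$ acts transitively on the $(q+1)^2>1$ singular points of $\bbF_q^4$, which is impossible since it fixes one. (If $\overline{L_{vw}}$ is intransitive here, one argues instead that $L_{vw}\cong L_{uv}$ has a nontrivial normal $p$-subgroup, which confines $L_{vw}\cap N$ to the $N$-stabilizer of a singular point and so gives $L_{vw}$ a bounded orbit on singular points, again contradicting transitivity.)

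The main obstacle is the analysis of the subgroups of $\PSL_2(q)$ whose order is divisible by every prime dividing $|\PSL_2(q)|$: the Borel subgroups when $q$ is a Mersenne prime, and $\A_5$ when $q=9$. These do not contain $\PSL_2(q)$ and so break the composition-factor bookkeeping; the Mersenne case is recovered through the geometric observation that such a configuration forces the stabilization of a singular point, while $q=9$ (and $q=3$) are settled by direct \magma~\cite{magma} computation.
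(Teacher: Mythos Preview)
Your argument is correct and reaches the same endgame as the paper (the element $g$ normalizes $L_v^{(\infty)}$, contradicting Lemma~\ref{lem26}), but the route differs in two respects worth noting. The paper disposes of $q\in\{3,5,7,9\}$ by \magma and then, for $q\geqslant11$, argues uniformly: if some $\varphi_i(L_{uv}\cap M)$ were proper in $K_i$, one bounds it by a maximal subgroup $H<\PSL_2(q)$ and observes that $(|\PSL_2(q)|/|H|)^3\mid |G_v|$ fails for every such $H$; the diagonal case is eliminated the same way, whence $L_{uv}\cap M=M$ and $L_{uv}'=L_{vw}'=L_v'$. You instead invoke Dickson's list to isolate the only proper subgroups of $\PGL_2(q)$ with full prime spectrum: those containing $\PSL_2(q)$, and (for Mersenne $q$) Borel subgroups. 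This lets you cut the \magma verification down to $q\in\{3,9\}$, at the price of a separate geometric argument for Mersenne primes.

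That geometric argument is sound but deserves one clarification. You assert that $L_{uv}$ itself (not just $L_{uv}\cap M$) fixes a singular point. This holds because $\varphi_i(L_{uv}\cap M)$ contains a nontrivial $p$-element, which has a \emph{unique} fixed point on $\mathbb{P}^1(q)$; hence $L_{uv}\cap M$ has a unique fixed singular point $U$, and $L_{uv}$, normalizing $L_{uv}\cap M$, must fix $U$. With $L_{uv}\leqslant(L_v)_U$ the factorization $L_v=L_{uv}L_{vw}$ forces $L_{vw}$ to be transitive on the $(q+1)^2$ singular points, while $\bfO_p(L_{vw})\cong\bfO_p(L_{uv})\neq1$ has nonempty proper fixed set (the action on singular points is faithful since they span $V$), a contradiction. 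This single $\bfO_p$-argument already covers both the transitive and intransitive $\overline{L_{vw}}$ subcases, so your separate treatment of the intransitive case (and the claim $L_{vw}\in\{N,\PSO_4^+(q)\}$, which overlooks the intermediate subgroups in $M/N\cong\C_2^2$) is unnecessary. Your approach trades a larger computer check for a cleaner structural picture and recycles the Lemma~\ref{lem27} template; the paper's approach is shorter on the page but leans more on \magma.
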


\begin{proof}
Suppose that Hypothesis~\ref{hyp2} holds with $n=4$ and $q$ odd while $L_v=\PSO_4^+(q).\C_2$ is a $\calC_8$-subgroup of $L$. For $q=3$, $5$, $7$ or $9$ computation in \magma~\cite{magma} shows that there is no nontrivial homogeneous factorization of $L_v$ with the two factors conjugate in $L$. Therefore $q\geqslant11$. Let $M=K_1\times K_2$ be the normal subgroup of $L_v$ of index $4$ such that $K_1\cong K_2\cong\PSL_2(q)$, and $\varphi_i$ be the projection of $M$ onto $K_i$ for $i=1,2$. Write $q=p^f$ with $p$ prime. Note that $|G_v|$ divides $|L_v||\Out(L)|=8f\gcd(q-1,4)|\PSL_2(q)|^2$.

Suppose $\varphi_i(L_{uv}\cap M)\neq K_i$ for $i=1$ or $2$. Then there is a maximal subgroup $H$ of $K_i\cong\PSL_2(q)$ containing $\varphi_i(L_{uv}\cap M)$. Since $\Gamma$ is $(G,3)$-arc-transitive and $|L_v|/|L_{uv}|$ is divisible by $|M|/|L_{uv}\cap M|$, we derive that $|G_v|$ is divisible by $(|M|/|L_{uv}\cap M|)^3$. Then since $|M|/|L_{uv}\cap M|$ is divisible by $|\varphi_i(M)|/|\varphi_i(L_{uv}\cap M)|$ and hence by $|\PSL_2(q)|/|H|$, it follows that $|G_v|$ is divisible by $(|\PSL_2(q)|/|H|)^3$. Consequently, $(|\PSL_2(q)|/|H|)^3$ divides $8f\gcd(q-1,4)|\PSL_2(q)|^2$. However, checking for all the possible maximal subgroups (see for example~\cite{Huppert1967}) $H$ of $\PSL_2(q)$ we see that this condition is not satisfied as $q\geqslant11$, a contradiction.

Now we have $\varphi_i(L_{uv}\cap M)=K_i$ for $i=1,2$, which implies that either $L_{uv}\cap M=M$ or $L_{uv}\cap M\cong\PSL_2(q)$. For the latter, $|L_v|/|L_{uv}|$ is divisible by $|M|/|L_{uv}\cap M|=|\PSL_2(q)|$, and so the $(G,3)$-arc-transitivity of $\Gamma$ implies that $|\PSL_2(q)|^3$ divides $8f\gcd(q-1,4)|\PSL_2(q)|^2$, which is not possible. Thus $L_{uv}\cap M=M$, from which we conclude $L_{uv}'=M=L_v'$. For the same reason, $L_{vw}'=M=L_v'$. Consequently,
\[
(L_v')^g=(L_{uv}')^g=(L_{uv}^g)'=L_{vw}'=L_v',
\]
contradicting Lemma~\ref{lem26}.
\end{proof}

\begin{lemma}\label{lem28}
If Hypothesis~$\ref{hyp2}$ holds then $G_v$ is not a $\calC_8$-subgroup of $G$.
\end{lemma}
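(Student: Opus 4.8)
The plan is to follow the same strategy used throughout Sections~\ref{sec1}--\ref{sec2} for the other Aschbacher classes: exploit the homogeneous factorization $L_v=L_{uv}L_{vw}$ together with Lemma~\ref{lem26}, which forbids any nontrivial characteristic (hence $g$-invariant) substructure of $L_v$. First I would invoke \cite[Proposition~4.8.3--4.8.6]{KL1990} to enumerate the $\calC_8$-subgroups of $L=\PSL_n(q)$: up to the outer diagonal/field/graph twists, $L_v^{(\infty)}$ is one of $\PSp_n(q)$ (with $n$ even), $\POm_n^\varepsilon(q)$ (with $q$ odd), or $\PSU_n(q_0)$ (with $q=q_0^2$). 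In every case $G_v/\Rad(G_v)$ is an almost simple group whose socle $S$ is the corresponding simple classical group, and $\pi(\Rad(G_v))$ is contained in a small set of ``small'' primes — roughly $\pi(q-1)\cup\pi(\Out(S))$ or $\pi(q_0(q_0-1))$-type sets depending on the subcase.

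Next I would pass to the quotient. Writing $\overline H=H\Rad(G_v)/\Rad(G_v)$, the factorization gives $\overline{G_v}=\overline{G_{uv}}\,\overline{G_{vw}}$, and since $\pi(G_{uv})=\pi(G_{vw})=\pi(G_v)$ by the remark following Hypothesis~\ref{hyp2}, both $\pi(\overline{G_{uv}})$ and $\pi(\overline{G_{vw}})$ contain $\pi(\overline{G_v})$ minus the small prime set. I would then apply the appropriate factorization lemma: Lemma~\ref{lem19} when $S=\PSU_n(q_0)$ (noting $\PSU_n(q_0)$ is $\PSL$-type in the sense needed, or more precisely using \cite[Theorem~A]{LPS1990} via the tables), Lemma~\ref{lem24} when $S=\PSp_{2m}(p)$, and the orthogonal analogue extracted from \cite[Tables~1--3]{LPS1990} when $S=\POm_n^\varepsilon(q)$. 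Row~4 of Table~\ref{tab1} ($\POm_8^+(q)$ with $\Omega_7(q)$) and Row~3 ($\Sp_4(2^f)$ with $\Sp_2(4^f)$) flag exactly the sporadic exceptions that must be handled: for these I would show directly, using the $(G,3)$-arc-transitivity (so $(|L_v|/|L_{uv}|)^3$ divides $|G_v|$) and an order/$p$-part estimate of the valency as in Lemmas~\ref{lem18} and~\ref{lem27}, that no such digraph exists; the triality case $\POm_8^+(q)$ is delicate because $\Out(\POm_8^+(q))$ is large, so one must be careful about which outer automorphisms can occur given $L=\PSL_8(q)$. The small-rank cases ($n\le 4$, e.g. $\PSO_4^+(q).\C_2$ already dealt with in Lemma~\ref{lem6}, and $\Sp_4(q)$, $\PSU_3(q_0)$, $\PSU_4(q_0)$, $\POm_7(q)$ in $\PSL_7$-type situations) I would dispose of either by \magma{} computation for the finitely many genuinely small $q$, or by the same valency estimates.

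In the generic situation the factorization lemma forces at least one of $\overline{G_{uv}}$, $\overline{G_{vw}}$ to contain $S=\Soc(\overline{G_v})$; since $G_{uv}\cong G_{vw}$ have the same insoluble composition factors, the other does too, so both $G_{uv}$ and $G_{vw}$ contain $G_v^{(\infty)}$, whence $G_{uv}^{(\infty)}=G_{vw}^{(\infty)}=G_v^{(\infty)}$. Then
\[
(G_v^{(\infty)})^g=(G_{uv}^{(\infty)})^g=(G_{uv}^g)^{(\infty)}=G_{vw}^{(\infty)}=G_v^{(\infty)},
\]
and since $G_v^{(\infty)}$ is a nontrivial normal subgroup of $G_v$, this contradicts Lemma~\ref{lem26}, completing the proof. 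I expect the main obstacle to be two-fold: first, extracting a clean statement of the relevant orthogonal-group factorizations from \cite[Tables~1--3]{LPS1990} in the form of a usable lemma (the orthogonal case has no pre-packaged analogue of Lemmas~\ref{lem20}--\ref{lem24} in the excerpt, so some care is needed to isolate exactly which $(S,\overline{G_{uv}}\cap S,\overline{G_{vw}}\cap S)$ can arise and to check the index-square divisibility condition coming from homogeneity); and second, the $\POm_8^+(q)$ triality subcase, where the abundance of outer automorphisms means the reduction ``$L=\PSL_n(q)$ forces $G/L$ small'' must be argued from \cite[Table~3.5.A]{KL1990} and \cite{BHR2013}, and where Row~4 of Table~\ref{tab1} shows a genuine almost-simple homogeneous factorization exists at the group level, so the digraph must instead be excluded by the arc-transitivity/valency argument combined with Lemma~\ref{lem26} applied to $G_v^{(\infty)}$ or to the characteristic subgroup $\Omega_7(q)$-normalizer inside it.
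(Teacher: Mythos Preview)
Your plan follows the $\calC_3$--$\calC_6$ template (pass to $G_v/\Rad(G_v)$, invoke a factorization lemma tailored to the socle type, then finish with $G_v^{(\infty)}$ and Lemma~\ref{lem26}), but this is unnecessarily circuitous here and creates genuine gaps. For $\calC_8$-subgroups $\Rad(G_v)$ is essentially trivial: $L_v$ itself is already almost simple (except for the $\PSO_4^+(q)$ subcase, handled separately by Lemma~\ref{lem6}, and the tiny soluble cases $n=3$, $q\in\{3,9\}$). The paper therefore bypasses the factorization-lemma machinery entirely and applies Proposition~\ref{Homogeneous} \emph{directly} to the homogeneous factorization $L_v=L_{uv}L_{vw}$. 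This immediately yields either $\Soc(L_{uv})=\Soc(L_{vw})=\Soc(L_v)$ (whence Lemma~\ref{lem26} finishes at once, exactly as in your generic endgame) or one of the pairs in Table~\ref{tab1}, which pins down $(\Soc(L_v),\Soc(L_{uv}))$ to $(\Sp_4(2^f),\Sp_2(4^f))$ or $(\POm_8^+(q),\Omega_7(q))$ with no further case analysis needed.

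Your route, by contrast, requires analogues of Lemmas~\ref{lem19} and~\ref{lem24} for $\PSU_n(q_0)$, for $\PSp_n(q)$ over non-prime fields, and for all orthogonal socles---none of which are stated in the paper, and which you yourself flag as the main obstacle. This is real extra work that Proposition~\ref{Homogeneous} renders unnecessary. Moreover, your proposed elimination of the two Table~\ref{tab1} exceptions via the valency-cube divisibility $(|L_v|/|L_{uv}|)^3\mid |G_v|$ is not the argument the paper uses and is not obviously sufficient on its own: the paper instead exploits the \emph{second} homogeneous factorization $G_{vw}=G_{uvw}G_{uvw}^g$ coming from $(G,3)$-arc-transitivity, computes the valency $|G_{vw}|/|G_{uvw}|$ explicitly (e.g.\ $q^2(q^2-1)/a$ in the symplectic case, $q^3(q^4-1)/(2a)$ in the $\POm_8^+$ case), and observes that this index destroys the equality $\pi(G_{uvw})=\pi(G_{vw})$ required by Lemma~\ref{lem2}. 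This prime-set argument is both shorter and sharper than a $p$-part estimate; your sketch does not mention it, and the triality worries you raise about $\POm_8^+(q)$ simply do not arise, since the argument operates entirely inside $L_v$ and $G_{vw}$.
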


\begin{proof}
Suppose that Hypothesis~\ref{hyp2} holds and $G_v$ is a $\calC_8$-subgroup of $G$. Then by~\cite[Propositions~4.8.3--4.8.5]{KL1990}, one of the following holds:
\begin{itemize}
\item[(i)] $n\geqslant4$ is even, and $L_v=\PSp_n(q).\C_{\gcd(q-1,2)\gcd(q-1,n/2)/\gcd(q-1,n)}$;
\item[(ii)] $n\geqslant3$, $q$ is odd, and $L_v=\PSO_n^\varepsilon(q).\C_{\gcd(n,2)}$ with $\varepsilon\in\{0,\pm\}$;
\item[(iii)] $n\geqslant3$, $q$ is a square, and $\Soc(L_v)=\PSU_n(q^{1/2})$.
\end{itemize}

First assume that~(i) occurs. Then by Proposition~\ref{Homogeneous}, we deduce from the homogeneous factorization $L_v=L_{uv}L_{vw}$ and the condition $|L_v|/|L_{uv}|\geqslant3$ that $n=4$, $q$ is even and $L_{uv}\cong\SL_2(q^2).\C_a$ with $a\in\{1,2\}$. It follows that $\SL_2(q^2)\leqslant G_{vw}\leqslant\GaL_2(q^2)\times\C_2$ and
\[
\frac{|G_{vw}|}{|G_{uvw}|}=\frac{|L_v|}{|L_{uv}|}=\frac{|\Sp_4(q)|}{|\SL_2(q^2).\C_a|}=\frac{q^2(q^2-1)}{a}.
\]
This implies that $\pi(G_{uvw})\neq\pi(G_{vw})$, a contradiction.

Next assume that~(ii) occurs. Then by Proposition~\ref{Homogeneous}, we deduce from the homogeneous factorization $L_v=L_{uv}L_{vw}$ that either $\Soc(L_{uv})=\Soc(L_{vw})=\Soc(L_v)$, or $(n,q,L_v)$ lies in Table~\ref{tab2} below. If $\Soc(L_{uv})=\Soc(L_{vw})=\Soc(L_v)$ is nonabelian simple, then
\[
(\Soc(L_v))^g=(\Soc(L_{uv}))^g=\Soc(L_{uv}^g)=\Soc(L_{vw})=\Soc(L_v),
\]
contradicting Lemma~\ref{lem26}. Now we analyze the candidates in Table~\ref{tab2}. For row~1 and row~2 of Table~\ref{tab2}, $L_v$ does not have a homogeneous factorization $L_v=L_{uv}L_{vw}$ with $|L_v|/|L_{uv}|\geqslant3$, a contradiction. For row~3 of Table~\ref{tab2}, $\Gamma$ has valency $|L_v|/|L_{uv}|=6$ since $L_v=L_{uv}L_{vw}$ is a homogeneous factorization with $|L_v|/|L_{uv}|\geqslant3$, but $|G_v|$ is not divisible by $6^3$, contrary to the $(G,3)$-arc-transitivity of $\Gamma$. By Lemma~\ref{lem6} we know that row~4 of Table~\ref{tab2} is not possible. For row~5 of Table~\ref{tab2}, we have $|L_{uv}|=a|\Omega_7(q)|$ with $a$ dividing $4$ and so
\[
\frac{|G_{vw}|}{|G_{uvw}|}=\frac{|L_v|}{|L_{uv}|}=\frac{|\PSO_4^+(q).\C_2|}{a|\Omega_7(q)|}=\frac{q^3(q^4-1)}{2a},
\]
which implies that $\pi(G_{uvw})\neq\pi(G_{vw})$, a contradiction.

\begin{table}[!h]
\caption{The triple $(n,q,L_v)$ in the proof of Lemma~\ref{lem28}}\label{tab2}
\centering
\begin{tabular}{|l|l|l|l|}
\hline
row & $n$ & $q$ & $L_v$ \\
\hline
1 & $3$ & $3$ & $\Sy_4$ \\
2 & $3$ & $9$ & $\PGL_2(9)$ \\
3 & $4$ & $3$ & $\Sy_6$ \\
4 & $4$ & odd & $\PSO_4^+(q).\C_2$ \\
5 & $8$ & odd & $\PSO_8^+(q).\C_2$ \\
\hline
\end{tabular}
\end{table}

Finally assume that~(iii) occurs. Then by Proposition~\ref{Homogeneous}, we deduce from the homogeneous factorization $L_v=L_{uv}L_{vw}$ that $\Soc(L_{uv})=\Soc(L_{vw})=\Soc(L_v)$, so
\[
(\Soc(L_v))^g=(\Soc(L_{uv}))^g=\Soc(L_{uv}^g)=\Soc(L_{vw})=\Soc(L_v),
\]
contradicting Lemma~\ref{lem26}. The proof is thus completed.
\end{proof}

We conclude this section with the following:

\begin{theorem}\label{thm4}
Let $G$ be an almost simple group with socle $\PSL_n(q)$. Then there is no $G$-vertex-primitive $(G,3)$-arc-transitive digraph $\Gamma$ such that $G_v$ is a maximal subgroup of $G$ from classes $\calC_7$ and $\calC_8$ for any vertex $v$ of $\Gamma$.
\end{theorem}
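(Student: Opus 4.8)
The plan is to argue by contradiction and reduce at once to the setting of Hypothesis~\ref{hyp2}, after which the two preceding lemmas finish the job. So suppose $\Gamma$ is a $G$-vertex-primitive $(G,3)$-arc-transitive digraph for which $G_v$ is a maximal subgroup of $G$ in class $\calC_7$ or $\calC_8$, where $v$ is a vertex of $\Gamma$. The automorphism group of a directed cycle of prime length is cyclic of that prime order, hence abelian and in particular not almost simple; therefore $\Gamma$ is not a directed cycle of prime length, and Lemma~\ref{lem5} forces $\Gamma$ to have valency at least $3$. Consequently all the requirements of Hypothesis~\ref{hyp2} are met.

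I would then split into the two Aschbacher classes. If $G_v$ is a $\calC_7$-subgroup of $G$, then Lemma~\ref{lem27} applies under Hypothesis~\ref{hyp2} and yields a contradiction. If $G_v$ is a $\calC_8$-subgroup of $G$, then Lemma~\ref{lem28} applies under Hypothesis~\ref{hyp2} and yields a contradiction. Either way we obtain the contradiction that proves the theorem.

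The real content of the argument lives entirely in Lemmas~\ref{lem27} and~\ref{lem28}, so the only point demanding care in the assembly is the verification that $\Gamma$ has valency at least $3$, which is what licenses the appeal to Hypothesis~\ref{hyp2}; this is the step I would check most carefully, although it is routine given Lemma~\ref{lem5}. For orientation I recall the shape of the two inputs: the $\calC_7$ case exploits the wreath structure $\PSL_m(q)\wr\Sy_k\leqslant L_v\leqslant\PGL_m(q)\wr\Sy_k$, uses Lemma~\ref{lem3} to control the projections of $L_{uv}\cap M$, then~\cite[Corollary~5]{LPS2000} to pin those projections down up to a few small exceptions, and closes with a composition-factor count together with Lemmas~\ref{lem4} and~\ref{lem26}; the $\calC_8$ case invokes Proposition~\ref{Homogeneous} to restrict the homogeneous factorization $L_v=L_{uv}L_{vw}$, disposes of the residual small cases (including the delicate $\PSO_4^+(q).\C_2$ entry handled separately in Lemma~\ref{lem6}) by valency and $\pi$-divisibility estimates, and again closes with Lemma~\ref{lem26}. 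With these ingredients in hand, Theorem~\ref{thm4} follows immediately from the reduction above.
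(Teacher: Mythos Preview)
Your proposal is correct and follows essentially the same approach as the paper: assume such a $\Gamma$ exists, invoke Lemma~\ref{lem5} to get valency at least $3$ so that Hypothesis~\ref{hyp2} holds, and then apply Lemmas~\ref{lem27} and~\ref{lem28} to derive a contradiction. Your explicit exclusion of the directed-cycle case (via the observation that its automorphism group is cyclic and hence not almost simple) merely spells out a step the paper leaves implicit when citing Lemma~\ref{lem5}.
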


\begin{proof}
Suppose that $\Gamma$ is a $G$-vertex-primitive $(G,3)$-arc-transitive digraph such that $G_v$ is a maximal subgroup of $G$ from classes $\calC_7$ and $\calC_8$, where $v$ is a vertex of $\Gamma$. Then by Lemma~\ref{lem5}, $\Gamma$ has valency at least $3$. Hence Hypothesis~\ref{hyp2} holds. According to Lemmas~\ref{lem27} and~\ref{lem28}, $G_v$ cannot be a $\calC_i$-subgroup of $G$ for $i\in\{7,8\}$, a contradiction.
\end{proof}

We are now ready to prove the main theorem of the paper.

\begin{proof}[Proof of Theorem~$\ref{thm1}$]
Let $\Gamma$ be a $G$-vertex-primitive $(G,s)$-arc-transitive digraph with $s\geqslant3$, where $G$ is almost simple with socle $L=\PSL_n(q)$ and $q=p^f$ for some prime $p$, and let $v$ be a vertex of $\Gamma$. Then by Theorems~\ref{thm2},~\ref{thm3} and~\ref{thm4}, $G_v$ cannot be a $\calC_i$-subgroup of $G$ for $1\leqslant i\leqslant8$. If $G_v$ is a $\calC_9$-subgroup of $G$, then $G_v$ would be an almost simple group, which is not possible by Corollary~\ref{AS}. Thus we have $s\leqslant2$, as Theorem~$\ref{thm1}$ asserts.
\end{proof}

\vskip0.1in
\noindent\textsc{Acknowledgements.}~ This research was supported by Australian Research Council grant DP150101066. The third author's work on this paper was done when he was a research associate at the University of Western Australia. The second author acknowledges the support of NNSFC grant 11771200. The authors would like to thank the anonymous referee for careful reading and valuable suggestions.

\end{document}